\documentclass[12pt]{amsart}
\usepackage{amssymb}
\usepackage{tikz}
\usepackage[pdfencoding=auto,psdextra]{hyperref}
\usepackage{textgreek}

\evensidemargin 0in
\oddsidemargin \evensidemargin
\textwidth 6.5in

\tolerance 5000

%%% Override page size to fit to-do notes

\evensidemargin .5in
\oddsidemargin \evensidemargin
\paperwidth 9.5in

\definecolor{amethyst}{rgb}{0.5, 0.3, 0.7}
\definecolor{kellygreen}{rgb}{0.1, 0.55, 0.05}
\definecolor{darkorange}{rgb}{0.75, 0.2, 0.0}
\definecolor{teal}{rgb}{0, 0.45, 0.45}

%%%%%%%%%%%%%%%%%%%%%%%%%%%%%%%%%%%%%%%%%%%%%%%%%%%%%%%%%%%%%%%%%%%%%%%%
%                                                                      %
   \title{A Shuffle Theorem for Paths Under Any Line}

   \author{J. Blasiak}
   \author{M. Haiman}
   \author{J. Morse}
   \author{A. Pun}
   \author{G. H. Seelinger}

    \address[Blasiak]{
    Dept.\ of Mathematics\\
    Drexel University\\
    Philadelphia, PA}
    \email{jblasiak@gmail.com}

   \address[Haiman]{Dept.\ of Mathematics\\
            University of California\\
            Berkeley, CA}
   \email{mhaiman@math.berkeley.edu}

   \address[Morse]{
   Dept.\ of Mathematics\\
            University of Virginia\\
            Charlottesville, VA}
   \email{morsej@virginia.edu}

   \address[Pun] {Dept.\ of Mathematics\\
            University of Virginia\\
            Charlottesville, VA}
   \email{ayp6e@virginia.edu}

   \address[Seelinger]{Dept.\ of Mathematics\\
   University of Virginia\\
   Charlottesville, VA}
   \email{ghs9ae@virginia.edu}
   
   \thanks{Authors were supported by NSF Grants DMS-1855784 (J.~B.)
    and DMS-1855804 (J.~M. and G.~S.).}

%   \date{\today}
%                                                                      %
%%%%%%%%%%%%%%%%%%%%%%%%%%%%%%%%%%%%%%%%%%%%%%%%%%%%%%%%%%%%%%%%%%%%%%%%

%%%%%%%%%%%%%%%%%%
%  Environments  %
%%%%%%%%%%%%%%%%%%

\newtheorem{thm}{Theorem}[subsection]
\newtheorem{lemma}[thm]{Lemma}
\newtheorem{prop}[thm]{Proposition}
\newtheorem{cor}[thm]{Corollary}
\newtheorem{conj}[thm]{Conjecture}

\theoremstyle{definition}
\newtheorem{defn}[thm]{Definition}

\theoremstyle{remark}
\newtheorem{example}[thm]{Example}

\newtheorem{remark}[thm]{Remark}

%%%%%%%%%%%%%%
%  Commands  %
%%%%%%%%%%%%%%

\newcommand{\NN}{{\mathbb N}}
\newcommand{\QQ}{{\mathbb Q}}

\newcommand{\RR}{{\mathbb R}}
\newcommand{\ZZ}{{\mathbb Z}}

\newcommand{\kk}{{\mathbf k}}
\newcommand{\aA}{{\mathbf a}}
\newcommand{\bb}{{\mathbf b}}

\newcommand{\Hbold}{{\mathbf H}}
\newcommand{\Sbold}{{\mathbf S}}
\newcommand{\sigmabold}{{\boldsymbol \sigma }}

\newcommand{\Acal}{{\mathcal A}}
\newcommand{\Ecal}{{\mathcal E}}

\newcommand{\Gcal}{{\mathcal G}}
\newcommand{\Hcat}{{\mathcal H}}  % the `catalanimal' symbol
\newcommand{\Lcal}{{\mathcal L}}

\newcommand{\ctild}{\tilde{c}}
\newcommand{\Htild}{\tilde{H}}

\DeclareMathOperator{\conv}{conv}

\DeclareMathOperator{\dinv}{dinv}
\DeclareMathOperator{\inv}{inv}
\DeclareMathOperator{\pol}{pol}
\DeclareMathOperator{\supp}{supp}

\DeclareMathOperator{\Inv}{Inv}
\DeclareMathOperator{\GL}{GL}

\DeclareMathOperator{\SSYT}{SSYT}

\DeclareMathOperator{\Stab}{Stab}

\newcommand{\chek}{^{\vee }}

%  =_{def}
\newcommand{\defeq}{\overset{\text{{\em def}}}{=}}

%  fractional part of a real number - curly braces are 'standard' but
%  could be confused with set braces
\newcommand{\fp}[1]{{\boldsymbol \{} #1 {\boldsymbol \}}}

\begin{document}

\subjclass[2010]{Primary: 05E05; Secondary: 16T30}

\begin{abstract}
We generalize the shuffle theorem and its $(km,kn)$ version, as
conjectured by Haglund et al.\ and Bergeron et al., and proven by
Carlsson and Mellit, and Mellit, respectively. In our version
the $(km,kn)$ Dyck paths on the combinatorial side are replaced by
lattice paths lying under a line segment whose $x$ and $y$ intercepts
need not be integers, and the algebraic side is given either by a
Schiffmann algebra operator formula or an equivalent explicit raising
operator formula.

We derive our combinatorial identity as the polynomial truncation of
an identity of infinite series of $\GL_{l}$ characters, expressed in
terms of infinite series versions of LLT polynomials.  The series
identity in question follows from a Cauchy identity for non-symmetric
Hall-Littlewood polynomials.
\end{abstract}

\maketitle

\section{Introduction}
\label{s:intro}

\subsection{Overview} \label{ss:intro} The {\em shuffle theorem},
conjectured by Haglund et al.\ \cite{HaHaLoReUl05} and proven by
Carlsson and Mellit \cite{CarlMell18}, is a combinatorial formula for
the symmetric polynomial $\nabla e_{k}$ as a sum over LLT polynomials
indexed by Dyck paths---that is, lattice paths from $(0,k)$ to $(k,0)$
that lie weakly below the line segment connecting these two points.
Here $e_{k}$ is the $k$-th elementary symmetric function, and $\nabla
$ is an operator on symmetric functions with coefficients in $\QQ
(q,t)$ that arises in the theory of Macdonald polynomials
\cite{BeGaHaTe99}.

The polynomial $\nabla e_{k}$ is significant because it describes the
character of the ring $R_{k}$ of diagonal coinvariants for the
symmetric group $S_{k}$ \cite[Proposition 3.5]{Haiman02}.  The
character of $R_{k}$ had been conjectured in the early 1990s to have
surprising connections with the enumeration of combinatorial objects
such as trees and Dyck paths---for instance, its dimension is equal to
the number $(k+1)^{k-1}$ of rooted trees on $k+1$ labelled vertices,
and the multiplicity of the sign character is equal to the Catalan
number $C_{k}$.  A summary of these early conjectures, contributed by
a number of people, can be found in \cite{Haiman94}.  More precisely,
$\nabla e_{k}$ describes the character of $R_{k}$ as a doubly graded
$S_{k}$ module.  The double grading in $R_{k}$ thus gives rise to
$(q,t)$-analogs of the numbers that enumerate the associated
combinatorial objects.  The conjectures connect specializations of
these $(q,t)$-analogs with previously known $q$-analogs in
combinatorics.

Using results in \cite{GarsHaim96}, the whole suite of earlier
combinatorial conjectures follows from the character formula $\nabla
e_{k}$ and the shuffle theorem.

Along with the formula for $\nabla e_{k}$ given by the shuffle
theorem, Haglund et al.\ conjectured a combinatorial formula for
$\nabla ^{m} e_{k}$ as a sum over LLT polynomials indexed by lattice
paths below the line segment from $(0,k)$ to $(km,0)$.  Extending
this, Bergeron et al.\ \cite{BeGaSeXi16} conjectured, and Mellit
\cite{Mellit16} proved, an identity giving the symmetric polynomial
$e_{k}[-M X^{m,n}]\cdot 1$ as a sum over LLT polynomials indexed by
lattice paths below the segment from $(0,kn)$ to $(km,0)$, for any
pair of positive integers expressed in the form $(km,kn)$ with $m,n$
coprime.  Here we have written $e_{k}[-M X^{m,n}]$ for the operator on
symmetric functions given by a certain element of the elliptic Hall
algebra $\Ecal $ of Burban and Schiffmann \cite{BurbSchi12}, such that
for $n=1$ the symmetric polynomial $e_{k}[-M X^{m,n}]\cdot 1$ reduces
to $\nabla ^{m} e_{k}$.  This notation is explained in \S
\ref{s:schiffmann}.

In this paper we prove an even more general version of the shuffle
theorem, involving a sum over LLT polynomials indexed by lattice paths
lying under the line segment between arbitrary points $(0,s)$ and
$(r,0)$ on the positive real axes, and reducing to the shuffle theorem
of Bergeron et al.\ and Mellit when $(r,s) = (km,kn)$ are integers.

Our generalized shuffle theorem (Theorem \ref{thm:main-G}) is an
identity
\begin{equation}\label{e:main-G-pre}
D_{\bb }\cdot 1 = \sum _{\lambda } t^{a(\lambda )}\, q^{\dinv
_{p}(\lambda )}\, \omega (\Gcal _{\nu (\lambda )}(X; q^{-1})),
\end{equation}
whose ingredients we now summarize briefly, deferring full details to
later parts of the paper.

The sum on the right hand side of \eqref{e:main-G-pre} is over lattice
paths $\lambda $ from $(0,\lfloor s \rfloor)$ to $(\lfloor r \rfloor,
0)$ that lie below the line segment from $(0,s)$ to $(r,0)$.  The
quantity $a(\lambda )$ is the number of lattice squares enclosed
between the path $\lambda $ and the highest such path $\delta $.  We
set $p = s/r$ and define $\dinv _{p}(\lambda )$ to be the number
of `$p$-balanced' hooks in the (French style) Young diagram enclosed
by $\lambda $ and the $x$ and $y$ axes.  A hook is $p$-balanced if a
line of slope $-p$ passes through the segments at the top of its leg
and the end of its arm (Definition \ref{def:dinv} and Figure
\ref{fig:balanced-hook}).

In the remaining factor $\omega (\Gcal _{\nu (\lambda )}(X;q^{-1}))$,
the function $\Gcal _{\nu }(X;q)$ is an `attacking inversions' LLT
polynomial (Definition \ref{def:G-nu}),
and $\omega (h_{\mu }) = e_{\mu }$ is the standard involution on
symmetric functions.  The index $\nu (\lambda )$ is a tuple of one-row
skew Young diagrams of the same lengths as runs of contiguous south
steps in $\lambda $, arranged so that the reading order on boxes of
$\nu (\lambda )$ corresponds to the ordering on south steps in
$\lambda $ given by decreasing values of $y+px$ at the upper endpoint
of each step.

The operator $D_{\bb } = D_{b_{1},\ldots,b_{l}}$ on the left hand side
of \eqref{e:main-G-pre} is one of a family of special elements defined
by Negut \cite{Negut14} in the Schiffmann algebra $\Ecal $.  Letting
$\delta $ again denote the highest path under the given line segment,
the index $\bb $ is defined by taking $b_{i}$ to be the number of
south steps in $\delta $ on the line $x = i-1$.

To recover the previously known cases of the theorem, we take $s = kn$
and $r$ slightly larger than $km$, so $p = n/m-\epsilon $ for a small
$\epsilon >0$.  The segment from $(0,s)$ to $(r,0)$ has the same
lattice paths below it as the segment from $(0,kn)$ to $(km,0)$, and
$\dinv _{p}(\lambda )$ reduces to the version of $\dinv (\lambda )$ in
the original conjectures.  The element $D_{\bb }$ associated to the
highest path $\delta $ below the segment from $(0,kn)$ to $(km,0)$ is
equal to $e_{k}[-M X^{m,n}]$.  Hence, \eqref{e:main-G-pre} reduces to
the $(km,kn)$ shuffle theorem.

\subsection{Preview of the proof}
\label{ss:proof-sketch}

We prove our generalized shuffle theorem by a remarkably simple
method, which we now outline to help orient the reader in following
the details.
In \S \ref{s:schiffmann} we will see that the left hand side of
\eqref{e:main-G-pre}, after applying $\omega $ and evaluating in $l =
\lfloor r \rfloor + 1$ variables $x_{1},\ldots,x_{l}$, becomes the
polynomial part
\begin{equation}\label{e:Db-Hb-pre}
\omega (D_{\bb } \cdot 1)(x_{1},\ldots,x_{l}) = \Hcat _{\bb }(x)_{\pol
}
\end{equation}
of an explicit infinite series of $\GL _{l}$ characters
\begin{equation}\label{e:Hb-pre}
\Hcat _{\bb }(x) =\sum _{w\in S_{l}} w \left(
\frac{x_{1}^{b_{1}}\cdots x_{l}^{b_{l}}\, \prod _{i+1<j}(1 - q\, t\,
x_{i}/x_{j})}{\prod _{i<j}\bigl((1-x_{j}/x_{i})(1 - q\, x_{i}/x_{j})(1
- t\, x_{i}/x_{j}) \bigr)} \right).
\end{equation}
When $\nu $ is a tuple of one-row skew shapes $(\beta _{i})/(\alpha
_{i})$, the LLT polynomial $\Gcal _{\nu }(x;q^{-1})$ is equal, up to a
factor of the form $q^{d}$, to the polynomial part of an infinite
$\GL_{l}$ character series
\begin{equation}\label{e:L=G-pre}
q^{d} \, \Gcal _{\nu }(x;q^{-1}) = \Lcal _{\beta /\alpha }(x;q)_{\pol
}
\end{equation}
defined by Grojnowski and the second author \cite{GrojHaim07}.  In
Theorem \ref{thm:main-L} we establish an identity of infinite series
\begin{equation}\label{e:stable-RHS}
\Hcat _{\bb }(x) = \sum _{a_{1},\ldots,a_{l-1}\geq 0} t^{|\aA |} \Lcal
^{\sigma }_{((b_{l},\ldots,b_{1})+(0;\aA ))/(\aA ;0)}(x;q),
\end{equation}
where $\Lcal ^{\sigma }_{\beta /\alpha }(x;q)$ is a `twisted' variant
of $\Lcal _{\beta /\alpha }(x;q)$ (see \S \ref{s:LLT}).  Then
\eqref{e:main-G-pre} follows once we see that the polynomial part of
the right hand side of \eqref{e:stable-RHS} is equal to the right hand
side of \eqref{e:main-G-pre} with the $\omega $ omitted.

In fact, \eqref{e:L=G-pre} holds when $\alpha _{i}\leq \beta _{i}$ for
all $i$, and otherwise $\Lcal _{\beta /\alpha }(x;q)_{\pol } = 0$.
When we take the polynomial part in \eqref{e:stable-RHS}, this leaves
a non-vanishing term $t^{|\aA |} q^{d} \Gcal _{\nu (\lambda
)}(x;q^{-1})$ for each path $\lambda $ under the given line segment,
with $\aA $ giving the number of lattice squares in each column
between $\lambda $ and the highest path $\delta $, so $t^{|\aA |} =
t^{a(\lambda )}$.  The factor $q^{d}$ from \eqref{e:L=G-pre} turns out
to be precisely $q^{\dinv _{p}(\lambda )}$, yielding
\eqref{e:main-G-pre}.

Finally, the infinite series identity \eqref{e:stable-RHS} from
Theorem \ref{thm:main-L} is essentially a corollary to a Cauchy
identity for non-symmetric Hall-Littlewood polynomials, Theorem
\ref{thm:Cauchy}.  This Cauchy formula is quite general and can be
applied in other situations, some of which we will take up elsewhere.

\subsection{Further remarks}
\label{ss:further-remarks}

(i) The conjectures in \cite{BeGaSeXi16, HaHaLoReUl05} and proofs in
\cite{CarlMell18, Mellit16} use a version of $\dinv (\lambda )$ that
coincides with $\dinv _{p}(\lambda )$ for $p = n/m-\epsilon $.
Alternatively, one can tilt the segment from $(0,kn)$ to $(km,0)$ in
the other direction, taking $r = km$ and $s$ slightly larger than
$kn$, to get a version of the original conjectures with a variant of
$\dinv (\lambda )$ that coincides with $\dinv _{p}(\lambda )$ for $p =
n/m+\epsilon $.  Our theorem implies this version as well.

\smallskip

(ii) Haglund, Zabrocki and the third author \cite{HagMoZa12}
formulated a `compositional' generalization of the original shuffle
conjecture, in which the sum over Dyck paths is decomposed into
partial sums over paths touching the diagonal at specified points.
Bergeron et al.\ also gave a compositional form of their $(km,kn)$
shuffle conjecture in \cite{BeGaSeXi16}.  The proofs by Carlsson and
Mellit \cite{CarlMell18} and Mellit \cite{Mellit16} include the
compositional forms of the conjectures, and indeed this seems to be
essential to their methods.

Our results here do not cover the compositional shuffle 
theorems.
For the generalization to paths under any line, it is not yet clear
whether something like a compositional extension is possible or what
form it might take.

\smallskip

(iii) By \cite[Proposition 5.3.1]{HaHaLoReUl05}, the LLT polynomials
$\Gcal _{\nu (\lambda )}(x;q)$ in \eqref{e:main-G-pre} are $q$ Schur
positive, meaning that their coefficients in the basis of Schur
functions belong to $\NN [q]$.  The right hand side of
\eqref{e:main-G-pre} is therefore $q,t$ Schur positive.  In the cases
corresponding to the $(km,kn)$ shuffle theorem for $k = 1$, this can
also be seen from the representation theoretic interpretation of the
right hand side given by Hikita \cite{Hikita14}.

Identity \eqref{e:main-G-pre} therefore implies that the expression
$D_{\bb }\cdot 1$ on the left hand side is $q,t$ Schur positive.  In
the cases where the left hand side coincides with $\nabla^{m} e_{k}$,
this can be explained using the representation theoretic
interpretations of $\nabla e_{k}$ in \cite{Haiman02} and $\nabla ^{m}
e_{k}$ in \cite[Proposition 6.1.1]{HaHaLoReUl05}.  In \S
\ref{s:positivity} we conjecture a more general condition for $D_{\bb
}\cdot 1$ to be $q,t$ Schur positive.

\smallskip

(iv) The algebraic left hand side of \eqref{e:main-G-pre} is
manifestly symmetric in $q$ and $t$.  Hence, the combinatorial right
hand side is also symmetric in $q$ and $t$.  No direct combinatorial
proof of this symmetry is currently known.

\section{Symmetric functions and \texorpdfstring{$\GL_{l}$}{GL\_l}
characters}
\label{s:symmetric}

This section serves to fix notation and terminology for standard
notions concerning symmetric functions and characters of the general
linear groups $\GL _{l}$.

\subsection{Symmetric functions}
\label{ss:symmetric}

Integer partitions are written $\lambda = (\lambda _{1}\geq \cdots
\geq \lambda _{l})$, possibly with trailing zeroes.  We set $|\lambda
| = \lambda _{1}+\cdots +\lambda _{l}$ and let $\ell(\lambda )$ be the
number of non-zero parts.
We also define
\begin{equation}\label{e:n-lambda}
n(\lambda ) = \sum _{i} (i-1)\lambda _{i}.
\end{equation}
The transpose of $\lambda $ is denoted $\lambda ^{*}$.

The partitions of a given integer $n$ are partially ordered by
\begin{equation}\label{e:dominance}
\lambda \leq \mu \quad \text{if}\quad 
\lambda _{1}+\cdots +\lambda _{k} \leq \mu _{1}+\cdots +\mu _{k}
\text{ for all $k$},
\end{equation}
where the sums include trailing zeroes for $k>\ell(\lambda )$ or
$k>\ell(\mu )$.

The (French style) Young
or Ferrers
diagram of a partition $\lambda $ is the set of lattice points
$\{(i,j)\mid 1\leq j\leq \ell(\lambda ),\; 1\leq i \leq \lambda _{j}
\}$.  We often identify $\lambda$ and its diagram with the set of
lattice squares, or {\em boxes}, with northeast corner at a point
$(i,j)\in \lambda$.
A {\em skew diagram} (or {\em skew shape}) $\lambda /\mu $ is
the difference between the diagram of a partition $\lambda $ and that
of a partition $\mu \subseteq \lambda $ contained in it.
The {\em diagram generator} of $\lambda $ is the polynomial
\begin{equation}\label{e:B-lambda}
B_{\lambda }(q,t) = \sum _{(i,j)\in \lambda} q^{i-1}\, t^{j-1}.
\end{equation}

Let $\Lambda = \Lambda _{\kk }(X)$ be the algebra of symmetric
functions in an infinite alphabet of variables $X =
x_{1},x_{2},\ldots$, with coefficients in the field $\kk = \QQ (q,t)$.
We follow Macdonald's notation \cite{Macdonald95} for various graded
bases of $\Lambda $, such as the elementary symmetric functions
$e_{\lambda } = e_{\lambda _{1}}\cdots e_{\lambda _{k}}$, complete
homogeneous symmetric functions $h_{\lambda } = h_{\lambda _{1}}\cdots
h_{\lambda _{k}}$, power-sums $p_{\lambda } = p_{\lambda _{1}}\cdots
p_{\lambda _{k}}$, monomial symmetric functions $m_{\lambda }$ and
Schur functions $s_{\lambda }$.  The involutory $\kk $-algebra
automorphism $\omega \colon \Lambda \rightarrow \Lambda $ mentioned in
the introduction may be defined by any of the formulas
\begin{equation}\label{e:omega}
\omega \, e_{k} = h_{k},\quad \omega \, h_{k} = e_{k},\quad \omega \,
p_{k} = (-1)^{k-1} p_{k},\quad \omega \, s_{\lambda } = s_{\lambda
^{*}}.
\end{equation}
We also need the symmetric bilinear inner product $\langle -, -
\rangle$ on $\Lambda $ defined by any of
\begin{equation}\label{e:inner-product}
\langle s_{\lambda },s_{\mu } \rangle = \delta _{\lambda ,\mu },\qquad
\langle h_{\lambda },m_{\mu } \rangle = \delta _{\lambda ,\mu },\qquad
\langle p_{\lambda },p_{\mu } \rangle = z_{\lambda }\, \delta
_{\lambda ,\mu },
\end{equation}
where $z_{\lambda } = \prod _{i} r_{i}!\, i^{r_{i}}$ if $\lambda =
(1^{r_{1}},2^{r_{2}},\ldots)$.

We write $f^{\bullet }$ for the operator of multiplication by a
function $f$.  Otherwise, the custom of writing $f$ for both the
operator and the function would make it hard to distinguish between
operator expressions such as $(\omega f)^{\bullet }$ and $\omega \cdot
f^{\bullet }$.  When $f$ is a symmetric function, we write $f^{\perp
}$ for the $\langle -, - \rangle$ adjoint of $f^{\bullet }$.

\subsection{}
\label{ss:plethysm}

We briefly recall the device of {\em plethystic evaluation}.  If $A$
is an expression in terms of indeterminates, such as a polynomial,
rational function, or formal series, we define $p_{k}[A]$ to be the
result of substituting $a^{k}$ for every indeterminate $a$ occurring
in $A$.  We define $f[A]$ for any $f\in \Lambda $ by substituting
$p_{k}[A]$ for $p_{k}$ in the expression for $f$ as a polynomial in
the power-sums $p_{k}$, so that $f\mapsto f[A]$ is a homomorphism.

The variables $q, t$ from our ground field $\kk $ count as
indeterminates.

As a simple example, the plethystic evaluation $f[x_{1}+\cdots
+x_{l}]$ is just the ordinary evaluation $f(x_{1},\ldots,x_{l})$,
since $p_{k}[x_{1}+\cdots +x_{l}] = x_{1}^{k}+\cdots +x_{l}^{k}$.
This also works in infinitely many variables.

When $X = x_{1},x_{2},\ldots$ is the name of an infinite alphabet of
variables, we use $f(X)$, with round brackets, as an abbreviation for
$f(x_{1},x_{2},\ldots)\in \Lambda (X)$.  In this situation we also
make the convention that {\em when $X$ appears inside plethystic
brackets, it means $X = x_{1}+x_{2}+\cdots $.}  With this convention,
$f[X]$ is another way of writing $f(X)$.  

As a second example and caution to the reader, the formula in
\eqref{e:omega} for $\omega \, p_{k}$ implies the identity $\omega
f(X) = f[-z X]|_{z=-1}$.  Note that $f[-z X]|_{z=-1}$ does not reduce
to $f(X)$, as it might at first appear, since specializing the
indeterminate $z$ to a number does not commute with plethystic
evaluation.

Plethystic evaluation of a symmetric infinite series is allowed if the
result converges as a formal series.  The series
\begin{equation}\label{e:Omega}
\Omega = 1 + \sum _{k>0} h_{k} = \exp \sum _{k>0} \frac{p_{k}}{k},
\quad \text{or}\quad \Omega [a_{1}+a_{2}+\cdots -b_{1}-b_{2}-\cdots ]
= \frac{\prod_{i} (1-b_{i})}{\prod_{i} (1-a_{i})}
\end{equation}
is particularly useful.  The classical Cauchy identity can be written
using this notation as
\begin{equation}\label{e:classical-Cauchy}
\Omega [XY] = \sum _{\lambda }s_{\lambda }[X]s_{\lambda }[Y].
\end{equation}
Taking the inner product with $f(X)$ in \eqref{e:classical-Cauchy}
yields $f[A] = \langle \Omega [AX], f(X) \rangle$, which implies
\begin{equation}\label{e:Cauchy-II}
\langle \Omega [AX]\Omega [BX],f(X) \rangle = f[A+B] = \langle \Omega
[BX],f[X+A] \rangle.
\end{equation}
As $B$ is arbitrary, $\Omega [BX]$ is in effect a general symmetric
function, so \eqref{e:Cauchy-II} implies
\begin{equation}\label{e:Omega-perp}
\Omega [AX]^{\perp }f(X) = f[X+A].
\end{equation}
Note that although $\Omega [AX]^{\perp } = \sum _{k} h_{k}[AX]^{\perp
}$ is an infinite series, it converges formally as an operator applied
to any $f\in \Lambda (X)$, since $h_{k}[AX]^{\perp }$ has degree $-k$,
and so kills $f$ for $k \gg 0$.

Identifying $\Lambda $ with a polynomial ring in the power-sums
$p_{k}$, we have
\begin{equation}\label{e:pk-perp}
p_{k}^{\perp } = k \frac{\partial }{\partial p_{k}}.
\end{equation}
In fact, taking $A = z$ and $f = p_{k}$ in \eqref{e:Omega-perp} shows
that $\exp ( \sum (p_{k}^{\perp } z^{k})/k)$ is the operator that
substitutes $p_{k}+z^{k}$ for $p_{k}$  in any polynomial
$f(p_{1},p_{2},\ldots)$.  This operator can also be written $\exp
(\sum z^{k} \frac{\partial }{\partial p_{k}})$, giving
\eqref{e:pk-perp}.

Another consequence of \eqref{e:Omega-perp} is the operator identity
\begin{equation}\label{e:Omega-perp-Omega}
\Omega [AX]^{\perp }\, \Omega [BX]^{\bullet } = \Omega [AB]\, \Omega
[BX]^{\bullet }\, \Omega [AX]^{\perp }
\end{equation}
with notation $\Omega [BX]^{\bullet }$ as in \S \ref{ss:symmetric}.

\subsection{\texorpdfstring{$\GL_{l}$}{GL\_l} characters}
\label{ss:GL-characters}

The weight lattice of $\GL _{l}$ is $X = \ZZ ^{l}$, with Weyl group $W
= S_{l}$ permuting the coordinates.  Letting $\varepsilon
_{1},\ldots,\varepsilon _{l}$ be unit vectors, the positive roots are
$\varepsilon _{i} - \varepsilon _{j}$ for $i<j$, with simple roots
$\alpha _{i} = \varepsilon _{i} - \varepsilon _{i+1}$ for
$i=1,\ldots,l-1$.  The standard pairing on $\ZZ ^{l}$ in which the
$\varepsilon _{i}$ are orthonormal identifies the dual lattice $X^{*}$
with $X$.  Under this identification, the coroots coincide with the
roots, and the simple coroots $\alpha _{i}\chek $ with the simple
roots.  A weight $\lambda \in \ZZ ^{l}$ is dominant if $\lambda
_{1}\geq \cdots \geq \lambda _{l}$; a weight is regular (has trivial
stabilizer in $S_{l}$) if $\lambda_{1},\ldots,\lambda_{l}$ are
distinct.

A {\em polynomial weight} is a
dominant weight $\lambda $ such that $\lambda _{l}\geq 0$.  In other
words, polynomial weights of $\GL _{l}$ are integer partitions of
length at most $l$.

The algebra of virtual $\GL _{l}$ characters $(\kk X)^{W}$ can be
identified with the algebra of symmetric Laurent polynomials $\kk
[x_{1}^{\pm 1},\ldots,x_{l}^{\pm 1}]^{S_{l}}$.  If $\lambda $ is a
polynomial weight, the irreducible character $\chi _{\lambda }$ is
equal to the Schur function $s_{\lambda }(x_{1},\ldots,x_{l})$.  Given
a virtual $\GL _{l}$ character $f(x)= f(x_1,\dots,x_l)  = \sum _{\lambda }c_{\lambda
}\chi _{\lambda }$, 
we denote the partial sum over polynomial weights
$\lambda $ by $f(x)_{\pol }$.  Thus, $f(x)_{\pol }$ is a symmetric
polynomial in $l$ variables.  We also use this notation for infinite
formal sums $f(x)$ of irreducible $\GL _{l}$ characters, in which case
$f(x)_{\pol }$ is a symmetric formal power series.

The Weyl symmetrization operator for $\GL _{l}$ is
\begin{equation}\label{e:Weyl-symmetrization}
\sigmabold \, f(x_{1},\ldots,x_{l}) = \sum _{w\in S_{l}}
w\left(\frac{f(x)}{\prod _{i<j}(1-x_{j}/x_{i})} \right).
\end{equation}
For dominant weights $\lambda$,
the Weyl character formula can be written $\chi _{\lambda } = \sigmabold
(x^{\lambda })$.

Fix a weight $\rho$ such that $\langle \alpha _{i}\chek ,\rho \rangle
= 1$ for each simple coroot $\alpha _{i}\chek $, e.g., $\rho =
(l-1,\ldots,1,0)$.  Although $\rho $ is only unique up to adding a
constant vector, all formulas in which $\rho $ appears will be
independent of the choice.  Let $\mu $ be any weight, not necessarily
dominant.  If $\mu +\rho $ is not regular, then $\sigmabold (x^{\mu }) = 0$.
Otherwise, if $w\in S_{l}$ is the unique permutation
such that $w(\mu +\rho )=\lambda +\rho $ for $\lambda $ dominant,
\begin{equation}\label{e:Weyl-general}
\sigmabold (x^{\mu }) = (-1)^{\ell(w)}\chi _{\lambda }.
\end{equation}

The following identities are useful for working with the Weyl
symmetrization operator.

\begin{lemma}\label{lem:Cauchy-Weyl}
For any $\GL _{l}$ weights $\lambda ,\mu $ and Laurent polynomial
$\phi(x)=\phi (x_{1},\ldots ,x_{l})$, we have
\begin{gather} \label{e:Cauchy-Weyl1}
\overline{\chi_\lambda}\, \prod _{i<j}(1-x_{i}/x_{j}) = \sum _{w\in
S_{l}} (-1)^{\ell(w)}x^{-w(\lambda +\rho ) +\rho },\\
 \label{e:Cauchy-Weyl2}
\langle \chi _{\lambda } \rangle \, \sigmabold(\phi (x)) = \langle
x^{0} \rangle \, \overline{ \chi _{\lambda }} \, \phi (x)\prod _{i<j}
(1-x_{i}/x_{j}),\\
\label{e:Cauchy-Weyl3}
\sigmabold (x^{\mu })_{\pol } = \langle z^{-\mu } \rangle \Omega
[\overline{Z}X] \, \prod _{i<j}(1-z_{i}/z_{j})
\end{gather}
in alphabets $X = x_{1}+\cdots + x_{l}$ and $Z = z_{1}+\cdots +z_{l}$,
where $\overline{Z} = z_{1}^{-1}+\cdots +z_{l}^{-1}$.
\end{lemma}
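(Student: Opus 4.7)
The plan is to prove the three identities in sequence, each leveraging the classical Weyl character formula.

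For \eqref{e:Cauchy-Weyl1}, I would start from the Weyl character formula in the form $\chi_\lambda \cdot \prod_{i<j}(x_i - x_j) = \sum_w (-1)^{\ell(w)} x^{w(\lambda+\rho)}$, and factor $x^\rho$ out of the Vandermonde to rewrite this as $\chi_\lambda \prod_{i<j}(1 - x_j/x_i) = \sum_w (-1)^{\ell(w)} x^{w(\lambda+\rho) - \rho}$. Substituting $x_i \mapsto x_i^{-1}$ throughout then turns the left-hand side into $\overline{\chi_\lambda}\prod_{i<j}(1-x_i/x_j)$ and the right-hand side into $\sum_w (-1)^{\ell(w)} x^{-w(\lambda+\rho)+\rho}$, which is \eqref{e:Cauchy-Weyl1}.

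For \eqref{e:Cauchy-Weyl2}, I would apply orthogonality of irreducible $\GL_l$ characters: writing $\sigmabold(\phi) = \sum_\lambda c_\lambda \chi_\lambda$, the standard inner product formula on the character ring gives $c_\lambda = (l!)^{-1}\,\langle x^0\rangle\,\sigmabold(\phi)\,\overline{\chi_\lambda}\prod_{i\neq j}(1 - x_i/x_j)$. Substituting the definition $\sigmabold(\phi) = \sum_w w(\phi/\prod_{i<j}(1-x_j/x_i))$ and using the $W$-invariance of $\overline{\chi_\lambda}$, of $\prod_{i\neq j}(1-x_i/x_j)$, and of the constant-term functional, I can reindex by $w^{-1}$ inside each summand to collapse the sum over $W$ into a factor of $l!$ that cancels the prefactor. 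The factorization $\prod_{i\neq j}(1-x_i/x_j) = \prod_{i<j}(1-x_i/x_j)\cdot \prod_{i<j}(1-x_j/x_i)$ then cancels the remaining denominator in $\sigmabold(\phi)$, leaving \eqref{e:Cauchy-Weyl2}.

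For \eqref{e:Cauchy-Weyl3}, I would invoke the Cauchy identity \eqref{e:classical-Cauchy} on the finite alphabets $\overline{Z}$ and $X$ to expand $\Omega[\overline{Z}X] = \sum_\lambda s_\lambda(z^{-1})\,s_\lambda(x)$, with the sum effectively over partitions $\lambda$ of length at most $l$. Multiplying by $\prod_{i<j}(1-z_i/z_j)$ and applying \eqref{e:Cauchy-Weyl1} in the $z$-variables (noting $\chi_\lambda = s_\lambda$ for partitions) replaces $s_\lambda(z^{-1})\prod_{i<j}(1-z_i/z_j)$ by $\sum_w (-1)^{\ell(w)} z^{-w(\lambda+\rho)+\rho}$. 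Extracting the coefficient of $z^{-\mu}$ picks out exactly the pairs $(\lambda,w)$ with $w(\lambda+\rho) = \mu+\rho$; comparison with \eqref{e:Weyl-general} identifies the result with $\sigmabold(x^\mu)_{\pol}$.

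The main subtlety is the bookkeeping in this final step. If $\mu+\rho$ has a repeated coordinate, no $(\lambda,w)$ with $\lambda$ a partition can satisfy $w(\lambda+\rho) = \mu+\rho$ (since $\lambda+\rho$ is strictly decreasing), and both sides vanish. If $\mu+\rho$ is regular, there is a unique $w$ such that $w^{-1}(\mu+\rho) = \lambda+\rho$ is dominant, and the contribution $(-1)^{\ell(w)} s_\lambda(x)$ appears on the right precisely when $\lambda$ is a partition (i.e., $\lambda_l \geq 0$). Since $\sigmabold(x^\mu)_{\pol}$ retains exactly this contribution from \eqref{e:Weyl-general} and discards any non-polynomial $\chi_\lambda$, the two expressions agree.
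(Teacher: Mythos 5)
Your proof is correct, and it parallels the paper's for \eqref{e:Cauchy-Weyl1} and \eqref{e:Cauchy-Weyl3} but takes a genuinely different route to \eqref{e:Cauchy-Weyl2}. The paper reduces \eqref{e:Cauchy-Weyl2} by linearity to the case $\phi = x^\mu$, then applies \eqref{e:Cauchy-Weyl1} to the right-hand side and compares term-by-term with \eqref{e:Weyl-general}; so \eqref{e:Cauchy-Weyl1} does all the work. You instead invoke the Weyl integration formula for $\GL_l$ (orthogonality of the $\chi_\lambda$ against the measure $(l!)^{-1}\prod_{i\ne j}(1-x_i/x_j)$ under constant term), expand $\sigmabold(\phi)$ from its definition, and use the $W$-invariance of $\overline{\chi_\lambda}$, of $\prod_{i\ne j}(1-x_i/x_j)$, and of the constant-term functional to collapse the sum over $W$ and cancel the $l!$; the surviving Vandermonde factor then cancels the denominator inside $\sigmabold$. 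Both arguments are sound; the paper's is slightly more self-contained, since it never needs to state the orthogonality relation, whereas yours gives a cleaner conceptual reading of \eqref{e:Cauchy-Weyl2} as the statement that the character coefficient is the constant term against the Weyl measure. For \eqref{e:Cauchy-Weyl3}, you bypass \eqref{e:Cauchy-Weyl2} and apply \eqref{e:Cauchy-Weyl1} directly in the $z$ variables; this is essentially the same computation the paper performs, just inlined, since the paper routes through the monomial case $\phi=z^\mu$ of \eqref{e:Cauchy-Weyl2}, whose proof is precisely your extraction-of-$z^{-\mu}$ step. Your careful bookkeeping at the end (singular versus regular $\mu+\rho$, and the restriction to polynomial weights $\lambda_l\ge0$) is exactly what is needed and matches \eqref{e:Weyl-general}.
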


\begin{proof}
Identity \eqref{e:Cauchy-Weyl1} follows directly from the Weyl
character formula.  To prove \eqref{e:Cauchy-Weyl2}, by linearity it
suffices to verify the formula when $\phi(x)= x^\mu$ is any Laurent
monomial.  Then using \eqref{e:Cauchy-Weyl1}, the right side becomes
$\langle x^{-\mu} \rangle \, \overline{ \chi _{\lambda }} \prod _{i<j}
(1-x_{i}/x_{j}) = \langle x^{-\mu} \rangle \sum _{w\in S_{l}}
(-1)^{\ell(w)}x^{-w(\lambda +\rho ) +\rho }.$ This is $(-1)^{\ell(w)}$
if $\mu+\rho =w(\lambda +\rho )$, or zero if there is no such $w$,
which agrees with $\langle \chi _{\lambda } \rangle \,
\sigmabold(x^\mu)$.

The last identity is then proved from the Cauchy identity
\eqref{e:classical-Cauchy} followed by \eqref{e:Cauchy-Weyl2} (applied
with $z$ in place of $x$ on the right):
\begin{multline}
\langle z^{-\mu } \rangle \Omega [\overline{Z}X] \, \prod
_{i<j}(1-z_{i}/z_{j}) = \sum_{\lambda}  s_\lambda(X) \cdot \langle
z^{-\mu } \rangle s_\lambda[\overline{Z}] \prod _{i<j}(1-z_{i}/z_{j})
\\
= \sum_{\lambda} s_\lambda(X) \cdot \langle \chi _{\lambda } \rangle
\sigmabold(x^\mu) = \sigmabold (x^{\mu })_{\pol }. \qed 
\end{multline}
\let\qed\relax
\end{proof}

\subsection{Hall-Littlewood symmetrization}
\label{ss:Hall-Littlewood}

Given a Laurent polynomial $\phi (x_{1},\ldots,x_{l})$, we define
\begin{equation}\label{e:Hq}
\Hbold _{q}(\phi (x)) = \sigmabold \Bigl( \frac{\phi (x)}{\prod
_{i<j}(1-q\, x_{i}/x_{j})} \Bigr) = \sum _{w\in S_{l}}
w\left(\frac{\phi (x)}{\prod _{i<j}((1-x_{j}/x_{i})(1-q\,
x_{i}/x_{j}))} \right).
\end{equation}
Here and in similar raising operator formulas elsewhere, the factors
$1/(1-q\, x_{i}/x_{j})$ are to be understood as geometric series,
making $\Hbold _{q}(\phi (x))$ an infinite formal sum of irreducible
$\GL _{l}$ characters with coefficients in $\kk $.  Since $1/(1-q\,
x_{i}/x_{j})$ is a power series in $q$, if we expand the coefficients
of $\phi (x)$ as formal Laurent series in $q$, then 
$\Hbold _{q}(\phi(x))$
becomes a formal Laurent series in $q$ over virtual $\GL_{l}$
characters.  This is how the last formula in \eqref{e:Hq} should be
interpreted.

\begin{remark}\label{rem:HL-pols}
The dual Hall-Littlewood polynomials, defined by $H_{\mu }(X;q) = \sum
_{\lambda }K_{\lambda ,\mu }(q) s_{\lambda }$, where $K_{\lambda ,\mu
}(q)$ are the $q$-Kostka coefficients, are given in $l$ variables by
$H_{\mu }(x_{1},\ldots,x_{l};q) = \Hbold _{q}(x^{\mu })_{\pol }$.
This explains our terminology.
\end{remark}

\section{The Schiffmann algebra}
\label{s:schiffmann}

\subsection{}
\label{ss:schiffmann}

We recall here some definitions and results from
\cite{BurbSchi12,FeigTsym11,Negut14,Schiffmann12,SchiVass13}
concerning the elliptic Hall algebra $\Ecal $ of Burban and Schiffmann
\cite{BurbSchi12} (or {\em Schiffmann algebra}, for short) and its
action on the algebra of symmetric functions $\Lambda $, constructed
by Feigin and Tsymbauliak \cite{FeigTsym11} and Schiffmann and
Vasserot \cite{SchiVass13}.

From a certain point of view, this material is unnecessary: two of the
three quantities equated by \eqref{e:main-G-pre} and \eqref{e:Db-Hb-pre} are 
defined without reference to the Schiffmann algebra, and we
could take ``Shuffle Theorem'' to mean the
identity between these two, namely
\begin{equation}\label{e:main-G-Hb-pre}
\Hcat _{\bb }(x)_{\pol} = \sum _{\lambda } t^{a(\lambda )}\, q^{\dinv
_{p}(\lambda )}\, \Gcal _{\nu (\lambda )}(x_{1},\ldots,x_{l}; q^{-1}),
\end{equation}
with $\Hcat _{\bb }(x)$ as in \eqref{e:Hb-pre}.  This identity still
has the virtue of equating the combinatorial right hand side,
involving Dyck paths and LLT polynomials, with a simple algebraic left
hand side that is manifestly symmetric in $q$ and $t$.  Furthermore,
our proof of \eqref{e:main-G-pre} in Theorem \ref{thm:main-G} proceeds
by combining \eqref{e:Db-Hb-pre} with a proof of
\eqref{e:main-G-Hb-pre} (via Theorem \ref{thm:main-L}) that makes no
use of the Schiffmann algebra.

What we need the Schiffmann algebra for is to provide the link between
our shuffle theorem and the classical and $(km,kn)$ versions.  Indeed,
the very definition of the algebraic side in the $(km,kn)$ shuffle
theorem is $e_{k}[-MX^{m,n}]\cdot 1$ for a certain operator
$e_{k}[-MX^{m,n}]\in \Ecal $, while the classical shuffle theorems
refer implicitly to the same operators through the identity $\nabla
^{m} e_{k} = e_{k}[-MX^{m,1}]\cdot 1$.

In this section, we review what is needed to relate the symmetric
functions $\nabla ^{m} e_{k}$ and $(\Hcat _{\bb })_{\pol }$ to the
action of the elements $e_{k}[-MX^{m,n}]$ and $D_{\bb }$ on $\Lambda
$.  For ease of reference, we have collected most of the statements that
will be used elsewhere in the paper in \S \ref{ss:schiffmann-summary},
after the technical development in \S \S
\ref{ss:E}--\ref{ss:Negut-elements}.

\subsection{The algebra \texorpdfstring{$\Ecal $}{E}}
\label{ss:E}

Let $\kk =\QQ (q,t)$.  The Schiffmann algebra $\Ecal $ is generated by
subalgebras $\Lambda (X^{m,n})$ isomorphic to the algebra $\Lambda
_{\kk }$ of symmetric functions, one for each pair of coprime integers
$(m,n)$, and a central Laurent polynomial subalgebra $\kk [c_{1}^{\pm
1}, c_{2}^{\pm 1}]$, subject to some defining relations which we will
not list in full here, but only invoke a few of them as needed.

For purposes of comparison with
\cite{BurbSchi12,Schiffmann12,SchiVass13}, our notation (on the left
hand side of each formula) is related as follows to that in
\cite[Definition 6.4]{BurbSchi12} (on the right hand side).  Note that
our indices $(m,n)\in \ZZ ^{2}$ correspond to transposed indices
$(n,m)$ in \cite{BurbSchi12}.
\begin{equation}\label{e:E-notation}
\begin{gathered}
q = \sigma ^{-1},\quad t = \bar{\sigma }^{-1},\\
c_{1}^{m}c_{2}^{n} = \kappa _{n,m}^{-2},\\
\omega p_{k}(X^{m,n}) = \kappa _{kn,km}^{\varepsilon _{n,m}} u_{kn,km},\\
e_{k}[-\widehat{M} X^{m,n}] = \kappa _{kn,km}^{\varepsilon _{n,m}}
\theta _{kn,km},
\end{gathered}
\end{equation}
where $\varepsilon _{n,m}$, which is equal to $(1-\epsilon _{n,m})/2$
in the notation of \cite{BurbSchi12}, is given by
\begin{equation}\label{e:lower-half-indicator}
\varepsilon _{n,m} = \begin{cases}
1&	\text{$n<0$ or $(m,n)=(-1,0)$}\\
0&	\text{otherwise}.
\end{cases}
\end{equation}
The expression $e_{k}[-\widehat{M} X^{m,n}]$ in \eqref{e:E-notation}
uses plethystic substitution (\S \ref{ss:plethysm}) with
\begin{equation}\label{e:M-hat}
\widehat{M} = (1-\frac{1}{q\, t})M \quad \text{where}\quad M =
(1-q)(1-t).
\end{equation}
The quantity $M$ will be referred to repeatedly.

\subsection{Action of \texorpdfstring{$\Ecal $}{E} on
\texorpdfstring{$\Lambda $}{\textLambda}}
\label{ss:E-action}

A natural action of $\Ecal $ by operators on $\Lambda (X)$ has been
constructed in \cite{FeigTsym11,SchiVass13}.  Actually these
references give several different normalizations of essentially the
same action.  The action we use is a slight variation of the action in
\cite[Theorem 3.1]{SchiVass13}.

To write it down we need to recall some notions from the theory of
Macdonald polynomials.  Let $\Htild _{\mu }(X;q,t)$ denote the
modified Macdonald polynomials \cite{GarsHaim96}, which can be defined
in terms of the integral form Macdonald polynomials $J_{\mu }(X;q,t)$ of
\cite{Macdonald95} by
\begin{equation}\label{e:H-tilde}
\Htild _{\mu }(X;q,t) = t^{n(\mu )} J_{\mu
}[\frac{X}{1-t^{-1}};q,t^{-1}],
\end{equation}
with $n(\mu )$ as in \eqref{e:n-lambda}.  For any symmetric function
$f\in \Lambda $, let $f[B]$, $f[\overline{B}]$ denote the
eigenoperators on the basis $\{\tilde{H_{\mu }} \}$ of $\Lambda $ such
that
\begin{equation}\label{e:eigenoperators}
f[B]\, \Htild _{\mu } = f[B_{\mu }(q,t)]\, \Htild _{\mu },\quad
f[\overline{B}]\, \Htild _{\mu } = f[\overline{B_{\mu }(q,t)}]\, \Htild
_{\mu }
\end{equation}
with $B_{\mu }(q,t)$ as in \eqref{e:B-lambda} and $\overline{B_{\mu
}(q,t)} = B_{\mu }(q^{-1},t^{-1})$.  More generally we use the overbar
to signify inverting the variables in any expression, for example
\begin{equation}\label{e:M-bar}
\overline{M} = (1-q^{-1})(1-t^{-1}).
\end{equation}

The next proposition essentially restates the contents of
\cite[Theorem 3.1 and Proposition 4.10]{SchiVass13} in our notation.
To be more precise, since these two theorems refer to different
actions of $\Ecal $ on $\Lambda $, one must first use the plethystic
transformation in \cite[\S 4.4]{SchiVass13} to express
\cite[Proposition 4.10]{SchiVass13} in terms of the action in
\cite[Theorem 3.1]{SchiVass13}.  Rescaling the generators $u_{\pm
1,l}$ then yields the following.

\begin{prop}\label{prop:E-action}
There is an action of $\Ecal $ on $\Lambda $ characterized as follows.

\noindent
(i) The central parameters $c_{1},c_{2}$ act as scalars
\begin{equation}\label{e:center-action}
c_{1}\mapsto 1,\quad c_{2}\mapsto (q\, t)^{-1}.
\end{equation}
(ii) The subalgebras $\Lambda (X^{\pm 1,0})$ act as
\begin{equation}\label{e:X10-action}
f(X^{1,0})\mapsto (\omega f)[B-1/M],\quad f(X^{-1,0})\mapsto (\omega
f)[\overline{1/M-B}].
\end{equation}
(iii) The subalgebras $\Lambda (X^{0,\pm 1})$ act as
\begin{equation}\label{e:X01-action}
f(X^{0,1})\mapsto f[-X/M]^{\bullet },\quad f(X^{0,-1})\mapsto f(X)^{\perp },
\end{equation}
using the notation in \S \ref{ss:symmetric}.
\end{prop}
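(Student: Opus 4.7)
The plan is to derive this proposition from the existing constructions in \cite{SchiVass13}, with the action we want obtained by rescaling and a plethystic change of variable. First I would recall that \cite[Theorem~3.1]{SchiVass13} constructs an action of $\Ecal$ on $\Lambda$ in which the central elements act by specified scalars and the horizontal subalgebras $\Lambda(X^{0,\pm 1})$ (in our transposed indexing) act by multiplication and its adjoint in a certain plethystic alphabet, while the vertical generators $u_{\pm 1,l}$ act by operators built from the Macdonald eigenoperators $\nabla^{\pm 1}$ and the $f[B]$, $f[\overline{B}]$. In particular, \cite[Proposition~4.10]{SchiVass13} identifies the images of $u_{\pm 1,l}$ (or equivalently of $\omega p_l(X^{\pm 1,0})$) in terms of $B_\mu$ eigenoperators, but in a normalization differing from that of \cite[Theorem~3.1]{SchiVass13}.

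Next I would apply the plethystic transformation of \cite[\S 4.4]{SchiVass13} to pass between the two normalizations, so that the statements of Proposition~4.10 and Theorem~3.1 live in the same action. This is a routine manipulation: the plethystic intertwiner replaces $X$ by a scalar multiple of $X$, converting the $f[-X/M]^\bullet$ and $f(X)^\perp$ formulas into the form stated in part~(iii), and conjugating the $u_{\pm 1,l}$ formulas into operators of the form $(\omega f)[B-1/M]$ and $(\omega f)[\overline{1/M-B}]$ stated in part~(ii). Then I would fix the remaining scalar ambiguity by rescaling the generators $u_{\pm 1,l}$ by the factors $\kappa_{l,\pm 1}^{\varepsilon_{\pm 1, l}}$ dictated by our dictionary \eqref{e:E-notation}; these factors are precisely what turns $\kappa_{l,\pm 1}^{\varepsilon} u_{l,\pm 1}$ into $\omega p_l(X^{\pm 1,0})$ as defined in \S\ref{ss:E}. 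A corresponding rescaling of $c_1, c_2$ via $c_1^m c_2^n = \kappa_{n,m}^{-2}$ and the assignment \eqref{e:center-action} ensures that the central character in part~(i) is consistent with the relations in $\Ecal$ (e.g.\ with the commutators among $u_{\pm 1, l}$ and $u_{0,l}$ that involve powers of $c_1, c_2$).

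Finally, because $\Ecal$ is generated by $\Lambda(X^{1,0})$, $\Lambda(X^{-1,0})$, and the central elements (together with $\Lambda(X^{0,\pm 1})$), specifying the images of these generators determines the action completely. Thus I would verify that the formulas in (i)–(iii) define an action by checking that they respect the defining relations among these generators in $\Ecal$. But this check is exactly what \cite[Theorem~3.1, Proposition~4.10]{SchiVass13} already provides, once the plethystic rescaling above is applied. The upshot is that the proposition is really a translation statement: the action is the one of Schiffmann–Vasserot, transported into our notational conventions.

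The main obstacle, and essentially the only non-trivial content, is bookkeeping of the numerous normalization conventions: the transposition of $(m,n)$ versus $(n,m)$, the factors $\kappa_{n,m}^{\varepsilon_{n,m}}$, the plethystic factor $M$ versus $\widehat{M}$, and the involution $\omega$ appearing in \eqref{e:X10-action}. The cleanest way to manage this is to fix once and for all the dictionary \eqref{e:E-notation} and then check a few low-degree cases---say the images of $u_{0,1}$, $u_{\pm 1, 0}$, and $u_{\pm 1, 1}$ under the two actions---to pin down the remaining scalar, after which matching on all other generators follows from the intertwining property of the plethystic transformation.
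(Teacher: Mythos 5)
Your proposal follows essentially the same route as the paper: the paper states that Proposition \ref{prop:E-action} restates \cite[Theorem 3.1 and Proposition 4.10]{SchiVass13} after applying the plethystic transformation of \cite[\S 4.4]{SchiVass13} to reconcile the two normalizations and then rescaling the generators $u_{\pm 1,l}$, which is precisely the plan you lay out (you simply provide somewhat more detail on the bookkeeping of $\kappa_{n,m}^{\varepsilon_{n,m}}$ factors and the transposed indexing).
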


\begin{remark}\label{rem:Heisenberg}
The subalgebras $\Lambda (X^{\pm (m,n)})\subseteq \Ecal $ satisfy
Heisenberg relations that depend on the central element
$c_{1}^{m}c_{2}^{n}$.  If $c_{1}^{m}c_{2}^{n} = 1$, the Heisenberg
relation degenerates and $\Lambda (X^{\pm (m,n)})$ commute.  In
particular, the value $c_{1} \mapsto 1$ in \eqref{e:center-action}
makes $\Lambda (X^{\pm 1,0})$ commute, consistent with
\eqref{e:X10-action}.  The value $c_{2}\mapsto 1/(qt)$ makes $\Lambda
(X^{0,\pm 1})$ satisfy Heisenberg relations consistent with
\eqref{e:X01-action}.
\end{remark}

We will show in Proposition \ref{prop:Db}, below, that the elements
$p_{1}[-MX^{1,a}]\in \Ecal $ act on $\Lambda $ as operators $D_{a}$
given by the coefficients $D_{a} = \langle z^{-a} \rangle D(z)$ of a
generating series
\begin{equation}\label{e:Db}
D(z) = \sum _{a\in \ZZ } D_{a} z^{-a}
\end{equation}
defined by either of the equivalent formulas
\begin{equation}\label{e:Dz}
D(-z) = \Omega [-z^{-1}X]^{\bullet } \, \Omega [z M X]^{\perp }\quad
\text{or}\quad D(z) = (\omega \Omega [z^{-1}X])^{\bullet }\, (\omega
\Omega [-z M X])^{\perp },
\end{equation}
using the operator notation from \S \ref{ss:symmetric}.  These
operators $D_{a}$ differ by a sign $(-1)^{a}$ from those studied in
\cite{BeGaHaTe99,GaHaTe99}, and by a plethystic transformation from
operators previously introduced by Jing \cite{Jing94}.

\begin{lemma}\label{lem:pk-Db} We have the identities
\begin{equation}\label{e:pk-Db}
[\, (\omega p_{k}[-X/M])^{\bullet },\, D_{a}\, ] = -D_{a+k},\quad [\,
(\omega p_{k}(X))^{\perp },\, D_{a}\, ] = D_{a-k}.
\end{equation}
\end{lemma}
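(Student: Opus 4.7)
The plan is to verify both identities in generating series form. Multiplying each claimed commutator $[A,D_a] = \pm D_{a \pm k}$ by $(-z)^{-a}$ and summing over $a$ converts the lemma into
\[
[(\omega p_{k}[-X/M])^{\bullet },\, D(-z)] = (-1)^{k+1} z^{k}\, D(-z),\qquad
[(\omega p_{k}(X))^{\perp },\, D(-z)] = (-1)^{k} z^{-k}\, D(-z),
\]
which I will establish using the factorization $D(-z) = \Omega[-z^{-1}X]^{\bullet}\,\Omega[zMX]^{\perp}$ from \eqref{e:Dz}.

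For the first identity, since $(\omega p_{k}[-X/M])^{\bullet}$ is a multiplication operator it commutes with $\Omega[-z^{-1}X]^{\bullet}$, so only its commutator with $\Omega[zMX]^{\perp}$ is at issue. The operator form of \eqref{e:Omega-perp}, namely $\Omega[AX]^{\perp}f^{\bullet} = f[X+A]^{\bullet}\,\Omega[AX]^{\perp}$, yields
\[
[f^{\bullet},\Omega[zMX]^{\perp}] = \bigl(f - f[X+zM]\bigr)^{\bullet}\,\Omega[zMX]^{\perp}.
\]
The key simplification is that $f := \omega p_{k}[-X/M]$ is a $\kk$-scalar multiple of $p_{k}(X)$, specifically $(-1)^{k}p_{k}(X)/((1-q^{k})(1-t^{k}))$, as one sees from $\omega p_{k} = (-1)^{k-1}p_{k}$. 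Consequently $f[X+zM] - f$ is the pure scalar $(-1)^{k}p_{k}[zM]/((1-q^{k})(1-t^{k})) = (-1)^{k}z^{k}$, using $p_{k}[zM] = z^{k}(1-q^{k})(1-t^{k})$. Multiplying by $\Omega[-z^{-1}X]^{\bullet}$ on the left and extracting the coefficient of $(-z)^{-a}$ gives $[f^{\bullet}, D_a] = -D_{a+k}$.

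For the second identity, $(\omega p_{k}(X))^{\perp} = (-1)^{k-1}p_{k}^{\perp}$ commutes with every $\perp$ operator, so only its commutator with the multiplication factor $\Omega[-z^{-1}X]^{\bullet}$ matters. Since $p_{k}^{\perp} = k\,\partial/\partial p_{k}$ is a derivation of $\Lambda$, we have $[p_{k}^{\perp}, g^{\bullet}] = (p_{k}^{\perp}g)^{\bullet}$, and a direct computation using $\Omega[AX] = \exp\sum_{j} p_{j}[A] p_{j}(X)/j$ gives $p_{k}^{\perp}\Omega[-z^{-1}X] = p_{k}[-z^{-1}]\,\Omega[-z^{-1}X] = -z^{-k}\,\Omega[-z^{-1}X]$. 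This produces the scalar factor $(-1)^{k}z^{-k}$ multiplying $D(-z)$, and extracting the coefficient of $(-z)^{-a}$ delivers $[(\omega p_{k})^{\perp}, D_a] = D_{a-k}$. The whole argument is essentially mechanical once the generating series framing is in place; the only real obstacle is careful sign-tracking, complicated slightly by $D(z)$ being packaged through $D(-z)$ in \eqref{e:Dz}.
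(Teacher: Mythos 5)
Your computation is correct, and the generating-series framing is exactly what the paper uses. For the second identity your argument is essentially identical to the paper's: both reduce it to $[p_k^\perp,\Omega[AX]^\bullet]=p_k[A]\,\Omega[AX]^\bullet$ (a consequence of \eqref{e:pk-perp}), the only cosmetic difference being that the paper applies this with $A=z^{-1}$ and then conjugates the whole identity by $\omega$, while you absorb $\omega$ into the scalar $(-1)^{k-1}$ up front and work with $A=-z^{-1}$.

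For the first identity you genuinely part ways with the paper. The paper does not compute the commutator directly; instead it introduces the modified pairing $\langle f,g\rangle'=\langle f[-MX],g\rangle$, notes that $f[-X/M]^\bullet$ and $f^\perp$ are adjoint with respect to it, deduces that $D(z^{-1})$ is the $\langle -,-\rangle'$-adjoint of $D(z)$ (so $D_{-a}$ is adjoint to $D_a$), and then obtains the first identity by taking adjoints of the second. Your route is a direct calculation: you derive from \eqref{e:Omega-perp} the operator commutation $\Omega[AX]^\perp f^\bullet = f[X+A]^\bullet\,\Omega[AX]^\perp$, observe that $\omega p_k[-X/M] = (-1)^k p_k(X)/((1-q^k)(1-t^k))$ so that the resulting factor $f-f[X+zM]=(-1)^{k+1}z^k$ is a scalar, and read off the answer. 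Both are valid and about the same length; the paper's duality argument has the merit of exposing the structural symmetry $D_{-a}=D_a^{\dagger}$ (which is reused implicitly elsewhere), while your direct computation is more self-contained and makes the role of the Cauchy kernel $\Omega$ explicit, essentially reproving the $A,B$ of \eqref{e:Omega-perp-Omega} in a special case.
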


\begin{proof}
We start with the second identity, which is equivalent to
\begin{equation}\label{e:pk-Db-II}
[\, (\omega p_{k}(X))^{\perp },\, D(z)\, ] = z^{-k}D(z).
\end{equation}
Since all operators of the form $f(X)^{\perp }$ commute with each
other, \eqref{e:pk-Db-II} follows from the definition of $D(z)$ and
\begin{equation}\label{e:pk-Db-II-a}
[\, (\omega p_{k}(X))^{\perp },\, (\omega \Omega [z^{-1}X])^{\bullet }
\,] = z^{-k} (\omega \Omega [z^{-1}X])^{\bullet }.
\end{equation}
To verify the latter identity, note first that \eqref{e:pk-perp} and
$\Omega [z^{-1}X] = \exp \sum _{k>0} p_{k} z^{-k}/k$ imply
\begin{equation}\label{e:pk-Db-II-b}
[\, p_{k}(X)^{\perp },\, \Omega [z^{-1}X]^{\bullet }\, ] = z^{-k} \,
\Omega [z^{-1}X]^{\bullet }.
\end{equation}
Conjugating both sides by $\omega $ and using $(\omega f)^{\bullet } =
\omega \cdot f^{\bullet }\cdot \omega $ and $(\omega f)^{\perp } =
\omega \cdot f^{\perp }\cdot \omega $ gives \eqref{e:pk-Db-II-a}.

For the first identity in \eqref{e:pk-Db}, consider the modified inner
product
\begin{equation}\label{e:M-inner}
\langle f,g \rangle' = \langle f[-MX],g \rangle = \langle
f,g[-MX] \rangle.
\end{equation}
The second equality here, which shows that $\langle -,- \rangle'$ is
symmetric, follows from orthogonality of the power-sums $p_{\lambda
}$.  For any $f$, the operators $f^{\perp }$ and $f[-X/M]^{\bullet }$
are adjoint with respect to $\langle -,- \rangle'$.  Using this and
the definition of $D(z)$, we see that $D(z^{-1})$ is the $\langle -,-
\rangle'$ adjoint of $D(z)$, hence $D_{-a}$ is adjoint to $D_{a}$.
Taking adjoints on both sides of the second identity in
\eqref{e:pk-Db} now implies the first.
\end{proof}

\begin{prop}\label{prop:Db}
In the action of $\Ecal $ on $\Lambda $ given by Proposition
\ref{prop:E-action}, the element $p_{1}[-M X^{1,a}] = -M
p_{1}(X^{1,a})\in \Ecal $ acts as the operator $D_{a}$ defined by
\eqref{e:Db}.
\end{prop}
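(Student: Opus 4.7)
The plan is to verify the identity first in the base case $a = 0$, then extend to all $a \in \ZZ$ by induction using matching commutator recurrences on the two sides.

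For $a = 0$, take $f = p_1[-MX] = -M p_1 \in \Lambda$; since $\omega p_1 = p_1$, we have $\omega f = -M p_1$, and \eqref{e:X10-action} shows that $p_1[-MX^{1,0}]$ acts as the operator $-M\, p_1[B - 1/M] = 1 - MB$, i.e., diagonally on the modified Macdonald basis with eigenvalue $1 - M B_\mu(q,t)$ on $\Htild_\mu$. Extracting the coefficient of $z^0$ from \eqref{e:Dz} gives
\[
D_0 \;=\; \sum_{k \geq 0} (-1)^k\, e_k^{\bullet}\, h_k[MX]^{\perp},
\]
which is the classical Macdonald eigenoperator acting on $\Htild_\mu$ with the same eigenvalue $1 - M B_\mu(q,t)$ (as may be checked by a short direct calculation on $\Htild_{(1)} = p_1$ and bootstrapped via the fact that $D_0$ commutes with every $p_k[B-1/M]^{\bullet}$ from \eqref{e:X10-action}, or cited from the Bergeron-Garsia-Haiman-Tesler theory). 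Hence $p_1[-MX^{1,0}]$ acts as $D_0$.

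For the inductive step, specialize $k = 1$ in Lemma~\ref{lem:pk-Db}. Since $\omega p_1 = p_1$, its two commutator identities read
\[
[\,p_1[-X/M]^{\bullet},\, D_a\,] \;=\; -D_{a+1}, \qquad [\,p_1^{\perp},\, D_a\,] \;=\; D_{a-1},
\]
and by \eqref{e:X01-action} the operators $p_1[-X/M]^{\bullet}$ and $p_1^{\perp}$ are precisely the actions of $p_1(X^{0,1})$ and $p_1(X^{0,-1})$, respectively. It therefore suffices to establish in $\Ecal$ the parallel recurrences
\[
[\,p_1(X^{0,\pm 1}),\, p_1[-MX^{1,a}]\,] \;=\; \mp\, p_1[-MX^{1, a \pm 1}],
\]
after which a straightforward induction on $|a|$ propagates the base case to every $a \in \ZZ$. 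These recurrences are instances of the Burban--Schiffmann ``triangle'' commutation relations \cite[Def.~6.4]{BurbSchi12} for the lattice bases $\{(1,0),(a,1)\}$ and $\{(-1,0),(a,1)\}$ of $\ZZ^2$ (in their transposed conventions per \eqref{e:E-notation}): the central $\kappa_{\mathbf v}$-factors on the right of those relations specialize to scalars under \eqref{e:center-action}, and the plethystic conversion $e_1[-\widehat M Y] = (1 - 1/(qt))(-M\, p_1(Y))$ turns the $\theta_{a \pm 1,\, 1}$ on the right of the BS relation into $p_1[-MX^{1, a \pm 1}]$.

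The main obstacle is this final bookkeeping of central $\kappa$-factors, the signs $\varepsilon_{n,m}$ from the dictionary \eqref{e:E-notation}, and the $1 - 1/(qt)$ discrepancy between $M$ and $\widehat M$: one must verify that after the central specialization \eqref{e:center-action}, the constants in the BS triangle relation collapse to exactly $\mp 1$, with no residual scalar, so that the recurrence in $\Ecal$ matches Lemma~\ref{lem:pk-Db} term by term.
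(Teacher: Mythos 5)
Your proposal takes essentially the same approach as the paper: establish the case $a=0$ by comparing $p_1[-MX^{1,0}]\mapsto 1-MB$ (from \eqref{e:X10-action}) with the known eigenoperator formula $D_0\Htild_\mu = (1-MB_\mu)\Htild_\mu$, then extend to all $a$ by matching the commutator identities of Lemma \ref{lem:pk-Db} against the corresponding defining relations of $\Ecal$ via Proposition \ref{prop:E-action}(iii). The only minor divergences are that you restrict to $k=1$ and induct on $|a|$ where the paper uses the general-$k$ relations to pass directly from $a=0$, and that the paper simply cites \cite[Prop.\ 2.4]{Haiman99} for the $D_0$ eigenvalue and asserts the $\Ecal$-relations \eqref{e:E-commutators} without rederiving them from \cite{BurbSchi12}, so the ``bookkeeping'' you flag as an obstacle is implicitly outsourced to the defining relations rather than verified inline.
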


\begin{proof}
It is known \cite[Proposition 2.4]{Haiman99} that $D_{0}\Htild _{\mu }
= (1-MB_{\mu })\Htild _{\mu }$.  From Proposition \ref{prop:E-action}
(ii), we see that $p_{1}[-M X^{1,0}]$ acts by the same operator,
giving the case $a=0$.  

Among the defining relations of $\Ecal $ are
\begin{equation}\label{e:E-commutators}
[\, \omega p_{k}(X^{0,1}),\, p_{1}(X^{1,a})\, ] = -
p_{1}(X^{1,a+k}),\quad [\, \omega p_{k}(X^{0,-1}),\, p_{1}(X^{1,a})\,
] = p_{1}(X^{1,a-k}).
\end{equation}
Multiplying these by $-M$ and using Proposition \ref{prop:E-action}
(iii) to compare with \eqref{e:pk-Db} reduces the general result to
the case $a=0$.
\end{proof}

\subsection{The operator \texorpdfstring{$\nabla $}{\textnabla}}
\label{ss:nabla}

As in \cite{BeGaHaTe99}, we define an eigenoperator 
$\nabla $ on the
Macdonald basis by
\begin{equation}\label{e:nabla}
\nabla \Htild _{\mu } = t^{n(\mu )}q^{n(\mu ^{*})} \Htild _{\mu },
\end{equation}
with $n(\mu )$ as in \eqref{e:n-lambda}.  Since $t^{n(\mu )}q^{n(\mu
^{*})} = e_{n}[B_{\mu }(q,t)]$ for $n = |\mu |$, we see that $\nabla $
coincides in degree $n$ with $e_{n}[B]$.  Although the operators
$e_{n}[B]$ belong to $\Ecal $ acting on $\Lambda $, the operator
$\nabla $ does not.  Its role, rather, is to internalize a symmetry of
this action.

\begin{lemma}\label{lem:nabla-sym}
Conjugation by the operator $\nabla $ provides a symmetry of the
action of $\Ecal $ on $\Lambda $, namely
\begin{equation}\label{e:nabla-sym}
\nabla\, f(X^{m,n})\, \nabla^{-1} = f(X^{m+n,n}).
\end{equation}
\end{lemma}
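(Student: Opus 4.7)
The plan is to verify the stated identity on generators of $\Ecal$ and then propagate it via the algebra structure. The map $\phi_T \colon f(X^{m,n}) \mapsto f(X^{m+n,n})$ is a known automorphism of $\Ecal$ coming from the $\mathrm{SL}_2(\ZZ)$ action of Burban--Schiffmann, so the task becomes showing that the two representations $\rho \circ \phi_T$ and $\Ad(\nabla) \circ \rho$ of $\Ecal$ on $\Lambda$ coincide. Since $\Ecal$ is generated, modulo its center, by the four subalgebras $\Lambda(X^{\pm 1, 0})$ and $\Lambda(X^{0, \pm 1})$, and each is generated by its power-sums, it is enough to verify $\nabla p_k(X^{m,n}) \nabla^{-1} = p_k(X^{m+n,n})$ for $(m,n) \in \{(\pm 1, 0), (0, \pm 1)\}$ and all $k \geq 1$; the center acts by scalars, and one checks directly that $\phi_T$ preserves its image under $\rho$.

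For $(m,n) = (\pm 1, 0)$, which are fixed by $T$, I would reduce the claim to commutativity of $\nabla$ with $p_k(X^{\pm 1, 0})$. By Proposition \ref{prop:E-action}(ii) these act as eigenoperators on the modified Macdonald basis $\{\Htild_\mu\}$, and $\nabla$ is an eigenoperator on the same basis by definition, so they commute automatically.

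For $(m,n) = (0, \pm 1)$, I would use the commutator relations \eqref{e:E-commutators} inductively. Applying $\phi_T$ to the relation $[\omega p_k(X^{0,1}), p_1(X^{1,0})] = -p_1(X^{1,k})$ yields the valid $\Ecal$-relation $[\omega p_k(X^{1,1}), p_1(X^{1,0})] = -p_1(X^{1+k, k})$. Applying $\Ad(\nabla)$ to the first relation and comparing with the second would reduce the identity for $\omega p_k(X^{0,1})$ to the identity for $p_1(X^{1,k})$, and vice versa, so the full family of identities bootstraps from a single base case such as $\nabla p_1(X^{0,1}) \nabla^{-1} = p_1(X^{1,1})$.

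The hard part will be establishing this base case, because $\rho(p_1(X^{1,1}))$ is not given explicitly by Proposition \ref{prop:E-action} but only implicitly through its commutator $-[\rho(p_1(X^{0,1})), \rho(p_1(X^{1,0}))]$. To settle it, I would perform a direct eigenvalue computation on $\{\Htild_\mu\}$: using the expansion of $p_1^\bullet \Htild_\mu$ in the $\{\Htild_\nu\}$ basis, conjugation by $\nabla$ multiplies the coefficient on $\Htild_\nu$ by $t^{n(\nu)-n(\mu)} q^{n(\nu^*)-n(\mu^*)}$, which equals $q^{i-1} t^{j-1}$ whenever $\nu$ differs from $\mu$ by a single box at position $(i,j)$ --- precisely the contribution to $B_\nu - B_\mu$ appearing in the commutator formula for $\rho(p_1(X^{1,1}))$.
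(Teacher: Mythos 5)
Your overall strategy matches the paper's: both start from the Burban--Schiffmann automorphism $f(X^{m,n})\mapsto f(X^{m+n,n})$ and reduce to checking $\Ad(\nabla)$ agrees with it on a generating set, with the $(\pm 1,0)$ case disposed of by commutativity of $\nabla$ with the Macdonald eigenoperators. Your direct eigenvalue verification of the base case $\nabla\,p_1(X^{0,1})\,\nabla^{-1}=p_1(X^{1,1})$ is a genuine alternative to the paper's citation of \cite[I.12(iii)]{BeGaHaTe99}: writing $p_1\Htild_\mu=\sum_\nu c_{\mu\nu}\Htild_\nu$ with $\nu=\mu+\text{box}$ at $(i,j)$, the operator $-[\,p_1[-X/M]^\bullet,\,p_1[B-1/M]\,]$ acts by $(-1/M)\sum_\nu c_{\mu\nu}(B_\nu-B_\mu)\Htild_\nu$, and conjugation of $p_1^\bullet$ by $\nabla$ rescales the coefficient by $t^{n(\nu)-n(\mu)}q^{n(\nu^*)-n(\mu^*)}=q^{i-1}t^{j-1}=B_\nu-B_\mu$, so the two agree. (You should still cite or prove the Pieri-rule fact that $p_1\Htild_\mu$ is supported on $\nu\supset\mu$.)

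The gap is in the bootstrap paragraph. You announce that it suffices to check the identity on $\Lambda(X^{\pm 1,0})$ and $\Lambda(X^{0,\pm 1})$, but your reduction of $\omega p_k(X^{0,1})$ passes through $p_1(X^{1,k})\in\Lambda(X^{1,k})$, which is in none of those four subalgebras; so as written the induction does not close on a base case. You would first need to establish $\Ad(\nabla)\,p_1(X^{1,a})=p_1(X^{1+a,a})$ for all $a$, which requires a separate induction (e.g.\ on $a$ via $[p_1(X^{0,1}),p_1(X^{1,a})]=-p_1(X^{1,a+1})$ together with the transformed relation), and only then can the relation $[\omega p_k(X^{0,1}),p_1(X^{1,0})]=-p_1(X^{1,k})$ be used. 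Moreover, matching commutators with the single eigenoperator $p_1[B-1/M]$ does not by itself give equality of operators; it shows the difference commutes with a diagonalizable operator with distinct eigenvalues, and one must then invoke that a degree-raising operator with this property vanishes---an argument you omit. You also never address $\Lambda(X^{0,-1})$ or $n<0$. The paper avoids all of these complications by observing that the subalgebras $\Lambda(X^{m,n})$ with $n>0$ lie inside the algebra generated by the $p_1(X^{a,1})$ alone, and then shifting $a$ via commutators with $\Lambda(X^{\pm 1,0})$, which commute with $\nabla$ on the nose; the induction never leaves single power-sum generators and never needs $p_k$ for $k>1$.
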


\begin{proof}
For $m=\pm 1$, $n=0$, this says that $\nabla $ commutes with the other
Macdonald eigenoperators, which is clear.

It is known from \cite{BurbSchi12} that the group of $k$-algebra
automorphisms of $\Ecal $ includes one which acts on the subalgebras
$\Lambda (X^{m,n})$ by $f(X^{m,n})\mapsto f(X^{m+n,n})$, and on the
central subalgebra $\kk [c_{1}^{\pm 1},c_{2}^{\pm 1}]$ by an
automorphism which fixes the central character in Proposition
\ref{prop:E-action} (i).

The $\Lambda (X^{m,n})$ for $n>0$ are all contained in the subalgebra
of $\Ecal $ generated by the elements $p_{1}(X^{a,1})$.  To prove
\eqref{e:nabla-sym} for $n>0$, it therefore suffices to verify the
operator identity $\nabla \, p_{1}(X^{a,1})\, \nabla^{-1} =
p_{1}(X^{a+1,1})$.

In $\Ecal $ there are relations
\begin{equation}\label{e:E-commmutators-II}
[\, \omega p_{k}(X^{1,0}),\, p_{1}(X^{a,1})\, ] =
p_{1}(X^{a+k,1}),\quad [\, c_{1}^{-k}\omega p_{k}(X^{-1,0}),\,
p_{1}(X^{a,1})\, ] = -p_{1}(X^{a-k,1}).
\end{equation}
Since $\nabla $ commutes with the action of $\Lambda (X^{\pm 1,0})$,
these relations reduce the problem to the case $a=0$, that is, to the
identity $\nabla \, p_{1}(X^{0,1})\, \nabla ^{-1} = p_{1}(X^{1,1})$.
By Propositions \ref{prop:E-action} and \ref{prop:Db}, this is
equivalent to the operator identity $\nabla\cdot p_{1}(X)^{\bullet
}\cdot \nabla ^{-1} = D_{1}$, which is \cite[I.12 (iii)]{BeGaHaTe99}.

We leave the similar argument for the case $n<0$, using
\cite[I.12 (iv)]{BeGaHaTe99}, to the reader.
\end{proof}

\subsection{Shuffle algebra}
\label{ss:shuffle-algebra}

The operators of interest to us belong to the `right half-plane'
subalgebra $\Ecal ^{+}\subseteq \Ecal $ generated by the $\Lambda
(X^{m,n})$ for $m>0$, or equivalently by the elements
$p_{1}(X^{1,a})$.  The subalgebra $\Ecal ^{+}$ acts on $\Lambda $ as
the algebra generated by the operators $D_{a}$.  It was shown in
\cite{SchiVass13} that $\Ecal ^{+}$ is isomorphic to the {\em shuffle
algebra} constructed in \cite{FeigTsym11} and studied further in
\cite{Negut14}, whose definition we now recall.

We fix the rational function
\begin{equation}\label{e:Gamma}
\Gamma (x/y) = \frac{1-q\, t\, x/y}{(1-y/x)(1-q\, x/y)(1-t\, x/y)},
\end{equation}
and define, for each $l$, a $q,t$ analog of the symmetrization
operator $\Hbold _{q}$ in \eqref{e:Hq} by
\begin{multline}\label{e:Hqt}
\Hbold _{q,t}(\phi (x_{1},\ldots,x_{l})) = \sum _{w\in S_{l}} \left(
\phi(x) \cdot \prod _{i<j} \Gamma \bigl(\frac{x_{i}}{x_{j}}\bigr) \right)\\
 = \sigmabold \left(\frac{\phi (x)\, \prod _{i<j} (1 - q\, t\,
x_{i}/x_{j})}{\prod _{i<j} ((1 - q\, x_{i}/x_{j}) (1 - t\,
x_{i}/x_{j}))} \right).
\end{multline}
We write $\Hbold _{q,t}^{(l)}$ when we want to make the number of
variables explicit.

Let $T = T(\kk [x^{\pm 1}])$ be the tensor algebra on the Laurent
polynomial ring $\kk [x^{\pm 1}]$ in one variable, that is, the
non-commutative polynomial algebra with generators corresponding to
the basis elements $x^{a}$ of $\kk [x^{\pm 1}]$ as a vector space.
Identifying $T^{k} = T^{k}(\kk [x^{\pm 1}])$ with $\kk [x_{1}^{\pm
1},\ldots,x_{k}^{\pm 1}]$, the product in $T$ is given by
`concatenation,'
\begin{equation}\label{e:concatenation-product}
f\cdot g = f(x_{1},\ldots,x_{k})g(x_{k+1},\ldots,x_{k+l}),\quad
\text{for $f\in T^{k}$, $g\in T^{l}$}.
\end{equation}
For each $l$, let $I^{l}\subseteq T^{l}$ be the kernel of the
symmetrization operator $\Hbold _{q,t}^{(l)}$.  Since $\Hbold
_{q,t}^{(l)}$ factors through the operator $\Hbold _{q,t}^{(k)}$ in
any $k$ consecutive variables $x_{i+1},\ldots,x_{i+k}$, the graded
subspace $I = \bigoplus _{l}I^{l}\subseteq T$ is a two-sided ideal.
The shuffle algebra is defined to be the quotient $S = T/I$.  Note
that $S$ is generated by its tensor degree $1$ component $S^{1}$ by
definition.  We will not use the second, larger, type of shuffle
algebra that was also introduced in \cite{FeigTsym11,Negut14}.

\begin{prop}[{\cite[Theorem 10.1]{SchiVass13}}]
\label{prop:shuffle-isomorphism} There is an algebra isomorphism $\psi
\colon S \rightarrow \Ecal ^{+}$ given on the generators by $\psi
(x^{a}) = p_{1}[-MX^{1,a}]$.
\end{prop}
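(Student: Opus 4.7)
The plan is to realize $\psi$ as a factoring of a canonical map from the tensor algebra, then verify that the shuffle relations are killed, and finally compare dimensions. First, I would define $\tilde{\psi}\colon T \to \Ecal^+$ on the free generators $x^a$ of $T^1 = \kk[x^{\pm 1}]$ by $x^a \mapsto D_a = p_1[-MX^{1,a}]$; freeness of the tensor algebra extends this to an algebra homomorphism. Since $\Ecal^+$ is generated by the subalgebras $\Lambda(X^{1,a})$ with $a \in \ZZ$, and these are in turn generated from the currents $p_1(X^{1,a})$ via the Heisenberg relations internal to each $\Lambda(X^{1,a})$, the image of $\tilde{\psi}$ is all of $\Ecal^+$. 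This gives surjectivity.

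The main step is to show $\tilde{\psi}(I) = 0$, where $I = \bigoplus_{l} \ker \Hbold_{q,t}^{(l)}$. For this I would introduce the generating series $D(z) = \sum_{a} D_{a} z^{-a}$ from \eqref{e:Db} and use the product formulas \eqref{e:Dz} together with the operator identity \eqref{e:Omega-perp-Omega} to compute $D(z_1)D(z_2)$. The outcome should be a braided commutation of the form
\[
\Gamma(z_2/z_1)^{-1}\, D(z_1) D(z_2) = \Gamma(z_1/z_2)^{-1}\, D(z_2) D(z_1),
\]
with $\Gamma$ as in \eqref{e:Gamma}, showing that the operator-valued expression $\prod_{i<j} \Gamma(z_i/z_j)^{-1}\, D(z_1)\cdots D(z_l)$ is symmetric in the $z_i$. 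Extracting coefficients of a monomial then expresses $\tilde{\psi}(\phi)$, for $\phi \in T^l$, in terms of a Weyl-symmetrized pairing of $\phi$ against $\prod_{i<j}\Gamma(z_i/z_j)$, which is precisely $\Hbold_{q,t}^{(l)}(\phi)$ up to the prescribed normalization. Hence $\Hbold_{q,t}^{(l)}(\phi) = 0$ forces $\tilde{\psi}(\phi) = 0$ as an operator on $\Lambda$, and the faithfulness of the $\Ecal^+$-action on $\Lambda$ (on which the central values of Proposition \ref{prop:E-action} (i) place no restriction) lets us conclude $\tilde{\psi}(\phi) = 0$ in $\Ecal^+$. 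Thus $\tilde{\psi}$ descends to the desired surjection $\psi\colon S \to \Ecal^+$.

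Injectivity I would obtain by a dimension comparison. Both $S$ and $\Ecal^+$ carry a bigrading by tensor degree and total $x$-degree, and both are known to possess PBW-type bases indexed by finite collections of pairs $(m,n) \in \ZZ_{>0} \times \ZZ$, the positive roots of the elliptic Hall algebra, with matching bigraded Hilbert series; this follows on the $\Ecal^+$ side from \cite{BurbSchi12} and on the shuffle side from the analysis in \cite{Negut14}. Surjectivity plus equality of graded dimensions then forces $\psi$ to be an isomorphism.

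The principal obstacle lies in the computation in the second step: passing cleanly from the two-variable braided commutation to the full statement $\tilde{\psi}(I) = 0$ requires a careful induction using the factorization of $\Hbold_{q,t}^{(l)}$ through symmetrizations over adjacent pairs, together with precise bookkeeping for the simple poles $(1 - z_j/z_i)$ in $\Gamma(z_i/z_j)$, which must be matched against those implicit in the formal power series expansions defining $D(z_1) \cdots D(z_l)$ as composites of multiplication and $\perp$ operators. Making this cancellation rigorous, and verifying that the coefficient-extraction formula reproduces $\Hbold_{q,t}^{(l)}$ on the nose rather than up to an extra rational factor, is where most of the technical labor resides.
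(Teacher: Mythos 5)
The paper does not prove this proposition at all; it is taken verbatim from \cite[Theorem 10.1]{SchiVass13}, and the text after the statement only reconciles normalizations (the $-M$ factor and the $\Gamma$ versus $g$ conventions). So there is no proof here for your sketch to be measured against, and what you have written is a plausible high-level outline of the argument one would find in the cited source.

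That said, a few points in your sketch would not survive closer inspection. Your displayed ``braided commutation'' has the two $\Gamma$'s in the wrong slots: the symmetry of $\prod_{i<j}\Gamma(z_i/z_j)^{-1}\,D(z_1)\cdots D(z_l)$ in the $z_i$ (your stated conclusion, which is the correct one and follows from \eqref{e:Dz-straightened} together with $\Omega[-uM]=(1-u)(1-1/u)\,\Gamma(u)$) is equivalent, for $l=2$, to $\Gamma(z_1/z_2)^{-1}D(z_1)D(z_2)=\Gamma(z_2/z_1)^{-1}D(z_2)D(z_1)$, not to what you wrote; the two cannot both hold since $\Gamma(u)\neq\Gamma(1/u)$. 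Your justification of surjectivity is also off: each $\Lambda(X^{1,a})$ is a commutative copy of the ring of symmetric functions, so there are no ``Heisenberg relations internal to $\Lambda(X^{1,a})$'' and $p_1(X^{1,a})$ certainly does not generate $\Lambda(X^{1,a})$; generation of $\Ecal^+$ by the elements $p_1(X^{1,a})$ is a nontrivial fact of the Burban--Schiffmann structure theory obtained from commutators between different $a$. You also invoke faithfulness of the $\Ecal^+$-action on $\Lambda$ to lift operator identities to identities in $\Ecal^+$; this is true but nontrivial and needs a citation rather than an assertion (the parenthetical about central values does not supply one). Finally, you correctly identify that the substance of the theorem is the injectivity, but both the existence of the PBW-type basis for the shuffle algebra (Negut) and the matching of bigraded Hilbert series against the Burban--Schiffmann PBW basis for $\Ecal^+$ are themselves deep results; as written, essentially the entire burden of the theorem is being outsourced to the references, which is fine for a sketch but should be acknowledged explicitly.
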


For clarity, we note that the factor $-M$ in $p_{1}[-MX^{1,a}] = -M
p_{1}(X^{1,a})$ makes no difference to the statement, but is a
convenient normalization for us, since it makes $\psi
(x_{1}^{a_{1}}\cdots x_{l}^{a_{l}})$ act on $\Lambda $ as
$D_{a_{1}}\cdots D_{a_{l}}$.  We also note that our $\Gamma (x/y)$
differs by a factor symmetric in $x,y$ from the function $g(y/x)$ in
\cite[(10.3)]{SchiVass13}, which makes our shuffle algebra $S$
opposite to the algebra $\Sbold $ in \cite{SchiVass13}.  This is as it
should be, since the isomorphism in \cite{SchiVass13} is from $\Sbold
$ to the `upper half-plane' subalgebra of $\Ecal $ generated by the
elements $p_{1}(X^{a,1})$, and the symmetry
$p_{1}(X^{a,1})\leftrightarrow p_{1}(X^{1,a})$ is an antihomomorphism.

By construction, Laurent polynomials $\phi(x), \phi '(x)$ in variables
$x_{1},\ldots,x_{l}$ define the same element of $S$, or equivalently,
map via $\psi $ to the same element of $\Ecal ^{+}$, if and only if
$\Hbold _{q,t}(\phi ) = \Hbold _{q,t}(\phi ')$.  We can regard $\Hbold
_{q,t}(\phi )$ as an infinite formal sum of $\GL _{l}$ characters with
coefficients in $\kk $, in the same manner as for $\Hbold _{q}$.
Representing elements of $S$, or of $\Ecal ^{+}$, in this way leads to
the following useful formula for describing their action on $1\in
\Lambda $.

\begin{prop}\label{prop:H-formula}
Given a Laurent polynomial $\phi=\phi (x_{1},\ldots,x_{l})$, let $\zeta =
\psi (\phi ) \in \Ecal ^{+}$ be its image under the isomorphism in
Proposition \ref{prop:shuffle-isomorphism}.  Then with the action of
$\Ecal $ on $\Lambda $ given by Proposition \ref{prop:E-action}, we
have
\begin{equation}\label{e:H-formula}
\omega (\zeta \cdot 1)(x_{1},\ldots,x_{l}) = \Hbold _{q,t}(\phi)_{\pol }.
\end{equation}
Moreover, the Schur function expansion of the symmetric function
$\omega (\zeta \cdot 1)(X)$ contains only terms $s_{\lambda }$ with
$\ell(\lambda )\leq l$, so \eqref{e:H-formula} determines $\zeta \cdot
1$.
\end{prop}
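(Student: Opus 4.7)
The plan is to verify \eqref{e:H-formula} by reducing to monomials and computing both sides via the generating series $D(z)$ of \eqref{e:Dz}.  Both sides of \eqref{e:H-formula} are $\kk $-linear in $\phi $ and depend only on the class of $\phi $ modulo $\ker \Hbold _{q,t}$, so it suffices to take $\phi = x^{\aA } = x_{1}^{a_{1}}\cdots x_{l}^{a_{l}}$ for $\aA = (a_{1},\ldots ,a_{l})\in \ZZ ^{l}$.  By Propositions \ref{prop:shuffle-isomorphism} and \ref{prop:Db}, the image $\zeta = \psi (\phi )$ then acts on $\Lambda $ as $D_{a_{1}}\cdots D_{a_{l}}$, and
\[
\zeta \cdot 1 = \langle y^{-\aA } \rangle \, D(y_{1})\cdots D(y_{l})\cdot 1.
\]

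The central computation is the closed form
\[
D(y_{1})\cdots D(y_{l})\cdot 1 \;=\; \prod _{i<j}\Omega [-y_{i}M/y_{j}]\;\cdot \;\prod _{i=1}^{l}\omega \Omega [y_{i}^{-1}X],
\]
which I would establish by induction on $l$.  For $l=1$, the operator $(\omega \Omega [-yMX])^{\perp }$ applied to $1$ reduces to the constant term of $\omega \Omega [-yMX] = \sum _{k}e_{k}[-yMX]$, namely $1$, giving $D(y)\cdot 1 = \omega \Omega [y^{-1}X]$.  For the inductive step, the scalar $y$-factor from the inductive hypothesis commutes with $D(y_{1})$, so the core task is to compute $(\omega \Omega [-y_{1}MX])^{\perp }\cdot g$ for $g = \prod _{j=2}^{l}\omega \Omega [y_{j}^{-1}X] = \omega \Omega [(y_{2}^{-1}+\cdots +y_{l}^{-1})X]$.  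Using $\omega ^{2} = \mathrm{id}$ (so that $(\omega f)^{\perp } = \omega f^{\perp }\omega $) together with \eqref{e:Omega-perp}, this simplifies to $\omega \bigl(\Omega [-y_{1}M(y_{2}^{-1}+\cdots +y_{l}^{-1})]\,\Omega [(y_{2}^{-1}+\cdots +y_{l}^{-1})X]\bigr) = \prod _{j=2}^{l}\Omega [-y_{1}M/y_{j}]\cdot g$, completing the induction.

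Applying $\omega $ (which commutes with scalar $y$-coefficients and squares to the identity), extracting the coefficient of $y^{-\aA }$, and evaluating at $X = x_{1}+\cdots +x_{l}$ yields
\[
\omega (\zeta \cdot 1)(x_{1},\ldots ,x_{l}) = \langle y^{-\aA } \rangle \, \Omega [\overline{Y}X]\,\prod _{i<j}\frac{(1-y_{i}/y_{j})(1-qty_{i}/y_{j})}{(1-qy_{i}/y_{j})(1-ty_{i}/y_{j})},
\]
with $\overline{Y} = y_{1}^{-1}+\cdots +y_{l}^{-1}$.  To match this with $\Hbold _{q,t}(x^{\aA })_{\pol }$, expand the rational factor inside $\sigmabold $ in \eqref{e:Hqt} as a formal series $\sum _{\mu }c_{\mu }x^{\mu }$ and apply \eqref{e:Cauchy-Weyl3} termwise: $\Hbold _{q,t}(\phi )_{\pol } = \langle y^{0} \rangle \bigl(\sum _{\mu }c_{\mu }y^{\mu }\bigr)\Omega [\overline{Y}X]\prod _{i<j}(1-y_{i}/y_{j})$.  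For $\phi = x^{\aA }$ the sum evaluates to $y^{\aA }\prod _{i<j}(1-qty_{i}/y_{j})/((1-qy_{i}/y_{j})(1-ty_{i}/y_{j}))$, and the identity $\langle y^{0} \rangle y^{\aA }F(y) = \langle y^{-\aA } \rangle F(y)$ collapses the two expressions to the same rational function.

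For the ``moreover'' assertion, expand $\Omega [\overline{Y}X] = \sum _{\lambda }s_{\lambda }[\overline{Y}]\,s_{\lambda }(X)$ in the LHS formula for $\omega (\zeta \cdot 1)(X)$.  Since the $y$-prefactor involves no $X$ and $\overline{Y}$ has only $l$ letters, $s_{\lambda }[\overline{Y}] = 0$ whenever $\ell(\lambda )>l$; hence only $s_{\lambda }(X)$ with $\ell(\lambda )\leq l$ occur in $\omega (\zeta \cdot 1)(X)$.  Such Schur functions are linearly independent upon restriction to $l$ variables, so the $l$-variable identity \eqref{e:H-formula} determines $\zeta \cdot 1$ uniquely.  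The main obstacle is managing the plethystic substitutions and the $\omega $-conjugations in the inductive commutation; conceptually the proof is a single iterated application of \eqref{e:Omega-perp} combined with the Cauchy--Weyl formula \eqref{e:Cauchy-Weyl3}.
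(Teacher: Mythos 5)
Your proof is correct and follows essentially the same route as the paper's: reduce to the monomial case $\phi = x^{\aA}$, compute the generating series $D(y_1)\cdots D(y_l)\cdot 1$ in closed form, and compare with $\Hbold_{q,t}(x^{\aA})_{\pol}$ via \eqref{e:Cauchy-Weyl3}. The only organizational difference is that the paper first straightens the full operator product $D(z_1)\cdots D(z_l)$ using the commutation identity \eqref{e:Omega-perp-Omega} and then applies it to $1$, whereas you apply the operators to $1$ one at a time by induction using \eqref{e:Omega-perp}; the two computations are substantively identical.
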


\begin{proof}
It suffices to consider the case when $\phi (x)=x^{\aA} =
x_{1}^{a_{1}}\cdots x_{l}^{a_{l}}$ and thus (by Proposition
\ref{prop:Db}) $\zeta$ acts on $\Lambda $ as the operator
$D_{a_{1}}\cdots D_{a_{l}}$.  To find $\zeta \cdot 1$, we use
\eqref{e:Omega-perp-Omega} to compute
\begin{equation}\label{e:Dz-straightened}
D(z_{1})\cdots D(z_{l}) =\bigl(\prod _{i<j} \Omega [-z_{i}/z_{j} M]
\bigr)\, (\omega \Omega [\overline{Z} X])^{\bullet }\, (\omega \Omega
[-Z M X])^{\perp },
\end{equation}
where $Z = z_{1}+\cdots +z_{l}$.  Acting on $1$, applying $\omega $,
and taking the coefficient of $z^{-\aA }$ gives
\begin{multline}\label{e:Dz-on-1}
\omega (\zeta \cdot 1)(X) = \langle z^{-\aA } \rangle \bigl(\prod _{i<j}
\Omega [-z_{i}/z_{j} M] \bigr)\, \Omega [\overline{Z} X] \\
= \langle z^{0} \rangle \bigl(z^{\aA }\prod _{i<j} \frac{1-q\, t\,
z_{i}/z_{j}}{(1-q \, z_{i}/z_{j})(1-t \, z_{i}/z_{j})} \bigr)\, \Omega
[\overline{Z} X] \, \prod _{i<j}(1-z_{i}/z_{j}).
\end{multline}
Since $Z$ has $l$ variables, this implies that all Schur functions $s_\lambda $ in $\omega (\zeta \cdot 1)(X)$ have $\ell(\lambda )\leq l$.
Identity \eqref{e:H-formula} for $\phi (x) = x^{\aA }$
follows by specializing $X$ to $x_1+\cdots +x_l$ and applying \eqref{e:Cauchy-Weyl3}.
\end{proof}

\subsection{Distinguished elements}
\label{ss:Negut-elements}

Given a rational function $\phi (x_{1},\ldots,x_{l})$, it may happen
that we have an identity of rational functions $\Hbold _{q,t}(\phi ) =
\Hbold _{q,t}(\eta )$ for some Laurent polynomial $\eta (x)$.  In this
case, $\Hbold _{q,t}(\phi )$ is the representative of the image of
$\eta$ in $S$, or of $\psi (\eta )\in \Ecal ^{+}$, even though $\phi
(x)$ is not necessarily a Laurent polynomial.  For the shuffle algebra
$S$ under consideration here, Negut \cite[Proposition 6.1]{Negut14}
showed that this happens for
\begin{equation}\label{e:Negut-phi}
\phi (x) = \frac{x_{1}^{b_{1}}\cdots x_{l}^{b_{l}}}{\prod
_{i=1}^{l-1}(1-q\, t\, x_{i}/x_{i+1})}.
\end{equation}
Accordingly, there are distinguished elements
\begin{equation}\label{e:Negut-element}
D_{b_{1},\ldots,b_{l}} = \psi (\eta )\in \Ecal ^{+},
\end{equation}
where $\psi \colon S \rightarrow \Ecal ^{+}$ is the isomorphism in
Proposition \ref{prop:shuffle-isomorphism} and $\eta (x)$ is any
Laurent polynomial such that $\Hbold _{q,t}(\phi) = \Hbold _{q,t}(\eta
)$ for the function $\phi $ in \eqref{e:Negut-phi}.

Negut identified certain of the elements $D_{b_{1},\ldots,b_{l}}$ as
(in our notation) ribbon skew Schur functions $s_{R}[-MX^{m,n}]$.  The
following result is a special case.

\begin{prop}[{\cite[Proposition 6.7]{Negut14}}] \label{prop:ek-vs-Db}
Let $m$, $k$
%$m,n,k$
be positive integers and $n$ any integer with $m,n$ coprime.

For $i=1,\ldots,km$, let $b_{i} = \lceil i n/m \rceil -\lceil (i-1) n/m
\rceil $; if $n \geq 0$, this is
%be 
the number of south steps at $x=i-1$ in the highest
south-east lattice path weakly below the line from $(0,kn)$ to
$(km,0)$.  Then
\begin{equation}\label{e:ek-vs-Db}
e_{k}[-M X^{m,n}] = D_{b_{1},\ldots,b_{km}}
\end{equation}
\end{prop}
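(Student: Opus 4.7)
The plan is to deduce Proposition \ref{prop:ek-vs-Db} directly from Negut's \cite[Proposition 6.7]{Negut14}, which identifies the shuffle algebra elements $D_{b_1,\ldots,b_l}$ arising from convex lattice paths as ribbon skew Schur functions $s_R[-M X^{m,n}]$ for a ribbon $R$ read off combinatorially from the path. Since $e_k = s_{(1^k)}$, the task reduces to (a) verifying that our sequence $b_i = \lceil in/m \rceil - \lceil (i-1)n/m \rceil$ is the one Negut associates to the highest lattice path under the segment from $(0,kn)$ to $(km,0)$, and (b) identifying the resulting ribbon as the single column $(1^k)$.

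For step (a), I would observe that the line from $(0,kn)$ to $(km,0)$ has equation $y = kn - (n/m)x$, so the highest south-east lattice path weakly below it passes through the lattice points $\{(i,\, kn - \lfloor in/m \rfloor) : 0 \le i \le km\}$. The number of south steps between $x = i-1$ and $x = i$ is therefore $\lfloor in/m \rfloor - \lfloor (i-1)n/m \rfloor$. Because $m,n$ are coprime with $n \geq 0$, $in/m$ is an integer only when $m \mid i$; a short case analysis (treating the columns where $m \mid i$ separately) shows that this increment agrees with $\lceil in/m \rceil - \lceil (i-1)n/m \rceil = b_i$, and in particular $\sum_{i=1}^{km} b_i = kn$, so the sequence is exactly the column-heights profile of the highest path.

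For step (b), Negut's recipe constructs the ribbon $R$ from the sequence $(b_1,\ldots,b_{km})$ by placing a ribbon corner between positions $i$ and $i+1$ at each lattice point where the path meets the bounding line $y = kn - (n/m)x$; equivalently, wherever $\lceil in/m\rceil - in/m = 0$. Since $m,n$ are coprime, this happens exactly at the $k$ values $i = m, 2m, \ldots, km$ of $i$. The resulting ribbon has $k$ boxes joined so that each box lies strictly north of the previous, i.e., a single column of height $k$; hence $R = (1^k)$ and $s_R = e_k$. Combining (a) and (b) with Negut's proposition yields $D_{b_1,\ldots,b_{km}} = s_{(1^k)}[-MX^{m,n}] = e_k[-MX^{m,n}]$.

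The main obstacle is simply the bookkeeping in step (b): one must carefully reconcile Negut's conventions (ordering of variables in the shuffle presentation, direction in which the ribbon is read, placement of corners at line-crossings versus at path turning points) with ours to be sure that the ribbon produced is indeed the column $(1^k)$ rather than, say, a horizontal strip or another ribbon of the same content. Once the conventions are aligned, the geometric regularity of the highest path---its corners coming in a perfectly periodic pattern of $k$ equally spaced "closest approaches" to the bounding line---makes the identification of the ribbon immediate.
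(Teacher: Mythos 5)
Your overall strategy---deducing the proposition from Negut's Proposition 6.7, since the paper itself gives no independent proof but simply attributes the result to \cite[Proposition 6.7]{Negut14}---is the correct one. However, step (a) contains a floor/ceiling error that invalidates the claimed reconciliation. The highest lattice point weakly below the line at $x=i$ has $y$-coordinate $\lfloor kn - in/m\rfloor = kn - \lceil in/m\rceil$ (using $kn\in\ZZ$), \emph{not} $kn-\lfloor in/m\rfloor$; the latter point lies strictly above the line whenever $m\nmid i$. Consequently the south-step count $\lfloor in/m\rfloor - \lfloor(i-1)n/m\rfloor$ you write down is in general not equal to $\lceil in/m\rceil - \lceil(i-1)n/m\rceil = b_i$: the two quantities differ by $\pm 1$ whenever exactly one of $i$, $i-1$ is a multiple of $m$, which for $m\geq 2$ happens in many columns (take $m=2$, $n=1$, $i=1$, giving $0$ versus $1$). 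So the ``short case analysis'' you invoke cannot succeed as stated. Once the correct ceiling appears, no case analysis is needed: the path height drops from $kn-\lceil(i-1)n/m\rceil$ to $kn-\lceil in/m\rceil$, giving exactly $b_i$. A secondary gap: the proposition asserts \eqref{e:ek-vs-Db} for any integer $n$ coprime to $m$, including $n\leq 0$, whereas your purely geometric argument only makes sense for $n\geq 0$.

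Step (b) is sound in outline. You correctly observe that, since $\gcd(m,n)=1$, the segment from $(0,kn)$ to $(km,0)$ meets $\ZZ^2$ at exactly the $k+1$ points $(jm,(k-j)n)$, $j=0,\dots,k$, which divides the highest path into $k$ arcs corresponding to $k$ ribbon boxes. But, as you yourself acknowledge, you have not actually verified that Negut's convention produces the column ribbon $(1^k)$ rather than the row $(k)$ or another connected skew shape with $k$ boxes. That convention-check is precisely the substance of specializing Negut's proposition, so leaving it as ``the bookkeeping is immediate once conventions are aligned'' means the proposal is a plan for a proof rather than a proof.
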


\begin{lemma}\label{lem:trailing-zero}
For any indices $b_{1},\ldots,b_{l}$ we have
\begin{equation}\label{e:trailing-zero}
D_{b_{1},\ldots,b_{l},0} \cdot 1 = D_{b_{1},\ldots,b_{l}} \cdot 1.
\end{equation}
\end{lemma}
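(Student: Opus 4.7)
The plan is to convert both sides, via Proposition~\ref{prop:H-formula}, to constant-term expressions in auxiliary $z$-variables and then to integrate out the extra variable $z_{l+1}$.

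Setting $\bb' = (b_1,\ldots,b_l,0)$, I would first invoke the constant-term formula implicit in the proof of Proposition~\ref{prop:H-formula}: for an arbitrary alphabet $X$,
\[
\omega(D_{\bb,0}\cdot 1)(X) = \langle z^0\rangle\, \frac{z^{\bb'} \prod_{i+1<j\leq l+1}(1-q\,t\, z_i/z_j)}{\prod_{i<j\leq l+1}(1-q\, z_i/z_j)(1-t\, z_i/z_j)}\, \Omega[\overline{Z}X] \prod_{i<j\leq l+1}(1-z_i/z_j),
\]
obtained by applying \eqref{e:Dz-on-1} to a Laurent polynomial representative of $\phi_{\bb'} = z^{\bb'}/\prod_{i=1}^l(1-q\,t\, z_i/z_{i+1})$ and cancelling the $(1-q\,t\,z_i/z_{i+1})$ factors against those in the $\Hbold_{q,t}$-numerator. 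Here $\overline{Z} = z_1^{-1}+\cdots+z_{l+1}^{-1}$, and the denominator factors are expanded as geometric series per \eqref{e:Hq}. The analogous $l$-variable expression (in $z_1,\ldots,z_l$) gives $\omega(D_\bb\cdot 1)(X)$. Since evaluating a symmetric function at any alphabet determines it as an element of $\Lambda$, it suffices to show the two constant-term expressions agree.

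For the main step, I would gather all $z_{l+1}$-dependent factors. Using $\Omega[\overline{Z}X] = \Omega[\overline{Z}_l X]\,\Omega[z_{l+1}^{-1}X]$ with $\overline{Z}_l = z_1^{-1}+\cdots+z_l^{-1}$, the $(l+1)$-variable integrand factors as $G_l(z_1,\ldots,z_l;X) \cdot G_{l+1}(z_1,\ldots,z_{l+1};X)$, where $G_l$ is precisely the $l$-variable integrand for $\omega(D_\bb\cdot 1)(X)$ and
\[
G_{l+1} = \frac{\prod_{i=1}^{l-1}(1-q\,t\, z_i/z_{l+1})}{\prod_{i=1}^l (1-q\, z_i/z_{l+1})(1-t\, z_i/z_{l+1})}\, \Omega[z_{l+1}^{-1}X]\, \prod_{i=1}^l (1-z_i/z_{l+1}).
\]
Expanded in nonnegative powers of $z_{l+1}^{-1}$, every factor of $G_{l+1}$ equals $1$ at $z_{l+1}^{-1}=0$, so $\langle z_{l+1}^0\rangle G_{l+1} = 1$. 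Taking iterated constant terms then yields $\omega(D_{\bb,0}\cdot 1)(X) = \omega(D_\bb\cdot 1)(X)$, and applying $\omega$ completes the proof.

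The main point requiring care will be the consistent use of the geometric-series expansion convention from \eqref{e:Hq}: this ensures that the iterated constant term agrees with the simultaneous constant term, and that $G_{l+1}$ genuinely is a power series in $z_{l+1}^{-1}$ whose constant term can be read off by setting $z_{l+1}^{-1}=0$.
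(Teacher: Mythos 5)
Your proposal is correct, and it uses the same central idea as the paper's proof (integrate out the ``extra'' variable associated with the trailing zero), but executes it at the level of the $z$-variable constant-term formula \eqref{e:Dz-on-1} rather than at the level of $\Hbold_{q,t}(\cdot)_{\pol}$ as the paper does. The paper works directly from the statement of Proposition~\ref{prop:H-formula}: writing out both sides via \eqref{e:H-formula}, it observes that the lone geometric-series factor $1/(1-q\,t\,x_{l}/x_{l+1})$ only contributes its constant term after $(\cdot)_{\pol}$ is applied, since all other factors are independent of $x_{l+1}$, and then argues that the resulting $(l{+}1)$-variable expression has Schur support of length $\leq l$ and agrees in $l$ variables with the right-hand side. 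Your version instead passes to the contour integrand of \eqref{e:Dz-on-1}, substitutes the rational representative $\phi_{\bb'}$ for a Laurent polynomial representative of the Negut element, and factors off \emph{all} $z_{l+1}$-dependence as a power series $G_{l+1}$ in $z_{l+1}^{-1}$ with constant term $1$. This makes the reduction in the number of variables (hence the Schur support bound) completely transparent, which the paper's version leaves slightly implicit. The tradeoff is that you rely on an extra step the paper does not need: replacing the Laurent polynomial $\eta$ by the rational function $\phi_{\bb'}$ inside $\langle z^0\rangle(\cdot)\Delta(z)\Omega[\overline{Z}X]$. This replacement is valid because the constant-term expression depends only on $\Hbold_{q,t}(\eta)$, a fact that follows from the argument around \eqref{e:Cauchy-Weyl3} and \eqref{e:Dz-on-1} in the proof of Proposition~\ref{prop:H-formula}, but your write-up states it without spelling out the justification (``obtained by applying \eqref{e:Dz-on-1} \ldots and cancelling''); this is a fillable gap, not an error.
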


\begin{proof}
Using Proposition \ref{prop:H-formula}, each side of
\eqref{e:trailing-zero} is characterized by its evaluation
\begin{gather}\label{e:trailing-zero-RHS}
\omega (D_{b_{1},\ldots,b_{l}} \cdot 1)(x_{1},\ldots,x_{l}) = \Hbold
^{(l)}_{q,t}\Bigl(\frac{x_{1}^{b_{1}}\cdots x_{l}^{b_{l}}}{\prod
_{i=1}^{l-1}(1 - q\, t\, x_{i}/x_{i+1})}\Bigr)_{\pol }\\
\label{e:trailing-zero-LHS}
\omega (D_{b_{1},\ldots,b_{l},0} \cdot 1)(x_{1},\ldots,x_{l+1}) =
\Hbold ^{(l+1)}_{q,t}\Bigl(\frac{x_{1}^{b_{1}}\cdots x_{l}^{b_{l}}}{(1
- q\, t\, x_{l}/x_{l+1})\prod _{i=1}^{l-1}(1 - q\, t\,
x_{i}/x_{i+1})}\Bigr)_{\pol }.
\end{gather}
Terms with a negative exponent of $x_{l+1}$ inside the parenthesis in
\eqref{e:trailing-zero-LHS} contribute zero after we apply $\Hbold
_{q,t}(-)_{\pol }$. We can therefore drop all but the constant term of
the geometric series factor $1/(1 - q\, t\, x_{l}/x_{l+1})$, since the
other factors are independent of $x_{l+1}$.  This shows that the right
hand side of \eqref{e:trailing-zero-LHS} is the same as in
\eqref{e:trailing-zero-RHS}, except that it has $\Hbold
^{(l+1)}_{q,t}$ in place of $\Hbold ^{(l)}_{q,t}$.  

It follows that $\omega (D_{b_{1},\ldots,b_{l},0} \cdot 1)$ is a
linear combination of Schur functions $s_{\lambda }(X)$ with
$\ell(\lambda )\leq l$, and that $\omega (D_{b_{1},\ldots,b_{l},0} \cdot
1)$ and $\omega (D_{b_{1},\ldots,b_{l}} \cdot 1)$ evaluate to the same
symmetric function in $l$ variables. Hence, they are identical.
\end{proof}

\subsection{Summary}
\label{ss:schiffmann-summary}

Most of what we use from this section in other parts of the paper can
be summarized as follows.

\begin{defn}\label{def:negut-catalanimal} Given $\bb =
(b_{1},\ldots,b_{l})\in \ZZ ^{l}$, the infinite series of $\GL _{l}$
characters $\Hcat _{\bb }(x) = \Hcat
_{b_{1},\ldots,b_{l}}(x_{1},\ldots,x_{l})$ is defined by
\begin{equation}\label{e:Hb}
\Hcat _{\bb }(x) = \Hbold _{q,t}\Bigl(\frac{x^{\bb }}{\prod
_{i=1}^{l-1}(1-q\, t\, x_{i}/x_{i+1})}\Bigr) = \Hbold _{q}\Bigl(
x^{\bb }\, \frac{\prod _{i+1<j}(1-q\, t\, x_{i}/x_{j})}{\prod
_{i<j}(1-t\, x_{i}/x_{j})} \Bigr),
\end{equation}
where $\Hbold _{q,t}$ is given by \eqref{e:Hqt} and $\Hbold _{q}$ by
\eqref{e:Hq}; or where $\Hcat _{\bb }(x)$ is given in fully expanded
form by \eqref{e:Hb-pre}.
\end{defn}

In terms of this definition, we have the following special case of
Proposition \ref{prop:H-formula}, which was stated as identity
\eqref{e:Db-Hb-pre} in the introduction.

\begin{cor}\label{cor:Db-Hb} For the Negut element $D_{\bb }\in \Ecal
$ acting on $\Lambda $, the symmetric function $\omega (D_{\bb }\cdot
1)$ evaluated in $l$ variables is given by
\begin{equation}\label{e:Db-Hb}
\omega (D_{\bb } \cdot 1)(x_{1},\ldots,x_{l}) = \Hcat _{\bb }(x)_{\pol
}\,.
\end{equation}
Moreover, all terms $s_{\lambda }$ in the Schur expansion of $\omega
(D_{\bb } \cdot 1)(X)$ have $\ell(\lambda )\leq l$, so $\omega (D_{\bb }
\cdot 1)$ is determined by its evaluation in $l$ variables.
\end{cor}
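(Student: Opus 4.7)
The plan is to reduce the statement directly to Proposition~\ref{prop:H-formula}, using the definition of the Negut element $D_{\bb}$ together with the two equivalent formulas for $\Hcat_{\bb}(x)$ given in Definition~\ref{def:negut-catalanimal}. By \eqref{e:Negut-element}, $D_{\bb} = \psi(\eta)$ for any Laurent polynomial $\eta(x) \in \kk[x_1^{\pm 1}, \ldots, x_l^{\pm 1}]$ satisfying the identity
\[
\Hbold_{q,t}(\eta) = \Hbold_{q,t}\Bigl(\frac{x_1^{b_1}\cdots x_l^{b_l}}{\prod_{i=1}^{l-1}(1-q\,t\,x_i/x_{i+1})}\Bigr),
\]
the existence of which is guaranteed by Negut's result cited before \eqref{e:Negut-element}. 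Applying Proposition~\ref{prop:H-formula} to this $\eta$ gives
\[
\omega(D_{\bb} \cdot 1)(x_1,\ldots,x_l) = \Hbold_{q,t}(\eta)_{\pol},
\]
and the right-hand side equals $\Hbold_{q,t}(x^{\bb}/\prod_{i=1}^{l-1}(1-q\,t\,x_i/x_{i+1}))_{\pol}$ by the defining property of $\eta$, which is precisely $\Hcat_{\bb}(x)_{\pol}$ by the first equality in \eqref{e:Hb}.

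For this argument to be internally consistent, I would also verify that the three forms of $\Hcat_{\bb}(x)$ in Definition~\ref{def:negut-catalanimal} agree. The key observation is that the product $\prod_{i<j}(1-q\,t\,x_i/x_j)$ appearing in the definition \eqref{e:Hqt} of $\Hbold_{q,t}$ contains the subproduct $\prod_{i=1}^{l-1}(1-q\,t\,x_i/x_{i+1})$ (the $j=i+1$ terms), so plugging $\phi = x^{\bb}/\prod_{i=1}^{l-1}(1-q\,t\,x_i/x_{i+1})$ into \eqref{e:Hqt} cancels this factor and leaves $\prod_{i+1<j}(1-q\,t\,x_i/x_j)$ in the numerator. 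This recovers the middle expression in \eqref{e:Hb} after folding $\prod_{i<j}(1-q\,x_i/x_j)$ into the $\Hbold_q$ operator, and likewise matches the expanded Weyl-sum form \eqref{e:Hb-pre} after inserting the definition \eqref{e:Weyl-symmetrization} of $\sigmabold$. These are purely rational-function manipulations in $\kk(x_1,\ldots,x_l)$, interpreted as formal Laurent series in $q$ and $t$ in the manner explained after \eqref{e:Hq}.

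The length bound on the Schur expansion of $\omega(D_{\bb} \cdot 1)(X)$ then comes for free from the corresponding ``moreover'' clause of Proposition~\ref{prop:H-formula}: since $\omega(\zeta \cdot 1)(X) = \omega(D_{\bb} \cdot 1)(X)$ contains only Schur functions $s_{\lambda}$ with $\ell(\lambda) \leq l$, its evaluation in $l$ variables determines it uniquely. There is no genuine obstacle in this argument; it is entirely a matter of unpacking definitions and invoking the already-established Proposition~\ref{prop:H-formula}. The only subtlety worth flagging is the interpretation of the rational-function form of $\Hcat_{\bb}(x)$: one must remember that factors $1/(1-q\,x_i/x_j)$ and $1/(1-t\,x_i/x_j)$ are always to be expanded as geometric series in $q$ and $t$, so that $\Hcat_{\bb}(x)$ is a well-defined formal Laurent series of virtual $\GL_l$ characters whose polynomial part is a legitimate symmetric formal power series, matching the output of $\omega(D_{\bb} \cdot 1)$ evaluated in $l$ variables.
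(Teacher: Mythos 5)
Your proof is correct and takes essentially the same route as the paper: Corollary~\ref{cor:Db-Hb} is stated there simply as ``the following special case of Proposition~\ref{prop:H-formula}'' with no separate proof, and your argument correctly unpacks why — namely that $D_{\bb}=\psi(\eta)$ for a Laurent polynomial $\eta$ with $\Hbold_{q,t}(\eta)=\Hbold_{q,t}(x^{\bb}/\prod_i(1-q\,t\,x_i/x_{i+1}))$, so Proposition~\ref{prop:H-formula} applied to $\eta$ gives both the identity and the $\ell(\lambda)\le l$ bound, with the rational-function cancellation showing this agrees with all three forms of $\Hcat_{\bb}$.
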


In the special cases where the index $\bb $ is the sequence of south
runs in the highest $(km,kn)$ Dyck path, $D_{\bb }\cdot 1$ can also be
expressed as follows.

\begin{cor}\label{cor:ek-vs-Db}
For $i = 1,\ldots,l=km+1$, let $b_{i}$ be the number of south steps at
$x = i-1$ in the highest south-east lattice path weakly below the
line from $(0,kn)$ to $(km,0)$, including $b_{l}=0$.  Then the Negut
element $D_{b_{1},\ldots,b_{l}}$ and the operator $e_{k}[-MX^{m,n}]$
agree when applied to $1\in \Lambda $, that is, we have
\begin{equation}\label{e:ek-vs-Db-1}
D_{b_{1},\ldots,b_{l}} \cdot 1 = e_{k}[-MX^{m,n}]\cdot 1.
\end{equation}
\end{cor}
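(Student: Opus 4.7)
The plan is to assemble the statement by combining two results that are already available, namely Proposition \ref{prop:ek-vs-Db} (Negut's identification of certain $D_{\bb }$ with $e_{k}[-MX^{m,n}]$ in $\Ecal ^{+}$) and Lemma \ref{lem:trailing-zero} (appending a trailing zero to $\bb $ does not change $D_{\bb }\cdot 1$). Nothing new needs to be computed; the corollary is essentially a bookkeeping step.

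First, I would reconcile the two conventions for the indices $b_{i}$. In Proposition \ref{prop:ek-vs-Db}, $b_{i} = \lceil in/m \rceil - \lceil (i-1)n/m \rceil $ for $i = 1,\ldots ,km$, and these are exactly the lengths of the south runs at $x = i-1$ in the highest south-east lattice path weakly below the segment from $(0,kn)$ to $(km,0)$. In Corollary \ref{cor:ek-vs-Db} the $b_{i}$ are defined by the same path description for $i=1,\ldots ,km$, and $b_{l} = b_{km+1} = 0$ because the path ends at $(km,0)$ and has no south steps on the line $x = km$. Thus $(b_{1},\ldots ,b_{km})$ in Corollary \ref{cor:ek-vs-Db} coincides with the tuple in Proposition \ref{prop:ek-vs-Db}, and $(b_{1},\ldots ,b_{l})$ is obtained from it by appending a single $0$.

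Next, by Proposition \ref{prop:ek-vs-Db}, we have the equality
\[
e_{k}[-MX^{m,n}] = D_{b_{1},\ldots ,b_{km}}
\]
as elements of $\Ecal ^{+}$, and hence as operators on $\Lambda $ under the action of Proposition \ref{prop:E-action}. Applying both sides to $1\in \Lambda $ gives
\[
e_{k}[-MX^{m,n}]\cdot 1 = D_{b_{1},\ldots ,b_{km}}\cdot 1.
\]
Finally, Lemma \ref{lem:trailing-zero} with the tuple $(b_{1},\ldots ,b_{km})$ yields
\[
D_{b_{1},\ldots ,b_{km},0}\cdot 1 = D_{b_{1},\ldots ,b_{km}}\cdot 1,
\]
and since $l = km+1$ and $b_{l} = 0$, the left hand side is precisely $D_{b_{1},\ldots ,b_{l}}\cdot 1$. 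Combining the two equalities gives \eqref{e:ek-vs-Db-1}.

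There is no real obstacle here; the only subtlety is matching the indexing conventions, and in particular observing that in Corollary \ref{cor:ek-vs-Db} the tuple is deliberately padded with a trailing zero so that its length $l = km+1$ equals $\lfloor r \rfloor + 1$ for $r$ slightly larger than $km$, aligning with the variable count in \eqref{e:Db-Hb} and \eqref{e:main-G-pre}. Lemma \ref{lem:trailing-zero} is exactly what allows this reindexing without changing the symmetric function produced.
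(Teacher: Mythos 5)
Your proposal is correct and matches the paper's argument exactly: the paper proves this corollary with the one-line remark that it is ``immediate from Proposition \ref{prop:ek-vs-Db} and Lemma \ref{lem:trailing-zero},'' and your write-up simply spells out the bookkeeping of indices (the tuple in the proposition has length $km$, the corollary appends a trailing zero to reach length $l=km+1$, and the lemma says this does not change the result of applying the operator to $1$).
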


\begin{proof}
Immediate from Proposition \ref{prop:ek-vs-Db} and Lemma
\ref{lem:trailing-zero}.
\end{proof}

\begin{cor}[also proven in \cite{BeGaSeXi16}]\label{cor:nabla-ek}
In the case $n=1$ of \eqref{e:ek-vs-Db-1}, we further have
\begin{equation}\label{e:nabla-ek}
\nabla ^{m} e_{k}(X) = e_{k}[-MX^{m,1}]\cdot 1.
\end{equation}
\end{cor}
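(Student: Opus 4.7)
The plan is to reduce to the base case $m=0$ by exploiting the $\nabla$-symmetry of the $\Ecal$-action supplied by Lemma \ref{lem:nabla-sym}.

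First, I would compute $e_{k}[-M X^{0,1}]\cdot 1$ directly from Proposition \ref{prop:E-action}(iii). The subalgebra $\Lambda(X^{0,1})\subseteq \Ecal$ acts on $\Lambda$ by $f(X^{0,1})\mapsto f[-X/M]^{\bullet}$. Viewing $e_{k}[-MX^{0,1}]$ as $g(X^{0,1})$ for $g = e_{k}[-M\bullet]\in \Lambda$, and using that plethystic substitution is a homomorphism, this element acts as multiplication by
\begin{equation*}
g[-X/M] \;=\; e_{k}\!\bigl[-M\cdot (-X/M)\bigr] \;=\; e_{k}(X).
\end{equation*}
Hence $e_{k}[-MX^{0,1}]\cdot 1 = e_{k}$.

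Next, I would iterate Lemma \ref{lem:nabla-sym} with $(m,n) = (0,1)$. The lemma gives $\nabla f(X^{0,1})\nabla^{-1} = f(X^{1,1})$, and applying it $m$ times yields, for any $f\in \Lambda$,
\begin{equation*}
\nabla^{m}\, f(X^{0,1})\, \nabla^{-m} \;=\; f(X^{m,1})
\end{equation*}
as operators on $\Lambda$. Specializing to $f = e_{k}[-M\bullet]$ gives
\begin{equation*}
\nabla^{m}\, e_{k}[-MX^{0,1}]\, \nabla^{-m} \;=\; e_{k}[-MX^{m,1}].
\end{equation*}

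Finally, I would observe that $\nabla\cdot 1 = 1$: the empty partition gives $\nabla \Htild_{\emptyset} = t^{0}q^{0}\Htild_{\emptyset} = 1$ from \eqref{e:nabla}, so $\nabla^{\pm m}\cdot 1 = 1$. Applying the conjugation identity to $1\in \Lambda$ and using the base-case computation gives
\begin{equation*}
e_{k}[-MX^{m,1}]\cdot 1 \;=\; \nabla^{m}\, e_{k}[-MX^{0,1}]\, \nabla^{-m}\cdot 1 \;=\; \nabla^{m}\, e_{k}[-MX^{0,1}]\cdot 1 \;=\; \nabla^{m} e_{k},
\end{equation*}
which is \eqref{e:nabla-ek}.

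There is no substantive obstacle: the whole argument is the composition of Proposition \ref{prop:E-action}(iii), Lemma \ref{lem:nabla-sym}, and the trivial action of $\nabla$ on $1$. The only point worth checking is that $e_{k}[-M\bullet]$ is a legitimate element of $\Lambda$ to feed into Lemma \ref{lem:nabla-sym}, which it is, since plethystic substitution produces ordinary symmetric functions.
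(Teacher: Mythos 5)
Your proposal is correct and takes essentially the same approach as the paper: compute $e_{k}[-MX^{0,1}]\cdot 1 = e_{k}$ from Proposition~\ref{prop:E-action}(iii), iterate the $\nabla$-conjugation symmetry of Lemma~\ref{lem:nabla-sym}, and use $\nabla\cdot 1 = 1$. You have simply spelled out the steps that the paper leaves compressed into two sentences.
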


\begin{proof}
By Proposition \ref{prop:E-action} (iii), $e_{k}[-MX^{0,1}]\cdot 1 =
e_{k}(X)$.  Since $\nabla (1) = 1$, the result now follows from Lemma
\ref{lem:nabla-sym}.
\end{proof}

\begin{remark}\label{rem:nabla-formula}
Equations \eqref{e:Hqt}, \eqref{e:trailing-zero}, \eqref{e:Hb},
\eqref{e:Db-Hb}, \eqref{e:ek-vs-Db-1}, and \eqref{e:nabla-ek} imply
the raising operator formula
\begin{equation}\label{e:nabla-formula}
(\omega\nabla^m e_{k+1})(x_1,\dots,x_l) = \sigmabold \left(\frac{x_1
x_{m+1} x_{2m+1}\cdots x_{km+1} \, \prod _{i+1<j} (1 - q\, t\,
x_{i}/x_{j})}{\prod _{i<j} ((1 - q\, x_{i}/x_{j}) (1 -
t\,x_{i}/x_{j}))} \right)_{\pol},
\end{equation}
provided $l \geq km+1$.
\end{remark}

\section{LLT polynomials} \label{s:LLT}

In this section we review the definition of the combinatorial LLT
polynomials $\Gcal _{\nu }(X;q)$, using the attacking inversions
formulation from \cite{HaHaLoReUl05}, which is better suited to our
purposes than the original ribbon tableau formulation in
\cite{LaLeTh97}.

We also define and prove some results on the infinite LLT series
$\Lcal _{\beta /\alpha }(x;q)$ introduced in \cite{GrojHaim07}.  Since
\cite{GrojHaim07} is unpublished, due for revision, and doesn't cover
the `twisted' variants $\Lcal ^{\sigma }_{\beta /\alpha }(x;q)$, we
give here a self-contained treatment of the material we need.

\subsection{Combinatorial LLT polynomials}
\label{ss:G-nu}

The {\em content} of a box $a=(i,j)$ in row $j$, column $i$ of any
skew 
diagram is $c(a)=i-j$.

Let $\nu = (\nu ^{(1)},\ldots,\nu ^{(k)})$ be a tuple of skew
% Young
diagrams.  When referring to boxes of $\nu $, we identify $\nu $ with
the disjoint union of the $\nu ^{(i)}$.  Fix $\epsilon >0$ small
enough that $k\epsilon <1$.  The {\em adjusted content} of a box $a\in
\nu ^{(i)}$ is $\ctild (a) = c(a)+i\epsilon $.  A {\em reading order}
is any total ordering of the boxes $a\in \nu $ on which $\ctild (a)$
is increasing.  In other words, the reading order is lexicographic,
first by content, then by the index $i$ for which $a\in \nu ^{(i)}$,
with boxes of the same content in the same component $\nu ^{(i)}$
ordered arbitrarily.

Boxes $a,b\in \nu $ {\em attack} each other if $0<|\ctild (a)-\ctild
(b)|<1$.  If $a\in \nu ^{(i)}$ precedes $b\in \nu ^{(j)}$ in the
reading order, the attacking condition means that either $c(a)=c(b)$
and $i<j$, or $c(b)=c(a)+1$ and $i>j$.  We also say that $a,b$ form an
{\em attacking pair} in $\nu $.

By a semistandard Young tableau on the tuple $\nu $ we mean a map
$T\colon \nu \rightarrow \ZZ _{+}$ which restricts to a semistandard
tableau on each component $\nu ^{(i)}$.  We write $\SSYT (\nu )$ for
the set of these.  The {\em weight} of $T\in \SSYT (\nu )$ is $x^{T} =
\prod _{a\in \nu }x_{T(a)}$.  An {\em attacking inversion} in $T$ is
an attacking pair $a,b$ such that $T(a)>T(b)$, where $a$ precedes $b$
in the reading order.  We define $\inv (T)$ to be the number of
attacking inversions in $T$.

\begin{defn}\label{def:G-nu}
The {\em combinatorial LLT polynomial} indexed by a tuple of skew
%Young
diagrams $\nu $ is the generating function
\begin{equation}\label{e:G-nu}
\Gcal_{\nu }(X;q) = \sum _{T\in \SSYT (\nu )}q^{\inv (T)}x^{T}.
\end{equation}
\end{defn}
In \cite{HaHaLoReUl05} it was shown that $\Gcal_{\nu }(X;q^{-1})$
coincides up to a factor $q^{e}$ with a ribbon tableau LLT polynomial
as defined in \cite{LaLeTh97}, and is therefore a symmetric function.
A direct and more elementary proof that $\Gcal_{\nu }(X;q)$ is symmetric
was given in \cite{HaHaLo05}.

It is useful in working with the LLT polynomials $\Gcal _{\nu }(X;q)$
to consider a more general combinatorial formalism, as in \cite[\S
10]{HaHaLo05}.  Let $\Acal = \Acal _{+} \coprod \Acal _{-}$ be a
`signed' alphabet with a {\em positive} letter $v\in \Acal _{+}$ and a
{\em negative} letter $\overline{v}\in \Acal _{-}$ for each $v\in \ZZ
_{+}$, and an arbitrary total ordering on $\Acal $.
 
A {\em super tableau} on a tuple of skew shapes $\nu $ is a map
$T\colon \nu \rightarrow \Acal $, weakly increasing along rows and
columns, with positive letters increasing strictly on columns and
negative letters increasing strictly on rows.  A usual semistandard
tableau is thus the same thing as a super tableau with all entries
positive.  Let $\SSYT _{\pm }(\nu )$ denote the set of super tableaux
on $\nu $.

An attacking inversion in a super tableau is an attacking pair $a,b$,
with $a$ preceding $b$ in the reading order, such that either
$T(a)>T(b)$ in the ordering on $\Acal $, or $T(a)=T(b) = \overline{v}$
with $\overline{v}$ negative.  As before, $\inv (T)$ denotes the
number of attacking inversions.

\begin{lemma}[{\cite[(81--82) and Proposition 4.2]{HaHaLo05}}]
\label{lem:super-G}
We have the identity
\begin{equation}\label{e:super-G}
\omega _{Y} \Gcal _{\nu }[X+Y;q] = \sum _{T\in \SSYT _{\pm }(\nu )}
q^{\inv (T)} x^{T_{+}} y^{T_{-}},
\end{equation}
where the weight is given by
\begin{equation}\label{e:super-weight}
x^{T_{+}} y^{T_{-}} = \prod _{a\in \nu }\begin{cases}
x_{i},&	    T(a) = i\in  \Acal _{+},\\
y_{i},&	    T(a) = \overline{i} \in \Acal _{-}.
\end{cases}
\end{equation}
This holds for any choice of the ordering on the signed alphabet
$\Acal $.
\end{lemma}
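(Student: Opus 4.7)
The plan is to establish \eqref{e:super-G} in two stages: first derive the identity for one particular ordering of the signed alphabet $\Acal$, and then prove that the resulting super-tableau generating function is independent of the chosen ordering. Throughout I would fix initially a convenient ordering in which all positive letters precede all negative letters, with both blocks in their natural order; this simplifies the bookkeeping for super semistandardness.

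For the first stage, use the fact that $\Gcal_\nu(X;q)$ is symmetric so that plethystic substitution yields
\[
\Gcal_\nu[X+Y;q] = \Gcal_\nu(x_1,x_2,\ldots,y_1,y_2,\ldots;q) = \sum_T q^{\inv(T)}\, z^T,
\]
the sum running over ordinary semistandard tableaux $T$ of shape $\nu$ with entries in the combined positive alphabet (with the $x$'s preceding the $y$'s). I would then apply $\omega_Y$ using the Schur decomposition $\Gcal_\nu = \sum_\lambda \tilde K_{\lambda,\nu}(q)\, s_\lambda$ and the coproduct expansion $s_\lambda[X+Y] = \sum_\mu s_{\lambda/\mu}(X)\, s_\mu(Y)$, so that $\omega_Y$ acts by $s_\mu(Y) \mapsto s_{\mu^*}(Y)$. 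Reassembling the result produces a sum over pairs $(T_+,T_-)$ where $T_+$ is an ordinary semistandard tableau on a sub-tuple of $\nu$ and $T_-$, coming from $s_{\mu^*}(Y)$, is a filling of the complementary boxes that is strict on rows and weak on columns---exactly the condition for negative letters in a super tableau.

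Next I would check that the weighting matches. The attacking-inversion statistic $\inv(T)$ on the combined tableau decomposes along the cut between $T_+$ and $T_-$; the key point is that the ``extra'' inversion contributed by equal negative letters at attacking positions is precisely the correction required by the passage $s_\mu(Y) \mapsto s_{\mu^*}(Y)$ inside the inversion generating function. This is the main technical obstacle: it requires a careful accounting, in the spirit of the super Cauchy identity, relating transpose-semistandardness on skew shapes to inversion counts on attacking pairs. I would argue it by induction on the number of negative letters, handling one negative color at a time, and reduce to the single-color case treated in \cite{HaHaLo05}.

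The final stage is to remove the dependence on the ordering of $\Acal$. It suffices to show that swapping two adjacent letters $u < v$ in $\Acal$ preserves the right-hand side of \eqref{e:super-G}. I would construct a weight-preserving, $\inv$-preserving involution on $\SSYT_\pm(\nu)$ that acts nontrivially only on the entries valued in $\{u,v\}$, in the style of the Bender--Knuth involution used in \cite{HaHaLo05} to establish symmetry of $\Gcal_\nu$. The cases where $u,v$ have the same sign reduce directly to the ordinary case, while the mixed-sign case is where the modified attacking-inversion rule plays a role and is the most delicate step; verifying $\inv$-preservation there, together with the matching behavior on equal repeated negatives, completes the argument.
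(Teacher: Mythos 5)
Your plan to expand $\Gcal_\nu$ in the Schur basis, apply $\omega_Y$ to each $s_\mu(Y)$, and then "reassemble" into super tableaux of shape $\nu$ has a structural gap rather than just a technical one. After writing $\Gcal_\nu = \sum_\lambda \tilde K_{\lambda,\nu}(q)\, s_\lambda$ and transposing the $Y$-part, you are left with sums of products $s_{\lambda/\mu}(X)\, s_{\mu^*}(Y)$ weighted by $\tilde K_{\lambda,\nu}(q)$; nothing in this expression refers to the diagram $\nu$ anymore, and to push the data back onto a filling of $\nu$ with the correct $q$-weights you would in effect need a tableau-theoretic decomposition of $\tilde K_{\lambda,\nu}(q)$ compatible with the transposition. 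That is essentially the LLT positivity problem, which is not available as an ingredient here, and which this lemma is supposed to be independent of. Your induction on the number of negative letters doesn't touch this: the difficulty is already present for one negative color, and the single-color case is not "treated in \cite{HaHaLo05}" in the form you need to bootstrap from.

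The cited source takes a genuinely different route that sidesteps the Schur basis entirely. There $\Gcal_\nu(X;q)$ is first expanded in Gessel fundamental quasi-symmetric functions, $\Gcal_\nu = \sum_{T} q^{\inv(T)} Q_{n,\mathrm{Des}(T)}(X)$ with $T$ ranging over standard tableaux on $\nu$, which is a purely combinatorial reading-word statement with no positivity input. The cited Proposition 4.2 gives the superization $\omega_Y Q_{n,D}[X+Y]$ in closed super-tableau form (valid for any ordering of $\Acal$), and the identity \eqref{e:super-G} then follows by summing over $T$ and regrouping super tableaux by their standardization. This decomposition buys exactly what yours lacks: it transfers cleanly across $\omega_Y$ because $Q$-functions, unlike Schur functions, keep track of the reading-word/descent data on $\nu$. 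Your final stage (Bender--Knuth-type involutions to remove dependence on the ordering of $\Acal$) does reflect a real ingredient in the source, but it is used there to prove well-definedness of the super quasi-symmetric functions at the $Q$-level, not to repair a Schur-level argument; and note that once the identity holds for one ordering, independence of ordering for the right-hand side is automatic since the left-hand side never mentions the ordering, so as written your stage 2 is redundant rather than load-bearing.
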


\begin{cor}\label{cor:omega-super} We have
\begin{equation}\label{e:omega-super}
\omega\, \Gcal _{\nu }(X;q) = \sum _{T\in \SSYT_{-}(\nu)} q^{\inv
(T)}x^{T},
\end{equation}
where the sum is over super tableaux $T$ with all entries negative,
and we abbreviate $x^{T_{-}}$ to $x^{T}$ in this case.
\end{cor}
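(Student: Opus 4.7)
The plan is to deduce the corollary directly from Lemma \ref{lem:super-G} by specializing the $X$ alphabet to the empty alphabet, i.e.\ by setting $x_i = 0$ for all $i$ in the identity
\[
\omega_Y \Gcal_\nu[X+Y;q] = \sum_{T \in \SSYT_\pm(\nu)} q^{\inv(T)} x^{T_+} y^{T_-}.
\]
On the left-hand side, this specialization turns $\Gcal_\nu[X+Y;q]$ into $\Gcal_\nu[Y;q]$, which, since $\Gcal_\nu$ is already known to be a symmetric function, is just the ordinary evaluation $\Gcal_\nu(Y;q)$. Hence the left-hand side becomes $\omega\, \Gcal_\nu(Y;q)$.

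On the right-hand side, the monomial $x^{T_+} y^{T_-}$ vanishes as soon as $T$ uses any positive letter, because every positive entry contributes a factor $x_i = 0$ to $x^{T_+}$. The terms that survive are precisely those indexed by super tableaux with all negative entries, that is, $T \in \SSYT_-(\nu)$, and each such term contributes $q^{\inv(T)} y^{T_-}$. After renaming $Y$ to $X$ and writing $x^T$ for $x^{T_-}$ as in the statement, we obtain the desired identity.

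No real obstacle arises: Lemma \ref{lem:super-G} holds for any total ordering on the signed alphabet $\Acal$, so the ordering plays no role here, and the definition of attacking inversions in a super tableau restricts correctly to the all-negative case (the $T(a) = T(b) = \overline{v}$ clause still applies). The whole argument is a one-line specialization, and the only thing to verify carefully is that $\Gcal_\nu[Y] = \Gcal_\nu(Y)$, which is just the statement that plethystic evaluation at a single alphabet is ordinary evaluation for symmetric functions.
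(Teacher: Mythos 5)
Your proof is correct and matches the route the paper implicitly intends: the corollary is stated with no proof immediately after Lemma \ref{lem:super-G}, since it is exactly the specialization $X \mapsto 0$ of that lemma, which you carry out carefully, including the check that plethystic and ordinary evaluation agree for the symmetric function $\Gcal_\nu$ and that only all-negative super tableaux survive.
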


\begin{prop}\label{prop:omega-G} Given a tuple of skew
% Young 
diagrams $\nu = (\nu ^{(1)},\ldots,\nu ^{(k)})$, let $\nu ^{R}= ((\nu
^{(1)})^{R},\ldots,(\nu ^{(k)})^{R})$, where $(\nu ^{(i)})^{R}$ is the
$180^{\circ }$ rotation of the transpose $(\nu ^{(i)})^{*}$,
positioned so that each box in $\nu ^{R}$ has the same content as the
corresponding box in $\nu $.  Then
\begin{equation}\label{e:omega-G}
\omega \, \Gcal _{\nu }(X;q) = q^{I(\nu )}\, \Gcal _{\nu
^{R}}(X;q^{-1}),
\end{equation}
where $I(\nu )$ is the total number of attacking pairs in $\nu $.
\end{prop}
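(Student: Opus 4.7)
My plan is to expand $\omega\Gcal_\nu(X;q)$ using the super-tableaux formula from Corollary \ref{cor:omega-super}, then construct a weight-preserving bijection between $\SSYT_-(\nu)$ and $\SSYT(\nu^R)$ under which the inversion statistic transforms as $\inv(T) \mapsto I(\nu) - \inv(T')$; the identity then follows term by term.

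The first step exploits the freedom, granted by Lemma \ref{lem:super-G}, to choose any total order on the signed alphabet $\Acal$. I would take the order $\overline{1} > \overline{2} > \overline{3} > \cdots$ on $\Acal_-$. Corollary \ref{cor:omega-super} then expresses $\omega\Gcal_\nu(X;q)$ as a sum over all-negative super tableaux $T$, where identifying $\overline{v}$ with $v$ turns the super-tableau conditions into: the $v$-values are strictly decreasing along each row (left to right) and weakly decreasing along each column (bottom to top). The bijection $T \mapsto T'$ sends $T(a) = \overline{v}$ to $T'(a') = v$, where $a' \in (\nu^{(i)})^R$ is the box corresponding to $a \in \nu^{(i)}$. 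The combined transpose-plus-$180^\circ$-rotation swaps rows and columns with reversal, so a row of $\nu$ read left-to-right becomes a column of $\nu^R$ read top-to-bottom, and a column of $\nu$ read bottom-to-top becomes a row of $\nu^R$ read right-to-left. Under this correspondence, the two conditions on $v$ translate exactly into the SSYT conditions on $T'$ (weakly increasing along rows, strictly increasing up columns), so the map is a bijection; weights match since $x^T = \prod_a x_{v(a)} = x^{T'}$.

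The heart of the proof is the inversion comparison. Because contents are preserved by construction, the adjusted content $\tilde{c}$ is preserved as well, so attacking pairs in $\nu$ correspond bijectively with attacking pairs in $\nu^R$ and the reading order transfers (for same-content, same-component boxes, which never attack, we simply fix compatible orderings on the two sides). For an attacking pair $a$ before $b$ with images $a',b'$, the pair is an attacking inversion in $T$ iff $T(a) > T(b)$ in $\Acal$ (equivalently $v_a < v_b$) or $T(a) = T(b)$ is negative (equivalently $v_a = v_b$), while the pair is an attacking inversion in $T'$ iff $v_a > v_b$. The conditions $v_a \leq v_b$ and $v_a > v_b$ are complementary, so each attacking pair contributes to exactly one of $\inv(T)$ and $\inv(T')$, giving $\inv(T) + \inv(T') = I(\nu)$. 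Putting everything together yields
\begin{equation*}
\omega\Gcal_\nu(X;q) = \sum_{T \in \SSYT_-(\nu)} q^{\inv(T)} x^T = q^{I(\nu)} \sum_{T' \in \SSYT(\nu^R)} q^{-\inv(T')} x^{T'} = q^{I(\nu)}\Gcal_{\nu^R}(X;q^{-1}).
\end{equation*}

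The main obstacle is the geometric bookkeeping: verifying that the transpose-plus-$180^\circ$-rotation (with positioning fixed by content-matching) really does convert the monotonicity conditions on $T$ into the SSYT conditions on $T'$, and that the choice of reverse ordering on $\Acal_-$ is precisely what makes the two inversion conditions complementary. Everything else is formal once these compatibilities are checked.
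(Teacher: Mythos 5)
Your proposal is correct and follows essentially the same route as the paper's proof: apply Corollary \ref{cor:omega-super} with the ordering $\overline{1}>\overline{2}>\cdots$, build the weight-preserving bijection $T\mapsto T^{R}$ by reflecting and switching negative letters to positive ones, and observe that for each attacking pair the inversion condition in $T$ (namely $v_a\leq v_b$, after accounting for the equal-negative-letter clause) is exactly the complement of the inversion condition in $T^R$, giving $\inv(T)+\inv(T^R)=I(\nu)$. You supply somewhat more geometric detail than the paper (tracking how rows and columns transform under transpose-plus-rotation, and noting that same-content same-component boxes never attack so the reading order transfers without ambiguity), but the underlying argument is the same.
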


\begin{proof}
Use Corollary \ref{cor:omega-super} on the left hand side, ordering
the negative letters as $\overline{1}>\overline{2}>\cdots $.  Given a
negative tableau $T$ on $\nu $, let $T^{R}$ be the tableau on $\nu
^{R}$ obtained by reflecting the tableau along with $\nu $ and
changing negative letters $\overline{v}$ to positive letters $v$.
Then $T^{R}$ is an ordinary semistandard tableau, and $T\mapsto T^{R}$
is a weight preserving bijection from negative tableaux on $\nu $ to
$\SSYT (\nu ^{R})$.  An attacking pair in $\nu $ is an inversion in
$T$ if and only if the corresponding attacking pair in $\nu ^{R}$ is a
non-inversion in $T^{R}$, hence $\inv (T^{R}) = I(\nu ) - \inv (T)$.
This implies \eqref{e:omega-G}.
\end{proof}

\begin{example}
Consider a tuple $\nu = (\nu ^{(1)},\ldots,\nu ^{(k)})$ in which each
skew shape is a column so that $\nu^R$ is a tuple of rows.  The super
tableau of shape $\nu^R$ with all entries a positive letter $1$ has no
inversions, whereas the distinguished tableau $T$ of shape $\nu$ with
all entries a negative letter $\overline{1}$ has
\begin{equation} \label{e:inv2Tnot}
\inv(T)=I(\nu),
\end{equation}
where $I(\nu)$ is the total number of attacking pairs in $\nu$.
\end{example}

\begin{lemma}\label{lem:finite-variables}
The LLT polynomial $\Gcal_{\nu }(X;q)$ is a linear combination of Schur
functions $s_{\lambda }(X)$ such that $\ell(\lambda )$ is bounded by the
total number of rows in the diagram $\nu$.
\end{lemma}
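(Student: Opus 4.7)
My plan is to dualize via the involution $\omega$: since $\omega\, s_\lambda = s_{\lambda^*}$, the claim is equivalent to showing that $\omega\, \Gcal_\nu(X;q)$ is a linear combination of $s_\mu$ with $\mu_1 \leq N$, where $N$ denotes the total number of rows of $\nu$. I would establish this in two steps: first a monomial-expansion bound coming from the negative super-tableau formula, then an upgrade to a Schur-expansion bound via Schur positivity.

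First I would invoke Corollary \ref{cor:omega-super}, which gives
\[
\omega\, \Gcal_\nu(X;q) \;=\; \sum_{T \in \SSYT_-(\nu)} q^{\inv(T)}\, x^T.
\]
Since the entries of a negative super tableau strictly increase along each row, any value $v$ appears in $T$ at most once per row of $\nu$ and hence at most $N$ times in all; the weight $\mu$ of $T$ therefore satisfies $\mu_v \leq N$ for every $v$, so $\mu_1 \leq N$. Writing $\omega\, \Gcal_\nu = \sum_\mu a_\mu(q)\, m_\mu$, this shows $a_\mu(q) = 0$ whenever $\mu_1 > N$.

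To convert this monomial bound into a Schur bound, I would invoke the $q$-Schur positivity of LLT polynomials (as in \cite{GrojHaim07}; cf.\ also \cite{HaHaLoReUl05}), which says $\omega\, \Gcal_\nu = \sum_\mu b_\mu(q)\, s_\mu$ with $b_\mu(q) \in \NN[q]$. The identity $a_\mu(q) = \sum_{\sigma \geq \mu} b_\sigma(q)\, K_{\sigma,\mu}$ expresses $a_\mu(q)$ as a sum of non-negative polynomials, one of whose summands is $b_\mu(q)\, K_{\mu,\mu} = b_\mu(q)$; hence $a_\mu(q) = 0$ forces $b_\mu(q) = 0$. Applying this to each $\mu$ with $\mu_1 > N$ completes the argument.

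The main obstacle is the appeal to Schur positivity, which is genuinely needed: the monomial bound alone does not imply the Schur-length bound, as illustrated by $m_{(2,1,1)} = s_{(4)} - s_{(3,1)} - s_{(2,2)} + s_{(2,1,1)}$, whose monomials have maximum exponent $2$ while its Schur expansion contains $s_{(4)}$. Schur positivity is precisely what rules out such cancellations and turns the elementary row-strictness observation into the desired length bound.
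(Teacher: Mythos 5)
Your first step---bounding the monomial weights of $\omega\,\Gcal_\nu$ via the negative-tableau formula from Corollary \ref{cor:omega-super}---is correct and is essentially the same content as the paper's, which instead uses the rotated-tableau formula (Proposition \ref{prop:omega-G}) to bound the weights of semistandard tableaux on $\nu^R$; both observations amount to ``each letter occurs at most once per row of $\nu$.'' Where you go astray is in the claimed ``main obstacle.'' Your purported counterexample is arithmetically wrong: the only partitions of $4$ dominated by $(2,1,1)$ are $(2,1,1)$ and $(1^4)$, so $s_{(2,1,1)} = m_{(2,1,1)} + 3\,m_{(1^4)}$ and hence
\[
m_{(2,1,1)} \;=\; s_{(2,1,1)} - 3\,s_{(1^4)},
\]
with no $s_{(4)}$ term. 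In fact no such counterexample exists: since $K_{\sigma,\mu}\neq 0$ forces $\mu\leq\sigma$ in dominance, hence $\mu_1\leq\sigma_1$, the change of basis between $\{s_\sigma : \sigma\vdash n,\ \sigma_1\leq N\}$ and $\{m_\mu : \mu\vdash n,\ \mu_1\leq N\}$ is unitriangular, so these two families span the same subspace of $\Lambda^n$. Therefore the monomial bound $\mu_1\leq N$ on $\omega\,\Gcal_\nu$ is automatically equivalent to the Schur bound $\sigma_1\leq N$, which is exactly the equivalence the paper invokes at the start of its proof.

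Consequently your invocation of $q$-Schur positivity of LLT polynomials, while it does yield a correct deduction, imports a far deeper theorem than needed to handle what is a purely linear-algebraic triangularity. The proof you wrote is not logically flawed (the positivity argument is sound in isolation), but it rests on a false premise about why positivity is required and should be replaced by the elementary Kostka-triangularity observation, which is what the paper actually uses.
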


\begin{proof} Let $r$ be the total number of rows in $\nu $.  It is
equivalent to show that $\omega \, \Gcal_{\nu }(X;q)$ is a linear
combination of monomial symmetric functions $m_{\lambda }(X)$ such
that $\lambda _{1}\leq r$.  By Proposition \ref{prop:omega-G}, $\omega
\, \Gcal_{\nu }(X;q)$ has a monomial term $q^{I(\nu )-\inv (T)} x^{T}$ for
each semistandard tableau $T\in \SSYT(\nu ^{R})$ on the tuple of
reflected shapes $\nu ^{R}$.  Since a letter can appear at most once
in each column of $T$, the exponents of $x^{T}$ are bounded by $r$.
\end{proof}

\subsection{Reminder on Hecke algebras} \label{ss:Hecke}

We recall, in the case of $\GL _{l}$, the Hecke algebra action on the
group algebra of the weight lattice, as in Lusztig \cite{Lusztig89} or
Macdonald \cite{Macdonald03} and due originally to Bernstein and
Zelevinsky.

For $\GL _{l}$, we identify the group algebra $\kk X$ of the weight
lattice $X = \ZZ ^{l}$ with the Laurent polynomial algebra $\kk
[x_{1}^{\pm 1},\ldots,x_{l}^{\pm 1}]$.  Here $\kk $ is any ground
field containing $\QQ (q)$.

The Demazure-Lusztig operators
\begin{equation}\label{e:Demazure-Lusztig}
T_{i} = q s_{i} + (1-q)\frac{1}{1-x_{i+1}/x_{i}} (s_{i}-1)
\end{equation}
for $i = 1,\ldots,l-1$ generate an action of the Hecke algebra 
of $S_{l}$ on $\kk [x_{1}^{\pm 1},\ldots,x_{l}^{\pm 1}]$.
We have normalized the generators so that the quadratic
relations are $(T_{i}-q)(T_{i}+1)=0$.  The elements $T_w$, defined by
$T_w = T_{i_1} T_{i_2}\cdots T_{i_m}$ for any reduced expression
$w=s_{i_1}s_{i_2}\cdots s_{i_m}$, form a $\kk$-basis of the Hecke
algebra, as $w$ ranges over $S_l$.

Let $R_{+}$ be the set of positive roots and $Q_{+} = \NN R_{+}$ the
cone they generate in the root lattice $Q$.  For dominant weights we
define $\lambda \leq \mu $ if $\mu -\lambda \in Q_{+}$.  For
polynomial weights of $\GL_{l}$, this coincides with the standard
partial ordering \eqref{e:dominance} on partitions.

For any weight $\lambda$, let $\lambda _{+}$ denote the dominant
weight in the orbit $S_{l}\cdot \lambda $.

Let $\conv (S_{l}\cdot \lambda )$ be the convex hull of the orbit
$S_{l}\cdot \lambda $ in the coset $\lambda +Q$ of the root lattice,
i.e., the set of weights that occur with non-zero multiplicity in the
irreducible character $\chi _{\lambda _{+}}$.  Note that $\conv
(S_{l}\cdot \lambda )\subseteq \conv (S_{l}\cdot \mu )$ if and only if
$\lambda _{+}\leq \mu _{+}$.

Each orbit $S_{l}\cdot \lambda _{+}$ has a partial ordering induced by
the Bruhat ordering on $S_{l}$.  More explicitly, this ordering is the
transitive closure of the relation $s_{i}\lambda >\lambda $ if
$\langle \alpha _{i}\chek ,\lambda \rangle>0$.  We extend this to a
partial ordering on all of $X = \ZZ ^{l}$ by defining $\lambda \leq
\mu $ if $\lambda _{+}<\mu _{+}$, or if $\lambda _{+} = \mu _{+} $ and
$\lambda \leq \mu $ in the Bruhat order on $S_{l}\cdot \lambda _{+}$.

Suppose $\langle \alpha
_{i}\chek ,\lambda \rangle \geq 0$.  If $\langle \alpha _{i}\chek
,\lambda \rangle = 0$, that is, if $\lambda =s_{i}\lambda $, then
\begin{equation}\label{e:Ti-triangularity-0}
T_{i}\, x^{\lambda } = q\, x^{\lambda }.
\end{equation}
Otherwise, if $\langle \alpha _{i}\chek ,\lambda \rangle >
0$, then
\begin{equation}\label{e:Ti-triagularity-1}
\begin{aligned}
T_{i}\, x^{\lambda } &	\equiv q\, x^{s_{i}\lambda } + (q-1)x^{\lambda},\\
T_{i}\, x^{s_{i}\lambda } &    \equiv x^{\lambda }
\end{aligned}
\end{equation}
modulo the space spanned by monomials $x^{\mu }$ for $\mu $ strictly
between $\lambda $ and $s_{i}\lambda $ on the root string $\lambda
+\ZZ \alpha _{i}$.  Note that $\mu <\lambda $ for these weights $\mu
$, since they lie on orbits strictly inside $\conv (S_{l}\cdot \lambda
)$.  Furthermore, the set of all weights
$\mu \leq s_{i} \lambda $ is
$s_{i}$-invariant and has convex intersection with every root string
$\nu +\ZZ \alpha _{i}$, hence the space $\kk \cdot \{x^{\mu }\mid \mu
\leq s_{i}\lambda \}$ is closed under $T_{i}$.  It follows that if $\langle
\alpha _{i}\chek, \lambda \rangle \ge 0$, then $T_{i}$ applied to any
Laurent polynomial of the form $x^{\lambda}+\sum_{\mu < \lambda}c_\mu
x^{\mu}$ yields a result of the form
\begin{equation}\label{e:Ti-triangularity-2}
T_{i}\, \bigl( x^{\lambda}+\sum_{\mu < \lambda}c_\mu x^{\mu} \bigr) =
q\, x^{s_{i}\lambda } + \sum_{\mu < s_i \lambda} d_\mu x^{\mu}\,.
\end{equation}

\subsection{Non-symmetric Hall-Littlewood polynomials}
\label{ss:non-symm-HL}

For each $\GL _{l}$ weight $\lambda \in \ZZ ^{l}$, we define the {\em
non-symmetric Hall-Littlewood polynomial}
\begin{equation}\label{e:E-lambda}
E_{\lambda }(x;q) = E_{\lambda }(x_1,\cdots x_l;q) = q^{-\ell(w)}T_{w} x^{\lambda _{+}},
\end{equation}
where $w\in S_{l}$ is such that $\lambda =w(\lambda _{+})$.  If
$\lambda $ has non-trivial stabilizer then $w$ is not unique, but it
follows from
\eqref{e:Ti-triangularity-0}--\eqref{e:Ti-triangularity-2} that
$E_{\lambda }(x;q)$ is independent of the choice of $w$ and has the
monic and triangular form
\begin{equation}\label{e:E-monic}
E_{\lambda }(x;q) = x^{\lambda } +\sum _{\mu <\lambda } c_{\mu }
x^{\mu }.
\end{equation}
See Figure \ref{fig:ns Hall-Littlewood} for examples.

\begin{figure}
\begin{align*}
E_{000}& = 1 && F_{000}= 1 \\
E_{100}& =  x_1 && F_{100}= y_1 \\
E_{010}& = (1-q)x_1 + x_2
&& F_{010} = y_2 \\
E_{001}& = (1-q)x_1 + (1-q)x_2 + x_3 &&
F_{001}= y_3 \\
E_{110}& = x_1x_2 &&
F_{110} = y_1y_2 \\
E_{101}& = (1-q)x_1x_2 + x_1x_3 &&
F_{101} = y_1y_3 \\
E_{011}& = (1-q)x_1x_2 + (1-q)x_1x_3 + x_2x_3 &&
F_{011} = y_2y_3 \\
E_{200}& = x_1^2  &&
F_{200} = y_1^2 + (1-q)y_1y_2 + (1-q)y_1y_3\\
E_{020}& = (1-q)x_1^2 + (1-q)x_1x_2 + x_2^2 &&
F_{020} = y_2^2 + (1-q)y_2y_3 \\
E_{002}& =\!\!\! \begin{array}[t]{l}
(1-q) x_1^2 + (1-q)^{2} x_1 x_2 + (1-q)x_1 x_3\\
\; + (1-q) x_2^2 + (1-q) x_2 x_3 + x_3^2
\end{array}
&& F_{002} = y_3^2
\end{align*}
\caption{\label{fig:ns Hall-Littlewood}
Non-symmetric Hall-Littlewood polynomials
$E^{\sigma}_{\aA}(x_{1},x_{2},x_{3};q^{-1})$ and $F^{\sigma
}_{\aA}(y_{1},y_{2},y_{3};q)$ for $l = 3$, $\sigma = 1$, and $|\aA|
\le 2$.}
\end{figure}

For context, we remark that several distinct notions of `non-symmetric
Hall-Littlewood polynomial' can be found in the literature.  Our
$E_{\lambda }$ (and $F_\lambda $, below) coincide with specializations
of non-symmetric Macdonald polynomials considered by Ion in
\cite[Theorem 4.8]{Ion08}.  The twisted variants $E_\lambda ^\sigma$
below are specializations of the `permuted basement' non-symmetric
Macdonald polynomials studied (for $\GL _{l}$) by Alexandersson
\cite{Alexandersson19} and Alexandersson and Sawhney
\cite{AlexSawh19}.
We also note that $E_\lambda(x;q^{-1})$ and $F_\lambda(x;q)$
have coefficients in $\ZZ[q]$ and specialize at $q=0$ to Demazure
characters and Demazure atoms respectively.

For any $\mu \in \RR ^{l}$ we define $\Inv (\mu ) = \{(i<j)\mid \mu
_{i}>\mu _{j} \}$.  In the case of a permutation, $\Inv (\sigma )$ is
then the usual inversion set of $\sigma =(\sigma (1),\ldots,\sigma
(l)) \in S_l$.

Taking $\rho $ as in \S \ref{ss:GL-characters} and $\epsilon >0$
small, the notation $\Inv (\lambda +\epsilon \rho )$ denotes the set
of pairs $i<j$ such that $\lambda _{i}\geq \lambda _{j}$.

Given $\sigma \in S_{l}$, we define {\em twisted} non-symmetric
Hall-Littlewood polynomials
\begin{gather}\label{e:E-twist}
E^{\sigma }_{\lambda }(x;q) = q^{|\Inv (\sigma ^{-1})\cap \Inv
(\lambda +\epsilon \rho )|} T_{\sigma ^{-1}}^{-1} E_{\sigma
^{-1}(\lambda
)}(x;q)\\
\label{e:F-twist}
F^{\sigma }_{\lambda }(x;q) = \overline{E^{\sigma w_{0}}_{-\lambda
}(x;q)} = E^{\sigma w_{0}}_{-\lambda
}(x_{1}^{-1},\ldots,x_{l}^{-1};q^{-1}),
\end{gather}
where $w_{0}\in S_{l}$ is the longest element, given by $w_{0}(i) =
l+1-i$. The normalization in \eqref{e:E-twist} implies the recurrence
\begin{equation}\label{e:E-recurrence}
E^{\sigma }_{\lambda } =
\begin{cases}
q^{-I(\lambda _{i}\leq \lambda _{i+1})}\, T_{i}\, E^{s_{i}\sigma
}_{s_{i}\lambda },
 &	  s_{i}\sigma >\sigma \\
q^{I(\lambda _{i}\geq \lambda _{i+1})}\, T_{i}^{-1}\, E^{s_{i}\sigma
}_{s_{i}\lambda }, & s_{i}\sigma <\sigma,
\end{cases}
\end{equation}
where $I(P)=1$ if $P$ is true, $I(P)=0$ if $P$ is false.  
Together
with the initial condition $E^{\sigma }_{\lambda } = x^{\lambda }$ for
all $\sigma $ if $\lambda =\lambda _{+}$, this determines $E^{\sigma
}_{\lambda }$ for all $\sigma $ and $\lambda $.

\begin{cor}\label{cor:general-monic}
$E^{\sigma }_{\lambda }$ has the monic and triangular form in
\eqref{e:E-monic} for all $\sigma $.
\end{cor}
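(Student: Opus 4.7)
The plan is to induct on the Bruhat length of $\lambda$ within its $S_{l}$-orbit $S_{l}\cdot \lambda_{+}$, treating $\sigma \in S_{l}$ as a parameter allowed to vary freely. The base case is $\lambda = \lambda_{+}$: by the initial condition attached to \eqref{e:E-recurrence}, $E^{\sigma}_{\lambda_{+}} = x^{\lambda_{+}}$ for every $\sigma$, which is trivially monic and triangular at $\lambda_{+}$.

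For the inductive step, suppose $\lambda \neq \lambda_{+}$ and pick an index $i$ with $\lambda_{i} < \lambda_{i+1}$ (one exists since $\lambda$ is not dominant). Then $\langle \alpha_{i}\chek, s_{i}\lambda \rangle > 0$ and $s_{i}\lambda < \lambda$ in the Bruhat-orbit order, so the inductive hypothesis applied to $(s_{i}\sigma, s_{i}\lambda)$ yields $E^{s_{i}\sigma}_{s_{i}\lambda} = x^{s_{i}\lambda} + \sum_{\mu < s_{i}\lambda} c_{\mu}\, x^{\mu}$. Invoking \eqref{e:Ti-triangularity-2} with $s_{i}\lambda$ in the role of the weight on which $T_{i}$ acts positively gives
\begin{equation*}
T_{i}\, E^{s_{i}\sigma}_{s_{i}\lambda} \;=\; q\, x^{\lambda} + \sum_{\mu < \lambda} d_{\mu}\, x^{\mu}.
\end{equation*}

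In the case $s_{i}\sigma > \sigma$, the recurrence reads $E^{\sigma}_{\lambda} = q^{-1}\, T_{i}\, E^{s_{i}\sigma}_{s_{i}\lambda}$ (the prefactor $q^{-I(\lambda_{i} \le \lambda_{i+1})}$ equals $q^{-1}$), so $E^{\sigma}_{\lambda}$ is immediately of the form $x^{\lambda} + \text{lower terms}$. In the case $s_{i}\sigma < \sigma$, I would rewrite $T_{i}^{-1} = q^{-1}\, T_{i} + (q^{-1} - 1)$ using the quadratic relation $(T_{i} - q)(T_{i} + 1) = 0$; applying this to $E^{s_{i}\sigma}_{s_{i}\lambda}$, the $q^{-1} T_{i}$ piece contributes $x^{\lambda} + \text{lower}$ by the display above, while the $(q^{-1}-1)\, E^{s_{i}\sigma}_{s_{i}\lambda}$ piece contributes only terms $x^{\mu}$ with $\mu \le s_{i}\lambda < \lambda$. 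Here the prefactor $q^{I(\lambda_{i} \ge \lambda_{i+1})} = q^{0} = 1$ leaves the triangular shape intact.

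The main bookkeeping step is tracking the weight partial order: one needs that the monomials appearing in $T_{i}\, x^{s_{i}\lambda}$ other than $x^{\lambda}$ itself lie strictly between $s_{i}\lambda$ and $\lambda$ on the $\alpha_{i}$-root string---hence in strictly smaller $S_{l}$-orbits, so $< \lambda$ in the weight order---and that the subspace spanned by $\{x^{\mu} : \mu \le s_{i}\lambda\}$ is closed under $T_{i}$, and therefore under $T_{i}^{-1}$ by the quadratic relation. Both facts are already supplied by the discussion leading up to \eqref{e:Ti-triangularity-2}, so no new technical ingredient beyond the recurrence is required; the argument is essentially the assembly of the two cases above.
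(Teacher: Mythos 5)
Your proposal is correct and follows essentially the same route the paper takes: the paper's one-line proof simply points at \eqref{e:Ti-triangularity-2} and \eqref{e:E-recurrence}, and you have spelled out the induction on Bruhat length within the orbit that this reference implicitly leaves to the reader, handling both branches of the recurrence (using $T_i^{-1} = q^{-1}T_i + (q^{-1}-1)$ for the $s_i\sigma < \sigma$ case) so that triangularity propagates.
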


\begin{proof}
This follows from \eqref{e:Ti-triangularity-2} and \eqref{e:E-recurrence}.
\end{proof}

\begin{prop}\label{prop:twist-orthogonality}
For every $\sigma \in S_{l}$, the $E_{\lambda }^{\sigma }(x;q)$ and
$\overline{F_{\lambda }^{\sigma }(x;q)}$ are dual bases of $\kk
[x_{1}^{\pm 1},\ldots,x_{l}^{\pm 1}]$ with respect to the inner
product defined by
\begin{equation}\label{e:HL-inner}
\langle f,g \rangle_{q} = \langle x^{0} \rangle\, f\, g\, \prod _{i<j}
\frac{1-x_{i}/x_{j}}{1 - q^{-1} x_{i}/x_{j}}.
\end{equation}
In other words, $\langle E_{\lambda }^{\sigma }, \overline{F_{\mu
}^{\sigma }} \rangle _{q} = \delta _{\lambda \mu }$ for all $\lambda
,\mu \in \ZZ ^{l}$ and all $\sigma \in S_{l}$.
\end{prop}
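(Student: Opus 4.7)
The plan is to prove the proposition by induction on $\ell(\sigma)$, using self-adjointness of the Demazure--Lusztig operators $T_i$ with respect to $\langle -, -\rangle_q$ as the main tool. First I would establish that $T_i$ is self-adjoint: $\langle T_i f, g\rangle_q = \langle f, T_i g\rangle_q$ for all Laurent polynomials $f, g$. The density $\Delta = \prod_{i<j}(1-x_i/x_j)/(1-q^{-1}x_i/x_j)$ is engineered so that this holds; the verification is a direct calculation using the explicit form \eqref{e:Demazure-Lusztig} of $T_i$, the $s_i$-invariance of the constant term functional $[x^0]$, and an explicit computation of $s_i \Delta / \Delta$, which involves only the single factor of $\Delta$ depending on $x_i$ and $x_{i+1}$. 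As a consequence, $T_i^{-1}$ is also self-adjoint.

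Next I would derive the analog of \eqref{e:E-recurrence} for $\overline{F^\sigma_\mu}$. Using $\overline{F^\sigma_\mu} = E^{\sigma w_0}_{-\mu}$ from \eqref{e:F-twist} and the fact that right multiplication by $w_0$ reverses the Bruhat order (so $s_i \sigma > \sigma$ iff $s_i(\sigma w_0) < \sigma w_0$), the second case of \eqref{e:E-recurrence} gives
$$E^{\sigma w_0}_{-\mu} = q^{I(\mu_i \leq \mu_{i+1})}\, T_i^{-1}\, E^{s_i \sigma w_0}_{-s_i \mu} \quad \text{when } s_i \sigma > \sigma.$$
Combined with the first case of \eqref{e:E-recurrence} for $E^\sigma_\lambda$ and with self-adjointness of $T_i$,
$$\langle E^\sigma_\lambda, E^{\sigma w_0}_{-\mu}\rangle_q = q^{I(\mu_i \leq \mu_{i+1}) - I(\lambda_i \leq \lambda_{i+1})}\, \langle E^{s_i\sigma}_{s_i\lambda}, E^{s_i\sigma w_0}_{-s_i\mu}\rangle_q.$$
At $\lambda = \mu$ the $q$-exponent vanishes, so $\delta_{\lambda\mu}$ is preserved under the simultaneous substitution $(\sigma,\lambda,\mu) \mapsto (s_i \sigma, s_i\lambda, s_i\mu)$. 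Since orbits of this diagonal $S_l$-action are parametrized by $(\sigma^{-1}\lambda, \sigma^{-1}\mu)$, this reduces the problem to orthogonality at $\sigma = e$ for arbitrary $\lambda, \mu$.

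For the base case $\langle E_\lambda, E^{w_0}_{-\mu}\rangle_q = \delta_{\lambda \mu}$, when $\lambda$ is dominant and $\mu$ is antidominant then $E_\lambda = x^\lambda$ and $E^{w_0}_{-\mu} = x^{-\mu}$, reducing the inner product to $[x^0] x^{\lambda - \mu} \Delta$. The key fact is that the Laurent expansion of $\Delta$ is supported on the cone $Q_+ = \NN R_+$ of non-negative integer combinations of positive roots, so $[x^0] x^\nu \Delta$ is nonzero only when $-\nu \in Q_+$. Combined with $\lambda - \mu$ being dominant (as a sum of the two dominant weights $\lambda$ and $-\mu$) and the observation that the cone of dominant weights intersects $-Q_+$ only at zero, this forces $\lambda = \mu$ (a constant vector), in which case $[x^0]\Delta = 1$. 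For general $\lambda, \mu$, the argument would proceed by writing $E_\lambda = q^{-\ell(w)} T_w x^{\lambda_+}$, transferring $T_w$ to the right side via self-adjointness, and iteratively applying the recurrence for $E^{w_0}_{-\mu}$ to reduce to the dominant/antidominant case, combined with the triangularity of Corollary \ref{cor:general-monic} to control error terms.

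The main obstacle is the self-adjointness of $T_i$ in Step 1: though standard in the theory of non-symmetric polynomials, the verification requires the precise cancellations enabled by the specific form of $\Delta$, along with careful bookkeeping of the $q$-powers accumulated through the inductive reduction. A cleaner alternative would be to invoke Cherednik's $Y$-operators, which are self-adjoint with $\{E^\sigma_\lambda\}$ as joint eigenvectors with distinct eigenvalues; this would package the orthogonality in a single step but at the cost of additional machinery outside the paper.
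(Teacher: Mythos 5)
Your overall plan matches the paper's: establish that $T_i$ is self-adjoint for $\langle-,-\rangle_q$, use the recurrence \eqref{e:E-recurrence} together with self-adjointness to obtain a transform identity relating $\langle E^\sigma_\lambda, E^{\sigma w_0}_{-\mu}\rangle_q$ to $\langle E^{v\sigma}_{v\lambda}, E^{v\sigma w_0}_{-v\mu}\rangle_q$, and then reduce to a computable constant term using $\supp(\Delta)=Q_+$. However, there is a genuine gap in how you carry out the reduction. You propose to "reduce to the dominant/antidominant case" and only there invoke the support of $\Delta$. But the transform identity acts \emph{diagonally} on $(\sigma,\lambda,\mu)$, so you cannot simultaneously put $\lambda$ into dominant and $\mu$ into antidominant position for generic $\lambda,\mu$ (e.g.\ for $\lambda=\mu=(1,0)$ these two normalizations require different $v$). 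Your subsequent device — writing $E_\lambda=q^{-\ell(w)}T_w x^{\lambda_+}$ and moving $T_w$ to the other slot — just re-parametrizes the same diagonal action: it lands you at $\langle x^{\lambda_+}, E^{ww_0}_{-w\mu}\rangle_q$, where the second argument has weight $-w\mu$, with $w$ determined by $\lambda$ and thus not controllable to make $-w\mu$ dominant.

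The paper's resolution is to normalize only $\mu$ (choosing $v$ with $v\mu=\mu_-$ antidominant, so that $E^{v\sigma w_0}_{-\mu_-}=x^{-\mu_-}$), and then argue entirely via supports without putting $\lambda$ in any special position: from $\supp(\Delta)=Q_+$ and $\supp(E^{v\sigma}_{v\lambda})\subseteq\conv(S_l\cdot\lambda)$ one deduces that nonvanishing forces $\lambda_+\geq\mu_+$. The reverse inequality $\lambda_+\leq\mu_+$ is then obtained by a \emph{symmetry} of the pairing under $(\lambda,\sigma)\leftrightarrow(-\mu,\sigma w_0)$, which is not mentioned in your proposal and is essential; without it one only gets a one-sided containment and cannot conclude the vanishing for $\lambda_+\neq\mu_+$. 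Once $\lambda_+=\mu_+$ is known, the triangularity of Corollary~\ref{cor:general-monic} gives both the vanishing for $\lambda\neq\mu$ in the same orbit and the value $1$ at $\lambda=\mu$. In short: your Step~1 (self-adjointness of $T_i$) and the transform identity are correct and identical to the paper; the missing pieces are the explicit support bound $\supp(E^{v\sigma}_{v\lambda})\subseteq\conv(S_l\cdot\lambda)$ and, crucially, the symmetry swap that yields the second dominance inequality.
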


To prove Proposition \ref{prop:twist-orthogonality} we need the
following lemma.

\begin{lemma}\label{lem:Ti-self-adjoint}
The Demazure-Lusztig operators $T_{i}$ in \eqref{e:Demazure-Lusztig}
are self-adjoint with respect to $\langle -,- \rangle_{q}$.
\end{lemma}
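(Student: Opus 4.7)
The plan is to rewrite $T_i$ in the form
\[
T_i = A(x)\, s_i + B(x), \qquad A(x) = \frac{x_i - q\, x_{i+1}}{x_i - x_{i+1}}, \qquad B(x) = \frac{(q-1)\, x_i}{x_i - x_{i+1}},
\]
obtained directly from the definition \eqref{e:Demazure-Lusztig} by combining the two occurrences of $s_i$.  Since $B(x)$ is a scalar (with respect to both $f$ and $g$), its contribution to $\langle T_i f, g \rangle_q$ and to $\langle f, T_i g \rangle_q$ is the same, so it suffices to verify self-adjointness of the operator $A(x)\, s_i$ with respect to the kernel
\[
\Delta(x) = \prod_{i<j} \frac{1 - x_i/x_j}{1 - q^{-1} x_i/x_j}.
\]

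The main step is the identity
\[
s_i(\Delta) = \frac{x_i - q\, x_{i+1}}{q\, x_i - x_{i+1}}\, \Delta,
\]
which I would verify by direct inspection: the simple reflection $s_i$ permutes every factor of $\Delta $ indexed by a pair $(a,b)$ with $\{a,b\}\ne \{i,i+1\}$ among the other factors of $\Delta $, and only the factor indexed by $(i,i+1)$ is altered; computing the ratio for this single factor with $u = x_i/x_{i+1}$ gives exactly $(x_i - q\, x_{i+1})/(q\, x_i - x_{i+1})$.

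With this in hand, I invoke the fact that the constant-term functional $\langle x^0 \rangle$ is $s_i$-invariant to get
\[
\langle f, A\, s_i g \rangle_q = \langle x^0 \rangle\, A(x)\, f\, s_i(g)\, \Delta = \langle x^0 \rangle\, s_i\!\bigl(A(x)\, f\, s_i(g)\, \Delta\bigr) = \langle x^0 \rangle\, s_i(A)(x)\, s_i(f)\, g\, s_i(\Delta).
\]
Substituting $s_i(A) = (q\, x_i - x_{i+1})/(x_i - x_{i+1})$ and the formula for $s_i(\Delta)$ above, the product $s_i(A)\cdot s_i(\Delta)$ collapses to $A\cdot \Delta $, yielding $\langle f, A s_i g \rangle_q = \langle x^0 \rangle\, A(x)\, s_i(f)\, g\, \Delta = \langle A s_i f, g \rangle_q$, as required.

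The only real obstacle is the bookkeeping in the displayed kernel identity; everything else is formal manipulation using $s_i$-invariance of $\langle x^0 \rangle$.  One also should briefly note that the constant-term functional is well defined on the formal Laurent series arising here because the geometric expansions of $1/(1 - q^{-1} x_i/x_j)$ in $\Delta $ and of $1/(1 - x_{i+1}/x_i)$ inside $T_i$ contribute only finitely many terms to any given monomial.
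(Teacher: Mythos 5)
Your proof is correct and takes essentially the same approach as the paper's. The paper writes $T_i - q = A(x)(s_i - 1)$ with the same $A(x) = (x_i - q\,x_{i+1})/(x_i - x_{i+1})$, observes that $A(x)\,\Delta(x)$ is (up to the scalar $q$) the $s_i$-invariant kernel obtained by dropping the $(i,i+1)$ factor, and concludes via $s_i$-invariance of the constant-term functional; your identity $s_i(A)\,s_i(\Delta) = A\,\Delta$ is precisely that same observation, and the extra step of checking that the multiplication operator $B(x)$ is self-adjoint is the cosmetic difference between working with $A\,s_i + B$ versus $T_i - q$.
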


\begin{proof}
It's the same to show that $T_{i}-q$ is self-adjoint.  A bit of
algebra gives
\begin{equation}\label{e:Ti-q}
T_{i}-q =q\, \frac{1-q^{-1} x_{i}/x_{i+1}}{1-x_{i}/x_{i+1}}(s_{i}-1),
\end{equation}
and therefore
\begin{equation}\label{e:Ti-q-adjoint}
\langle (T_{i}-q)f,g \rangle_{q} = q\, \langle x^{0} \rangle\,
(s_{i}(f)\, g-f\, g) \prod \frac{1-x_{j}/x_{k}}{1-q^{-1}x_{j}/x_{k}},
\end{equation}
where the product is over $j<k$ with $(j,k) \not = (i,i+1)$.  We want
to show that this is symmetric in $f$ and $g$, i.e., that the right
hand side is unchanged if we replace $s_{i}(f)\, g$ with $f\,
s_{i}(g)$.  Let $\Delta $ denote the product factor in
\eqref{e:Ti-q-adjoint}, and note that $\Delta $ is symmetric in
$x_{i}$ and $x_{i+1}$.  The constant term $\langle x^{0} \rangle\,
\varphi (x)$ of any $\varphi (x_{1}, \ldots, x_{l})$ is equal to
$\langle x^{0} \rangle\, s_{i} (\varphi (x))$.  In particular,
$\langle x^{0} \rangle s_{i}(f) g\, \Delta = \langle x^{0} \rangle f
s_{i}(g)\, \Delta $, which implies the desired result.
\end{proof}

\begin{proof}[Proof of Proposition \ref{prop:twist-orthogonality}]
The desired identity is just a tidy notation for $\langle E_{\lambda
}^{\sigma },E_{-\mu }^{\sigma w_{0}} \rangle _{q} = \delta _{\lambda
\mu }$.

By \eqref{e:E-recurrence}, for every $i$, we have either $\langle
E_{\lambda }^{\sigma }, E_{-\mu }^{\sigma w_{0}} \rangle _{q} =
q^{e}\, \langle T_{i} E_{s_{i}\lambda }^{s_{i}\sigma },
T_{i}^{-1}E_{-s_{i} \mu }^{s_{i} \sigma w_{0}} \rangle _{q}$ or
$\langle E_{\lambda }^{\sigma }, E_{-\mu }^{\sigma w_{0}} \rangle _{q}
= q^{e}\, \langle T_{i}^{-1} E_{s_{i} \lambda }^{s_{i} \sigma }, T_{i}
E_{- s_{i} \mu }^{s_{i} \sigma w_{0}} \rangle _{q}$, depending on
whether $s_{i}\sigma >\sigma $ or $s_{i}\sigma <\sigma $, for some
exponent $e$.  Moreover, if $\lambda =\mu $, then $q^{e} = 1$.

Since $T_{i}$ is self-adjoint, we get $\langle E_{\lambda }^{\sigma },
E_{-\mu }^{\sigma w_{0}} \rangle _{q} = q^{e} \langle E_{s_{i}\lambda
}^{s_{i}\sigma }, E_{-s_{i} \mu }^{s_{i} \sigma w_{0}} \rangle _{q}$
in either case.  Repeating this gives an identity
\begin{equation}\label{e:transform-inner}
\langle E^{\sigma }_{\lambda }, E^{\sigma w_{0}}_{-\mu } \rangle _{q} =
q^{e} \langle E^{v \sigma }_{v \lambda }, E^{v \sigma w_{0}}_{-v \mu }
\rangle _{q}
\end{equation}
for all $\lambda ,\mu \in \ZZ ^{l}$ and all $\sigma ,v\in S_{l}$,
again with $q^{e} = 1$ if $\lambda =\mu $.

Choose $v\in S_{l}$ such that $\mu _{-} = v (\mu )$ is antidominant.
Then \eqref{e:transform-inner} gives
\begin{equation}\label{e:reduce-mu}
\langle E^{\sigma }_{\lambda }, E^{\sigma w_{0}}_{-\mu } \rangle _{q}
= q^{e} \langle E^{v \sigma }_{v \lambda }, x^{-(\mu _{-})} \rangle
_{q} =q^{e}\, \langle x^{\mu _{-}} \rangle \, \Delta\, E^{v
\sigma }_{v \lambda },
\end{equation}
where $\Delta $ is the product factor
in \eqref{e:HL-inner}.  Let
$\supp (f)$ denote the set of weights $\nu $ for which $x^{\nu }$
occurs with non-zero coefficient in $f$.  Since $\supp (\Delta )
= Q_{+}$, and $\supp (E^{v \sigma }_{v \lambda })\subseteq \conv
(S_{l}\cdot \lambda )$, it follows from \eqref{e:reduce-mu} that if
$\langle E^{\sigma }_{\lambda }, E^{\sigma w_{0}}_{-\mu } \rangle
_{q}\not =0$, then $(\mu _{-}-Q_{+})\cap \conv (S_{l}\cdot \lambda
)\not =\emptyset $ and therefore $\mu _{-} - \lambda _{-}\in
Q_{+}$. Since $w_{0}(Q_{+}) = -Q_{+}$, this is equivalent to $\lambda
_{+}\geq \mu _{+}$.

By symmetry, exchanging $\lambda $ with $-\mu $ and $\sigma $ with
$\sigma w_{0}$, if $\langle E^{\sigma }_{\lambda }, E^{\sigma
w_{0}}_{-\mu } \rangle _{q}\not =0$ then we also have $(-\lambda )_{-}
- (-\mu )_{-}\in Q_{+}$, hence $\lambda _{+}-\mu _{+}\in -Q_{+}$, that
is, $\lambda _{+}\leq \mu _{+}$.  Hence, $\langle E^{\sigma }_{\lambda
}, E^{\sigma w_{0}}_{-\mu } \rangle _{q}\not =0$ implies $\lambda _{+}
= \mu _{+}$, so $\lambda $ and $\mu $ belong to the same $S_{l}$
orbit.  This reduces the problem to the case that $S_{l}\cdot\lambda =
S_{l}\cdot \mu $.

In this case, $(\mu _{-}-Q_{+})\cap \conv (S_{l}\cdot \lambda ) =
\{\mu _{-} \}$.  Furthermore, if $\lambda \not =\mu $, then $v \lambda
\not =\mu _{-}$, and Corollary \ref{cor:general-monic} implies that
$(\mu _{-}-Q_{+})\cap \supp (E^{v \sigma }_{v \lambda }) = \emptyset
$, hence $\langle E^{\sigma }_{\lambda }, E^{\sigma w_{0}}_{-\mu }
\rangle _{q} =0$.

If $\lambda =\mu $, then the right hand side of \eqref{e:reduce-mu}
reduces to $\langle x^{\mu _{-}} \rangle\, \Delta\, E^{v \sigma
}_{\mu _{-}}$.  Since $\supp (\Delta)=Q_{+}$ and $\supp (E^{v
\sigma }_{\mu _{-}})\subset \mu _{-}+Q_{+}$, only the constant term of
$\Delta$ and the $x^{\mu _{-}}$ term of $E^{v\sigma }_{\mu
_{-}}$ contribute to the coefficient of $x^{\mu _{-}}$ in $\Delta \,
E^{v \sigma }_{\mu _{-}}$, and we have $\langle x^{\mu _{-}}
\rangle E^{v\sigma }_{\mu _{-}} = 1$ by Corollary
\ref{cor:general-monic}.  Hence, $\langle E^{\sigma }_{\lambda },
E^{\sigma w_{0}}_{-\lambda } \rangle _{q} =1$.
\end{proof}

\begin{lemma}\label{lem:factorization}
Given $\lambda \in \ZZ ^{l}$, suppose there is an index $k$ such that
$\lambda _{i}\geq \lambda _{j}$ for all $i\leq k$ and $j>k$.  Given
$\sigma \in S_{l}$, let $\sigma _{1}\in S_{k}$ and $\sigma _{2}\in
S_{l-k}$ be the permutations such that $\sigma _{1}(1),\ldots,\sigma
_{1}(k)$ are in the same relative order as $\sigma (1),\ldots,\sigma
(k)$, and $\sigma _{2}(1),\ldots,\sigma _{2}(l-k)$ are in the same
relative order as $\sigma (k+1),\ldots,\sigma (l)$.  Then
\begin{equation}\label{e:factorization}
E^{\sigma ^{-1}}_{\lambda }(x_{1},\ldots,x_{l};q) = E^{\sigma_{1}
^{-1}}_{(\lambda_{1},\ldots,\lambda _{k}) }(x_{1},\ldots,x_{k};q)\,
E^{\sigma_{2} ^{-1}}_{(\lambda_{k+1},\ldots,\lambda _{l})
}(x_{k+1},\ldots,x_{l};q).
\end{equation}
\end{lemma}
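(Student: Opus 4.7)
The plan is to induct on $\ell(\sigma)$ using the recurrence \eqref{e:E-recurrence}. For the base case $\sigma = 1$ (so $\sigma_1 = 1_{S_k}$, $\sigma_2 = 1_{S_{l-k}}$), the block-separation hypothesis makes $\lambda_{+}$ the concatenation $(\lambda^{(1)})_{+} \cdot (\lambda^{(2)})_{+}$, so the minimal $w \in S_{l}$ with $w\lambda_{+} = \lambda$ lies in $S_{k}\times S_{l-k}$ and factors as $w = w_{1}w_{2}$ with $\ell(w) = \ell(w_{1}) + \ell(w_{2))}$. Since $T_{w_{1}}$ and $T_{w_{2}}$ act on the disjoint alphabets $\{x_{1},\ldots,x_{k}\}$ and $\{x_{k+1},\ldots,x_{l}\}$, the formula $E_{\lambda} = q^{-\ell(w)}T_{w}x^{\lambda_{+}}$ factors into the desired product $E_{\lambda^{(1)}}(x_{1},\ldots,x_{k}) \cdot E_{\lambda^{(2)}}(x_{k+1},\ldots,x_{l})$.

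For the inductive step with $\sigma \neq 1$, I distinguish two cases. Case A is when $\sigma(i) > \sigma(i+1)$ for some $i \neq k$; without loss of generality $i < k$. Since this means $s_{i}\sigma^{-1} < \sigma^{-1}$, the recurrence \eqref{e:E-recurrence} gives $E^{\sigma^{-1}}_{\lambda} = q^{I(\lambda_{i}\geq\lambda_{i+1})}\, T_{i}^{-1}\, E^{(\sigma s_{i})^{-1}}_{s_{i}\lambda}$. Since $i < k$, the swap $s_{i}$ preserves block structure: $s_{i}\lambda$ is still block-separated, $(\sigma s_{i})_{1} = \sigma_{1}s_{i}$, and $(\sigma s_{i})_{2} = \sigma_{2}$. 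By induction, the right-hand side factors with first-block factor $E^{(\sigma_{1}s_{i})^{-1}}_{s_{i}\lambda^{(1)}}(x_{1},\ldots,x_{k})$ and unchanged second-block factor $E^{\sigma_{2}^{-1}}_{\lambda^{(2)}}(x_{k+1},\ldots,x_{l})$. The operator $T_{i}^{-1}$ acts only on $\{x_{1},\ldots,x_{k}\}$, and applying the recurrence \eqref{e:E-recurrence} inside $S_{k}$ (which applies since $\sigma_{1}(i) > \sigma_{1}(i+1)$) identifies the transformed first factor as $E^{\sigma_{1}^{-1}}_{\lambda^{(1)}}$. The case $i > k$ is symmetric.

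Case B is when every descent of $\sigma$ occurs at $i = k$, which forces $\sigma$ to be a non-trivial $(k,l-k)$-shuffle with $\sigma_{1} = 1$ and $\sigma_{2} = 1$. The right-hand side then reduces via the base case to $E_{\lambda}$, so it remains to show $E^{\sigma^{-1}}_{\lambda} = E_{\lambda}$. Using \eqref{e:E-twist}, $E^{\sigma^{-1}}_{\lambda} = q^{|\Inv(\sigma)\cap\Inv(\lambda+\epsilon\rho)|}\, T_{\sigma}^{-1}\, E_{\sigma(\lambda)}$; for a shuffle, $\Inv(\sigma)$ consists entirely of block-crossing pairs $(i,j)$ with $i \leq k < j$, all of which lie in $\Inv(\lambda+\epsilon\rho)$ by block-separation, so the $q$-exponent equals $\ell(\sigma)$. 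The task thus reduces to showing $T_{\sigma} E_{\lambda} = q^{\ell(\sigma)} E_{\sigma(\lambda)}$. I would prove this by iterating, along a reduced expression $\sigma = s_{i_{m}}\cdots s_{i_{1}}$, the auxiliary identity $T_{i} E_{\mu} = q\, E_{s_{i}\mu}$ (valid whenever $\mu_{i} \geq \mu_{i+1}$); the standard bijection between reduced-word factors and inversions of $\sigma$ ensures each successive weight $s_{i_{j-1}}\cdots s_{i_{1}}\lambda$ satisfies the needed inequality, because the positions swapped at each step correspond to a block-crossing inversion of $\sigma$.

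The main obstacle is Case B, and specifically the auxiliary identity $T_{i} E_{\mu} = q\, E_{s_{i}\mu}$ for $\mu_{i}\geq\mu_{i+1}$. The subcase $\mu_{i}=\mu_{i+1}$ follows from \eqref{e:Ti-triangularity-0} together with the monic triangular form of $E_{\mu}$ (Corollary \ref{cor:general-monic}). The subcase $\mu_{i}>\mu_{i+1}$ can be deduced by using the recurrence \eqref{e:E-recurrence} to express $E^{s_{i}}_{s_{i}\mu}$ in terms of $E_{\mu}$ and $E_{s_{i}\mu}$, combined with the Hecke quadratic relation $T_{i}^{2} = (q-1)T_{i} + q$, to verify the identity directly.
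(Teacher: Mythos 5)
Your proof is correct, but it takes a genuinely different route from the paper's, and the difference matters. The paper's proof inducts on $|\Inv(-\lambda)|$ rather than on $\ell(\sigma)$: the block-separation hypothesis on $\lambda$ guarantees that whenever $\lambda$ is not dominant, there is an index $i\neq k$ with $\lambda_i<\lambda_{i+1}$, so the recurrence \eqref{e:E-recurrence} can always be applied at some $i\neq k$, strictly decreasing $|\Inv(-\lambda)|$ while preserving block-separation and keeping $T_i$ inside the Young subgroup $S_k\times S_{l-k}$. That choice makes the argument indifferent to the descent pattern of $\sigma$, so no case analysis on $\sigma$ is needed. Your induction on $\ell(\sigma)$ instead uses the recurrence at descents of $\sigma$, and block-separation does not prevent $\sigma$ from having its only descent at $i=k$; this is exactly your Case~B, which forces you into a separate, substantially heavier direct argument (the shuffle case). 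Your Case~B does work: the claimed auxiliary identity $T_iE_\mu=qE_{s_i\mu}$ for $\mu_i\geq\mu_{i+1}$ is true, and the reduced-word/inversion bookkeeping ensures each intermediate weight satisfies the needed inequality because every inversion of a shuffle is block-crossing. However, your justification of the subcase $\mu_i=\mu_{i+1}$ via \eqref{e:Ti-triangularity-0} and Corollary~\ref{cor:general-monic} is incomplete as sketched — triangularity of $E_\mu$ says nothing about whether $T_i$ fixes the lower-order terms $x^\nu$ with $s_i\nu\neq\nu$; the clean argument there is to use the well-definedness of $E_\mu=q^{-\ell(w)}T_wx^{\mu_+}$ under changing $w$ by $s_i\in\Stab(\mu_+)$. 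In short, the content and conclusion are right, but by parameterizing the induction differently, the paper sidesteps the shuffle case entirely; your argument is considerably longer for the same result.
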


\begin{proof}
If $\lambda $ is dominant, the result is trivial.  Otherwise, the
recurrence \eqref{e:E-recurrence} determines $E^{\sigma
^{-1}}_{\lambda }$ by induction on $|\Inv (-\lambda )|$.  The
condition on $\lambda $ implies that we only need to use
\eqref{e:E-recurrence} for $i\not =k$, that is, for $s_{i}$ in the
Young subgroup $S_{k}\times S_{l-k}\subset S_{l}$.  For $i\not =k$,
the right hand side of \eqref{e:factorization} satisfies the same
recurrence.
\end{proof}

\subsection{LLT series}
\label{ss:L-beta-alpha}

\begin{defn}\label{def:L-beta-alpha}
Given $\GL _{l}$ weights $\alpha ,\beta \in \ZZ ^{l}$ and a
permutation $\sigma \in S_{l}$, the {\em LLT series} $\Lcal^{\sigma }
_{\beta /\alpha }(x_{1},\ldots,x_{l};q)$ is the infinite formal sum of
irreducible $\GL _{l}$ characters in which the coefficient of $\chi
_{\lambda }$ is defined by
\begin{equation}\label{e:L-beta-alpha}
\langle \chi _{\lambda } \rangle\, \Lcal ^{\sigma ^{-1}}_{\beta /\alpha
}(x;q^{-1}) = \langle E^{\sigma }_{\beta } \rangle \, \chi
_{\lambda } \cdot E^{\sigma }_{\alpha },
\end{equation}
where $E^{\sigma }_{\lambda }(x;q)$ are the twisted non-symmetric
Hall-Littlewood polynomials from \S \ref{ss:non-symm-HL}.
\end{defn}

We remark that the coefficients of $E^{\sigma }_{\lambda }(x;q)$ are
polynomials in $q^{-1}$, so the convention of inverting $q$ in
\eqref{e:L-beta-alpha} makes the coefficients of $\Lcal ^{\sigma
}_{\beta /\alpha }(x;q)$ polynomials in $q$.  Inverting $\sigma $ as
well leads to a more natural statement and proof in
Corollary~\ref{cor:G-versus-L}, below.

\begin{prop}\label{prop:L-formula}
We have the formula
\begin{equation}\label{e:L-formula}
\Lcal ^{\sigma }_{\beta /\alpha }(x;q) = \Hbold _{q}(w_{0}(F^{\sigma
^{-1} }_{\beta }(x;q) \overline{E^{\sigma ^{-1}}_{\alpha }(x;q)})),
\end{equation}
where $\Hbold _{q}$ is the Hall-Littlewood symmetrization operator in
\eqref{e:Hq} and $w_{0}(i)=l+1-i$ is the longest element in $S_{l}$.
\end{prop}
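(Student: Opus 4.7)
The plan is to verify the stated identity coefficientwise in the irreducible $\GL_{l}$ characters. By \eqref{e:L-beta-alpha}, with $\sigma\mapsto\sigma^{-1}$ and $q\mapsto q^{-1}$,
\begin{equation*}
\langle\chi_\lambda\rangle\,\Lcal^{\sigma}_{\beta/\alpha}(x;q) \;=\; \langle E^{\sigma^{-1}}_\beta(x;q^{-1})\rangle\,\chi_\lambda(x)\,E^{\sigma^{-1}}_\alpha(x;q^{-1}),
\end{equation*}
so it suffices to show that the coefficient of $\chi_\lambda$ in the right-hand side of \eqref{e:L-formula} takes this same value for every polynomial weight $\lambda$.

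First, I would combine the definition \eqref{e:Hq} of $\Hbold_q$ with \eqref{e:Cauchy-Weyl2} to write
\begin{equation*}
\langle\chi_\lambda\rangle\,\Hbold_q(\psi) \;=\; \langle x^0\rangle\,\overline{\chi_\lambda}\;\psi\;\prod_{i<j}\frac{1-x_i/x_j}{1-q\,x_i/x_j},
\end{equation*}
where $\psi = w_0\bigl(F^{\sigma^{-1}}_{\beta}(x;q)\,\overline{E^{\sigma^{-1}}_{\alpha}(x;q)}\bigr)$.

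Next, inside $\langle x^0\rangle$ I would make the monomial-preserving change of variables $x_i\mapsto x_{l+1-i}^{-1}$, i.e.\ $x\mapsto w_0(x^{-1})$, which leaves the constant-term functional invariant. The products $\prod_{i<j}(1-x_i/x_j)$ and $\prod_{i<j}(1-q\,x_i/x_j)$ each return to themselves after reindexing $(i,j)\leftrightarrow(l+1-j,l+1-i)$, and $\overline{\chi_\lambda}$ returns to $\chi_\lambda$ since $\chi_\lambda$ is symmetric. The outer $w_0$ in $\psi$ cancels the $w_0$ built into the substitution, so $\psi(w_0(x^{-1}))=F^{\sigma^{-1}}_\beta(x^{-1};q)\,\overline{E^{\sigma^{-1}}_\alpha}(x^{-1};q)$. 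Applying \eqref{e:F-twist} in the form $F^{\sigma^{-1}}_\beta(y;q)=E^{\sigma^{-1}w_0}_{-\beta}(y^{-1};q^{-1})$ at $y=x^{-1}$, and $\overline{E^{\sigma^{-1}}_\alpha}(x^{-1};q)=E^{\sigma^{-1}}_\alpha(x;q^{-1})$, I obtain
\begin{equation*}
\langle\chi_\lambda\rangle\,\Hbold_q(\psi) \;=\; \langle x^0\rangle\,\chi_\lambda(x)\,E^{\sigma^{-1}}_\alpha(x;q^{-1})\,E^{\sigma^{-1}w_0}_{-\beta}(x;q^{-1})\,\prod_{i<j}\frac{1-x_i/x_j}{1-q\,x_i/x_j}.
\end{equation*}

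Finally, I would recognize this constant term as the pairing $\langle\chi_\lambda\,E^{\sigma^{-1}}_\alpha(x;q^{-1}),\,E^{\sigma^{-1}w_0}_{-\beta}(x;q^{-1})\rangle_{q^{-1}}$ given by \eqref{e:HL-inner} with $q$ replaced by $q^{-1}$. Proposition~\ref{prop:twist-orthogonality} applied with $q\mapsto q^{-1}$, together with the identity $\overline{F^{\sigma^{-1}}_\mu(x;q^{-1})}=E^{\sigma^{-1}w_0}_{-\mu}(x;q^{-1})$ (two bars cancel in \eqref{e:F-twist}), shows that $\{E^{\sigma^{-1}}_\mu(x;q^{-1})\}_\mu$ and $\{E^{\sigma^{-1}w_0}_{-\mu}(x;q^{-1})\}_\mu$ are $\langle\cdot,\cdot\rangle_{q^{-1}}$-dual bases of $\kk[x^{\pm 1}]$. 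Hence the pairing extracts $\langle E^{\sigma^{-1}}_\beta(x;q^{-1})\rangle\,\chi_\lambda\,E^{\sigma^{-1}}_\alpha(x;q^{-1})$, matching the target identified at the start. The only real obstacle is the bookkeeping of $q\leftrightarrow q^{-1}$ and $\sigma\leftrightarrow\sigma^{-1}$ forced by \eqref{e:L-beta-alpha}, and checking that $x\mapsto w_0(x^{-1})$ commutes with the formal geometric-series expansion of $1/(1-q\,x_i/x_j)$; the latter holds because the substitution merely permutes the set of factors $\{1-q\,x_i/x_j:i<j\}$, which is still expanded as a series in $q$ afterwards.
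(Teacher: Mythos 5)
Your proof is correct and follows essentially the same route as the paper's: both rely on the duality of Proposition~\ref{prop:twist-orthogonality}, the change of variables $x_{i}\mapsto x_{l+1-i}^{-1}$, and the constant-term formula \eqref{e:Cauchy-Weyl2}. The only difference is direction: you start from $\langle\chi_\lambda\rangle\,\Hbold_q(\psi)$ and unwind it to the defining formula \eqref{e:L-beta-alpha}, whereas the paper starts from \eqref{e:L-beta-alpha} and builds up to the $\Hbold_q$ expression.
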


\begin{proof}
By Proposition \ref{prop:twist-orthogonality}, the coefficient $
\langle E^{\sigma ^{-1}}_{\beta }(x;q^{-1}) \rangle \, \chi _{\lambda
} \cdot E^{\sigma ^{-1}}_{\alpha }(x;q^{-1})$ of $\chi _{\lambda }$ in
$\Lcal ^{\sigma }_{\beta /\alpha }$ is given by the constant term
\begin{equation}\label{e:LLT-coef}
\langle x^{0} \rangle\, \chi _{\lambda }\, F^{\sigma ^{-1}}_{\beta
}(x^{-1};q)\, E^{\sigma ^{-1}}_{\alpha }(x;q^{-1}) \prod
_{i<j}\frac{1-x_{i}/x_{j}}{1-q\, x_{i}/x_{j}}.
\end{equation}
Substituting $x_{i}\mapsto x_{i}^{-1}$ and applying $w_{0}$, this is
equal to
\begin{equation}\label{e:LLT-coef-bis}
\langle x^{0} \rangle\, \overline{\chi _{\lambda }}\, w_{0}(F^{\sigma
^{-1}}_{\beta }(x;q) \overline{E^{\sigma ^{-1}}_{\alpha }(x;q)}) \prod
_{i<j}\frac{1-x_{i}/x_{j}}{1-q\, x_{i}/x_{j}}.
\end{equation}
Considering this expression as a formal Laurent series in $q$ and
applying \eqref{e:Cauchy-Weyl2} coefficient-wise yields
\begin{equation}
\langle \chi _{\lambda } \rangle \sigmabold \left(
w_{0}(F^{\sigma^{-1}}_{\beta }(x;q) \overline{E^{\sigma ^{-1}}_{\alpha
}(x;q)}) \prod_{i<j}\frac{1}{1-q\, x_{i}/x_{j}} \right),
\end{equation}
which is $\langle \chi _{\lambda } \rangle \Hbold _{q}(w_{0}(F^{\sigma
^{-1} }_{\beta }(x;q) \overline{E^{\sigma ^{-1}}_{\alpha }(x;q)}))$,
as desired.
\end{proof}

\begin{remark}\label{rem:general-type}
All definitions and results in \S\S
\ref{ss:GL-characters}--\ref{ss:Hall-Littlewood} and
\ref{ss:Hecke}--\ref{ss:L-beta-alpha} extend naturally from the weight
lattice and root system of $\GL_{l}$ to those of any reductive
algebraic group $G$, as in \cite{GrojHaim07}.  The reader may observe
that apart from changes in notation, the arguments given here also
apply in the general case.
\end{remark}

\subsection{Tableaux for LLT series}
\label{ss:LLT-tableaux}

We now work out a tableau formalism which relates the polynomial part
$\Lcal ^{\sigma }_{\beta /\alpha }(x;q)_{\pol }$ to a combinatorial
LLT polynomial $\Gcal _{\nu }(x;q)$.

\begin{lemma}\label{lem:ek-on-E-lambda}
For all $\sigma \in S_{l}$, $\lambda \in \ZZ ^{l}$ and $k$,
the product of the elementary symmetric function $e_{k}(x)$ and the
non-symmetric Hall-Littlewood polynomial $E^{\sigma^{-1} }_{\lambda
}(x;q)$ is given by
\begin{equation}\label{e:ek-on-E-lambda}
e_{k}(x)\, E^{\sigma ^{-1}}_{\lambda }(x;q) = \sum _{|I| = k}
q^{-h_{I}} E^{\sigma ^{-1}}_{\lambda + \varepsilon _{I}}(x;q),
\end{equation}
where $I\subseteq \{1,\ldots,l \}$ has $k$ elements, $\varepsilon _{I}
= \sum _{i\in I}\varepsilon _{i}$ is the indicator vector of $I$, and
\begin{equation}\label{e:hI}
h_{I} = |\Inv (\lambda + \varepsilon _{I} + \epsilon \sigma )\setminus
\Inv (\lambda + \epsilon \sigma )|.
\end{equation}
Equivalently, $h_{I}$ is the number of pairs $i<j$ such that $i\in I$,
$j\not \in I$, and we have $\lambda _{j} = \lambda _{i}$ if $\sigma
(i)<\sigma (j)$, or $\lambda _{j} = \lambda _{i}+1$ if $\sigma
(i)>\sigma (j)$.
\end{lemma}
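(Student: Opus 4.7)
The plan is to prove the identity by induction on $|\Inv(-\lambda)|$, the number of pairs $i<j$ with $\lambda_i<\lambda_j$, using the recurrence \eqref{e:E-recurrence} to reduce to the base case $\lambda=\lambda_+$ dominant. The key observation is that $e_k(x)$, being symmetric in $x_1,\ldots,x_l$, commutes with every Demazure--Lusztig operator $T_i$ and its inverse, because $T_i$ commutes with multiplication by any $s_i$-invariant function, and $e_k$ is symmetric under every simple transposition.

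For the inductive step, choose $i$ with $\lambda_i<\lambda_{i+1}$ and apply \eqref{e:E-recurrence} to write $E^{\sigma^{-1}}_{\lambda}=q^{a}\,T_i^{\eta}\,E^{(\sigma s_i)^{-1}}_{s_i\lambda}$ for appropriate $a\in\{-1,0,1\}$ and sign $\eta\in\{+1,-1\}$. Multiplying by $e_k$, commuting past $T_i^{\eta}$, and applying the induction hypothesis at $(\sigma s_i,\,s_i\lambda)$ yields an expansion of $e_k\,E^{\sigma^{-1}}_{\lambda}$ as a $q$-weighted sum of polynomials $T_i^{\eta}E^{(\sigma s_i)^{-1}}_{s_i\lambda+\varepsilon_J}$. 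Reindex via $J=s_i(I)$ so that $s_i\lambda+\varepsilon_J=s_i(\lambda+\varepsilon_I)$, and apply \eqref{e:E-recurrence} in reverse to rewrite each such term as a $q$-multiple of $E^{\sigma^{-1}}_{\lambda+\varepsilon_I}$. The identity then has the desired form provided the accumulated $q$-exponent equals $-h_I^{\sigma}(\lambda)$; this is verified by a case-by-case comparison of the four inversion sets $\Inv(\lambda+\varepsilon_I+\epsilon\sigma)$, $\Inv(\lambda+\epsilon\sigma)$, $\Inv(s_i\lambda+\varepsilon_{s_iI}+\epsilon\sigma s_i)$, and $\Inv(s_i\lambda+\epsilon\sigma s_i)$, in which only pairs involving the swapped positions $i$ and $i+1$ contribute differences.

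For the base case $\lambda=\lambda_+$ dominant, $E^{\sigma^{-1}}_{\lambda}=x^{\lambda}$ and the LHS of the claimed identity becomes $\sum_{|I|=k}x^{\lambda+\varepsilon_I}$. Applying Lemma \ref{lem:factorization} at each strict descent of $\lambda$ factors both $E^{\sigma^{-1}}_{\lambda+\varepsilon_I}$ and the product $e_k\cdot x^{\lambda}$ compatibly, reducing the problem to the constant case $\lambda=(c,\ldots,c)$. In the constant case $h_I^{\sigma}(\lambda)$ simplifies to $|\{i<j:i\in I,\,j\notin I,\,\sigma(i)<\sigma(j)\}|$, and the identity is verified by direct Hecke-algebra computation: expand each $E^{\sigma^{-1}}_{(c,\ldots,c)+\varepsilon_I}$ via the recurrence (which terminates at the dominant rearrangement $((c+1)^k,c^{l-k})$) and check that the resulting sum telescopes using the triangular form \eqref{e:E-monic}. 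The main obstacle throughout is the bookkeeping of $q$-powers, which requires a case analysis based on the relative values of $\lambda_i,\lambda_{i+1}$ and $\sigma(i),\sigma(i+1)$ and on which of $i,i+1$ belong to $I$.
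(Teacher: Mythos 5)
Your approach is genuinely different from the paper's. The paper proves the case $\sigma=1$ in one shot: writing $\lambda=w(\lambda_+)$ with $w$ \emph{maximal} in its coset $w\cdot\Stab(\lambda_+)$, it commutes $e_k$ past the whole $T_w$, and then for each $J$ factors $T_w=T_uT_v$ reduced, where $v$ is the minimal sorting permutation for $\mu=\lambda_++\varepsilon_J$ and $T_vx^\mu=x^{\mu_+}$; the length count $\ell(v)=h_I$ yields the formula immediately. The paper then passes to general $\sigma$ by conjugating both sides by $T_\sigma^{-1}$ and verifying a combinatorial cancellation of inversion sets. Your plan instead keeps $\sigma$ general throughout and inducts on $|\Inv(-\lambda)|$ via the one-step recurrence \eqref{e:E-recurrence}, so you trade the paper's clever $T_w=T_uT_v$ factorization for a step-by-step bookkeeping of how $h_I$, the indicator exponent in \eqref{e:E-recurrence}, and the set $s_iI$ interact. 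That bookkeeping is real work that you assert but do not carry out; I've spot-checked it and it does go through, though it is not a formality.

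The genuine gap is the base case. Reducing to $\lambda$ dominant, then factoring at strict descents via Lemma~\ref{lem:factorization} to get to $\lambda=(c,\ldots,c)$, is fine (and the factorization of $e_k$, $E^{\sigma^{-1}}_{\lambda+\varepsilon_I}$, and $h_I$ across blocks does all check out). But the resulting claim $e_k(x)=\sum_{|I|=k}q^{-h_I}E^{\sigma^{-1}}_{\varepsilon_I}(x;q)$ (after stripping $(\prod_ix_i)^c$) is not an easy identity: the $E^{\sigma^{-1}}_{\varepsilon_I}$ for non-dominant $\varepsilon_I$ are genuine Hecke-algebra expressions with many lower-order terms, and this is precisely the lemma at $\lambda=0$. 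Your stated mechanism — expand each $E^{\sigma^{-1}}_{\varepsilon_I}$ via the recurrence and observe that ``the resulting sum telescopes using the triangular form \eqref{e:E-monic}'' — does not hold up: collecting the coefficient of a fixed $x^{\varepsilon_J}$ gives $q^{-h_J}$ plus a sum of lower-order contributions from each $E^{\sigma^{-1}}_{\varepsilon_I}$ with $\varepsilon_I>\varepsilon_J$, and there is no telescoping; showing the total equals $1$ is exactly the content of the identity. In the paper this case is absorbed into the $T_w=T_uT_v$ argument with $w=w_0$; you would need to reproduce something equivalent (or invoke duality with $F^{\sigma^{-1}}$, which is no easier). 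As written, the base case is asserted rather than proved.
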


\begin{proof}
First consider the case $\sigma =1$.  Being symmetric, $e_{k}(x)$
commutes with $T_{w}$, giving
\begin{equation}\label{e:ek-E-lambda}
e_{k}\, E_{\lambda } = q^{-\ell(w)} T_{w}\, e_{k}\, x^{\lambda _{+}} =
q^{-\ell(w)}\sum _{|J| = k} T_{w}\, x^{\lambda
_{+}+\varepsilon _{J}},
\end{equation}
where $\lambda = w(\lambda _{+})$, as in \eqref{e:E-lambda}.  To fix
the choice, we take $w$ maximal in its coset $w\cdot \Stab (\lambda
_{+})$.

In each term of the sum in \eqref{e:ek-E-lambda}, the weight $\mu
=\lambda _{+}+\varepsilon _{J}$ can fail to be dominant at worst by
having some entries $\mu _{j} = \mu _{i}+1$ for indices $i<j$ such
that $(\lambda _{+})_{i} = (\lambda _{+})_{j}$, $i\not \in J$ and
$j\in J$.  Let $v$ be the minimal permutation such that $\mu _{+} =
v(\mu )$, that is, the permutation that moves indices $j\in J$ to the
left within each block of constant entries in $\lambda _{+}$.  The
formula $T_{i} x_{i}^{a}x_{i+1}^{a+1} = x_{i}^{a+1}x_{i+1}^{a}$ is
immediate from the definition of $T_{i}$, and implies that $T_{v}
x^{\mu } = x^{\mu _{+}}$.  By the maximality of $w$, since $v\in \Stab
(\lambda _{+})$, there is a reduced factorization $w = uv$, hence
$T_{w} = T_{u}T_{v}$.  Then
\begin{equation}\label{e:Tw-term}
T_{w}\, x^{\lambda _{+}+\varepsilon _{J}} = T_{u} x^{\mu _{+}} =
q^{\ell(u)} E_{\lambda +w(\varepsilon _{J})},
\end{equation}
since $\lambda +w(\varepsilon _{J}) = w(\mu ) = u(\mu _{+})$.

Now, $\ell(v)$ is equal to the number of pairs $i<j$ such that $\mu
_{i}<\mu _{j}$, that is, such that $(\lambda _{+})_{i}= (\lambda
_{+})_{j}$, $i\not \in J$ and $j\in J$.  By maximality, the
permutation $w$ carries these to the pairs $j' = w(i)$, $i' = w(j)$
such that $i' < j'$, $\lambda _{i'} = \lambda _{j'}$, $i'\in I$ and
$j'\not \in I$, where $I = w(J)$.  For $\sigma =1$, the definition of
$h_{I}$ is the number of such pairs $i',j'$, so we have $\ell(u)-\ell(w) =
-\ell(v) = -h_{I}$.  Hence, the term for $J$ in \eqref{e:ek-E-lambda} is
$q^{-\ell(w)} T_{w} x^{\lambda _{+}+\varepsilon _{J}} = q^{-h_{I}}
E_{\lambda +\varepsilon _{I}}$.  As $J$ ranges over subsets of size
$k$, so does $I = w(J)$, giving \eqref{e:ek-on-E-lambda} in this case.

Substituting $\sigma (\lambda )$ for $\lambda $ and $\sigma (I)$ for
$I$ in the $\sigma =1$ case, and acting on both sides with $T_{\sigma
}^{-1}$, yields
\begin{equation}\label{e:ek-on-E-lambda-twist}
q^{-|\Inv (\sigma )\cap \Inv (\lambda +\epsilon \rho )|} e_{k}\,
E^{\sigma ^{-1}}_{\lambda } = \sum _{|I|=k} q^{-|\Inv (\sigma (\lambda
+\varepsilon _{I}))\setminus \Inv (\sigma (\lambda ))|} q^{-|\Inv
(\sigma )\cap \Inv (\lambda +\varepsilon _{I} + \epsilon \rho )|}
E^{\sigma ^{-1}}_{\lambda +\varepsilon _{I}}.
\end{equation}

Combining powers of $q$ gives the desired identity
\eqref{e:ek-on-E-lambda} if we verify that
\begin{multline}\label{e:Inv-identity}
|\Inv (\sigma )\cap \Inv (\lambda +\varepsilon _{I} + \epsilon \rho )|
- |\Inv (\sigma )\cap \Inv (\lambda +\epsilon \rho )|\\
 = |\Inv (\lambda + \varepsilon _{I} + \epsilon \sigma )\setminus \Inv
(\lambda + \epsilon \sigma )| - |\Inv (\sigma (\lambda +\varepsilon
_{I}))\setminus \Inv (\sigma (\lambda ))|.
\end{multline}
On the left hand side, cancelling the contribution from the
intersection of the two sets leaves
\begin{equation}\label{e:Inv-LHS}
|\Inv (\sigma )\cap (\Inv (\lambda +\varepsilon _{I} + \epsilon \rho
)\setminus \Inv (\lambda + \epsilon \rho ))| - |\Inv (\sigma )\cap
(\Inv (\lambda +\epsilon \rho )\setminus \Inv (\lambda +\varepsilon
_{I} + \epsilon \rho ))|.
\end{equation}
The first term in \eqref{e:Inv-LHS} counts pairs $i<j$ such that $i\in
I$, $j\not \in I$, $\sigma (i)>\sigma (j)$, and $\lambda _{j} = \lambda
_{i}+1$.
The second term counts pairs $i>j$ such that $i\in I$, $j\not \in I$,
$\sigma (i)<\sigma (j)$, and $\lambda _{j} = \lambda _{i}$.
The first term on the right hand side of \eqref{e:Inv-identity} counts
pairs $i<j$ such that $i\in I$, $j\not \in I$, and $\lambda _{j} =
\lambda _{i}$ if $\sigma (i)<\sigma (j)$, or $\lambda _{j} = \lambda
_{i}+1$ if $\sigma (i)>\sigma (j)$.  The second term on the right hand
side of \eqref{e:Inv-identity} counts the set of pairs whose images
under $\sigma ^{-1}$ are pairs $i,j$ (in either order) such that $i\in
I$, $j\not \in I$, $\sigma (i)<\sigma (j)$ and $\lambda _{i} = \lambda
_{j}$.  The cases in the first term with $\sigma (i)<\sigma (j)$
cancel those in the second term with $i<j$.  The remaining cases in
the first term, with $\sigma (i)>\sigma (j)$, match the first term in
\eqref{e:Inv-LHS}, while the remaining cases in the second term, with
$i>j$, match the second term in \eqref{e:Inv-LHS}.  This proves
\eqref{e:Inv-identity} and completes the proof of the lemma.
\end{proof}

\begin{figure}
\[
\begin{array}{cc}
% Tableau S
\begin{tikzpicture}[scale=.425]
\draw [yshift=3cm] (1,0) grid (4,1);
\node [left] at (1,3.5) {$-\infty \!$};
\node [right] at (4,3.5) {$\! \infty $};
\foreach \x/\nn in {1.5/4, 2.5/8, 3.5/9}{\node at (\x,3.5) {$\nn$};}

\draw [yshift=1.5cm] (0,0) grid (2,1);
\node [left] at (0,2) {$-\infty \!$};
\node [right] at (2,2) {$\! \infty $};
\foreach \x/\nn in {.5/1, 1.5/3}{\node at (\x,2) {$\nn$};}

\draw (1,0) grid (5,1);
\node [left] at (1,.5) {$-\infty \! $};
\node [right] at (5,.5) {$\! \infty $};
\foreach \x/\nn in {1.5/2, 2.5/5, 3.5/6, 4.5/7}{\node at (\x,.5) {$\nn$};}
\end{tikzpicture}
&
% Triples for sigma=1
\begin{tikzpicture}[scale=.425]
\draw [shift={(0,3)},thick] (0,0) grid (1,1);
\node [left] at (0,3.5) {$-\infty \!$};
\node at (.5,3.5) {$\mathbf 4$};
\draw [shift={(0,1.5)},thick] (0,0) grid (1,1);
\node at (.5,2) {$\mathbf 3$};

\draw [shift={(4,3)},thick] (0,0) grid (1,1);
\node [left] at (4,3.5) {$-\infty \!$};
\node at (4.5,3.5) {$\mathbf 4$};
\draw [shift={(4,0)},thick] (0,0) grid (1,1);
\node at (4.5,.5) {$\mathbf 2$};

\draw [shift={(8,3)},thick] (-1,0) grid (1,1);
\node at (7.5,3.5) {$\mathbf 4$};
\node at (8.5,3.5) {$\mathbf 8$};
\draw [shift={(8,0)},thick] (0,0) grid (1,1);
\node at (8.5,.5) {$\mathbf 5$};

\draw [shift={(12,3)}] (-1,0) grid (1,1);
\node at (11.5,3.5) {$8$};
\node at (12.5,3.5) {$9$};
\draw [shift={(12,0)}] (0,0) grid (1,1);
\node at (12.5,.5) {$6$};

\draw [shift={(16,3)}] (-1,0) grid (0,1);
\node at (15.5,3.5) {$9$};
\node [right] at (16,3.5) {$\! \infty $};
\draw [shift={(16,0)}] (0,0) grid (1,1);
\node at (16.5,.5) {$7$};

\draw [shift={(20,1.5)},thick] (-1,0) grid (1,1);
\node at (19.5,2) {$\mathbf 1$};
\node at (20.5,2) {$\mathbf 3$};
\draw [shift={(20,0)},thick] (0,0) grid (1,1);
\node at (20.5,.5) {$\mathbf 2$};

\draw [shift={(24,1.5)},thick] (-1,0) grid (0,1);
\node at (23.5,2) {$\mathbf 3$};
\node [right] at (24,2) {$\! \infty $};
\draw [shift={(24,0)},thick] (0,0) grid (1,1);
\node at (24.5,.5) {$\mathbf 5$};

\end{tikzpicture} \\[1ex]
S & \text{$\sigma $-triples for $\sigma =1$}\\[3ex]
&
% Triples for sigma=(3,1,2)
\begin{tikzpicture}[scale=.425]
\draw [shift={(0,3)},thick] (0,0) grid (1,1);
\node [left] at (0,3.5) {$-\infty \!$};
\node at (.5,3.5) {$\mathbf 4$};
\draw [shift={(0,1.5)},thick] (0,0) grid (1,1);
\node at (.5,2) {$\mathbf 3$};

\draw [shift={(4,3)}] (-1,0) grid (1,1);
\node at (3.5,3.5) {$4$};
\node at (4.5,3.5) {$8$};
\draw [shift={(4,0)}] (-1,0) grid (0,1);
\node at (3.5,.5) {$2$};

\draw [shift={(8,3)}] (-1,0) grid (1,1);
\node at (7.5,3.5) {$8$};
\node at (8.5,3.5) {$9$};
\draw [shift={(8,0)}] (-1,0) grid (0,1);
\node at (7.5,.5) {$5$};

\draw [shift={(12,3)}] (-1,0) grid (0,1);
\node at (11.5,3.5) {$9$};
\node [right] at (12,3.5) {$\! \infty $};
\draw [shift={(12,0)}] (-1,0) grid (0,1);
\node at (11.5,.5) {$6$};

\draw [shift={(16,1.5)}] (-1,0) grid (0,1);
\node at (15.5,2) {$3$};
\node [right] at (16,2) {$\! \infty $};
\draw [shift={(16,0)}] (-1,0) grid (0,1);
\node at (15.5,.5) {$2$};
\end{tikzpicture} \\
& \text{$\sigma $-triples for $\sigma = (1,2,3)\mapsto (3,1,2)$}
\end{array}
\]
\caption{\label{fig:sigma triples}
A negative tableau $S$ on $\beta/\alpha = (6,3,5)/(2,1,2)$ and the
$\sigma$-triples in $\beta/\alpha$, for two choices of $\sigma $,
shown with their entries from $S$.  Triples in boldface are increasing
in $S$.}
\end{figure}

Given $\alpha ,\beta \in \ZZ ^{l}$ such that $\alpha _{j}\leq \beta
_{j}$ for all $j$, we let $\beta /\alpha $ denote the tuple of
one-row skew
%
%Young diagrams
%
shapes $(\beta _{j})/(\alpha _{j})$ such that the $x$ coordinates of
the right edges of boxes $a$ in the $j$-th row are the integers
$\alpha _{j}+1,\ldots,\beta _{j}$.  The boxes just outside the $j$-th
row, adjacent to the left and right ends of the row, then have $x$
coordinates $\alpha _{j}$ and $\beta _{j}+1$.  In the case of an empty
row with $\alpha _{j} = \beta _{j}$, we still consider these two boxes
to be adjacent to the ends of the row.

For each box $a$, let $i(a)$ denote the $x$ coordinate of its right
edge and $j(a)$ the index of the row containing it.

For $\sigma \in S_{l}$, we define $\sigma (\beta /\alpha ) = \sigma
(\beta )/\sigma (\alpha )$.  If $a$ is a box in row $j(a)$ of $\beta
/\alpha $, then $\sigma (a)$ denotes the corresponding box with
coordinate $i(\sigma (a)) = i(a)$ in row $\sigma (j(a))$ of $\sigma
(\beta /\alpha )$.  The adjusted content of $\sigma (a)$, as defined
in \S \ref{ss:G-nu}, is then $\ctild (\sigma (a)) = i(a)+\epsilon
\sigma (j(a))$. Hence, the reading order on $\sigma (\beta /\alpha )$
corresponds via $\sigma $ to the ordering of boxes in $\beta /\alpha $
by increasing values of $i(a)+\epsilon \sigma (j(a))$.

We define a {\em $\sigma $-triple} in $\beta /\alpha $ to consist of
any three boxes $(a,b,c)$ arranged as follows: boxes $a$ and $c$ are
in or adjacent to the same row $j(a) = j(c)$, and are consecutive,
that is, $i(c) = i(a)+1$, while box $b$ is in a row $j(b)<j(a)$, and
we have $a<b<c$ in the ordering corresponding to the reading order on
$\sigma (\beta /\alpha )$.  More explicitly, this means that if
$\sigma (j(b))<\sigma (j(a))$, then $i(b) = i(c)$, while if $\sigma
(j(b))>\sigma (j(a))$, then $i(b) = i(a)$.  The box $b$ is required to
be a box of $\beta /\alpha $, but box $a$ is allowed to be outside and
adjacent to the left end of a row, while $c$ is similarly allowed to
be outside and adjacent to the right end of a row.

An example of a tuple $\beta /\alpha $, with all its $\sigma $-triples
for two different choices of $\sigma $, is shown in Figure
\ref{fig:sigma triples}.

A negative tableau on $\beta /\alpha $ is a map $S\colon \beta /\alpha
\rightarrow \ZZ _{+}$ strictly increasing on each row.  In the
terminology of \S \ref{ss:G-nu}, $S$ is a super tableau on $\beta
/\alpha $ with entries in $\ZZ _{+}$, considered as a negative
alphabet ordered by $\overline{1}<\overline{2}<\cdots $.  We say that
a $\sigma $-triple $(a,b,c)$ in $\beta /\alpha $ is {\em increasing}
in $S$ if $S(a)<S(b)<S(c)$, with the convention that $S(a) = -\infty $
if $a$ is just outside the left end of a row, and $S(c) = \infty $ if
$c$ is just outside right end of a row.  Along with the $\sigma
$-triples in $\beta /\alpha $, Figure \ref{fig:sigma triples} also
displays which triples are increasing in a sample tableau $S$.

\begin{prop}\label{prop:e-mu-coef}
Given $\alpha ,\beta \in \ZZ ^{l}$ such that $\alpha _{i}\leq \beta
_{i}$ for all $i$, and $\sigma \in S_{l}$, let
\begin{equation}\label{e:triples-N}
N^{\sigma }_{\beta /\alpha }(X;q) = 
\sum_{S\in\SSYT_{-}(\beta/\alpha)} q^{h_\sigma(S)} x^S
\end{equation}
be the generating function for negative tableaux $S$ on the tuple of
one-row skew diagrams $(\beta _{i})/(\alpha _{i})$, weighted by
$q^{h_{\sigma }(S)}$, where $h_{\sigma }(S)$ is the number of
increasing $\sigma $-triples in $S$.  Then $N^{\sigma }_{\beta /\alpha
}(X;q)$ is a symmetric function, and $\omega N^{\sigma }_{\beta
/\alpha }(X;q)$ evaluates in $l$ variables to
\begin{equation}\label{e:N-vs-Lcal}
(\omega N^{\sigma }_{\beta /\alpha })(x_{1},\ldots,x_{l};q) = \Lcal ^{\sigma
}_{\beta /\alpha }(x_{1},\ldots,x_{l};q)_{\pol }.
\end{equation}
If we do not have $\alpha _{i}\leq \beta _{i}$ for all $i$, then
$\Lcal ^{\sigma }_{\beta /\alpha }(x;q)_{\pol } = 0$.
\end{prop}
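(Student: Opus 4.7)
The plan is to derive both claims from iterated applications of Lemma~\ref{lem:ek-on-E-lambda}.

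For the vanishing case, observe that every term on the right of \eqref{e:ek-on-E-lambda} involves a weight $\lambda + \varepsilon_{I}$ satisfying $\lambda + \varepsilon_{I} \ge \lambda$ componentwise. Using the dual Jacobi-Trudi formula $\chi_{\lambda} = \det(e_{\lambda^{*}_{i} - i + j})_{i,j}$ to expand $\chi_{\lambda}$ as a polynomial in the $e_{k}$'s and iterating, $\chi_{\lambda}(x) \cdot E^{\sigma^{-1}}_{\alpha}(x; q^{-1})$ is a linear combination of $E^{\sigma^{-1}}_{\gamma}(x; q^{-1})$ with $\gamma \ge \alpha$. Hence if $\beta_{i} < \alpha_{i}$ for some $i$, the coefficient of $E^{\sigma^{-1}}_{\beta}(x; q^{-1})$ vanishes for every polynomial $\lambda$, giving $\Lcal^{\sigma}_{\beta/\alpha}(x;q)_{\pol} = 0$.

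For the main assertion (assuming $\alpha \le \beta$), I will iterate Lemma~\ref{lem:ek-on-E-lambda} with $q$ replaced by $q^{-1}$ to obtain, for any composition $\mu = (\mu_{1}, \ldots, \mu_{k})$,
\begin{equation}\label{e:plan-pieri}
e_{\mu_{k}}(x) \cdots e_{\mu_{1}}(x) \cdot E^{\sigma^{-1}}_{\alpha}(x; q^{-1}) = \sum_{\gamma} \Bigl( \sum_{T} q^{\operatorname{stat}_{\sigma}(T)} \Bigr)\, E^{\sigma^{-1}}_{\gamma}(x; q^{-1}),
\end{equation}
where the inner sum is over negative tableaux $T \colon \gamma/\alpha \to \{1, \ldots, k\}$ of content $\mu$, and $\operatorname{stat}_{\sigma}(T) = \sum_{j=1}^{k} h_{I_{j}}$ with $I_{j} = T^{-1}(j)$, each $h_{I_{j}}$ as in \eqref{e:hI} computed relative to the intermediate weight $\alpha + \sum_{j' < j} \varepsilon_{I_{j'}}$. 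The central combinatorial step is the identity
\begin{equation}\label{e:plan-stat}
\operatorname{stat}_{\sigma}(T) = h_{\sigma}(T),
\end{equation}
which I will prove by a bijection between contributions to a single $h_{I_{j}}$ and increasing $\sigma$-triples $(a, b, c)$ in $T$ with $T(b) = j$: a pair $i < j'$ with $i \in I_{j}$, $j' \notin I_{j}$ contributing to $h_{I_{j}}$ records the row-indices of $b$ (namely $i$) and of $\{a, c\}$ (namely $j'$), while the two alternatives $\lambda_{j'} = \lambda_{i}$ with $\sigma(i) < \sigma(j')$ and $\lambda_{j'} = \lambda_{i} + 1$ with $\sigma(i) > \sigma(j')$ correspond to the two possible positional alignments of $a, c$ in row $j'$ relative to $b$; the inequalities $T(a) < T(b) < T(c)$ are automatic because $a$ is either already placed at a step $T(a) < j$ or outside the row (with $T(a) = -\infty$) while $c$ is either placed at a later step $T(c) > j$ or outside the row (with $T(c) = +\infty$).

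Extracting the coefficient of $E^{\sigma^{-1}}_{\beta}(x; q^{-1})$ from \eqref{e:plan-pieri} and expanding $\chi_{\lambda}$ by dual Jacobi-Trudi yields
\begin{equation}\label{e:plan-clambda}
c_{\lambda} := \langle E^{\sigma^{-1}}_{\beta}(x; q^{-1}) \rangle\, \chi_{\lambda}(x) \cdot E^{\sigma^{-1}}_{\alpha}(x; q^{-1}) = \sum_{w \in S_{\lambda_{1}}} (-1)^{\ell(w)} \sum_{T} q^{h_{\sigma}(T)},
\end{equation}
with the inner sum over $T \in \SSYT_{-}(\beta/\alpha)$ of content $(\lambda^{*}_{i} - i + w(i))_{i}$. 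Summing $\sum_{\lambda} c_{\lambda} \chi_{\lambda}(x)$ over polynomial $\lambda$ and interchanging the orders of summation, the alternating $w$-sum reassembles via the dual Jacobi-Trudi identity $\sum_{w} (-1)^{\ell(w)} K_{\mu^{*},\, (\lambda^{*}_{i} - i + w(i))} = \delta_{\lambda,\mu}$ to produce the Schur expansion of the $\omega$-image of the monomial generating function $\sum_{T} q^{h_{\sigma}(T)} x^{T}$, giving $\Lcal^{\sigma}_{\beta/\alpha}(x; q)_{\pol} = \omega N^{\sigma}_{\beta/\alpha}(x_{1}, \ldots, x_{l}; q)$. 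Since the left-hand side is manifestly a symmetric polynomial (a sum of Weyl characters), this equality simultaneously establishes identity \eqref{e:N-vs-Lcal} and, by taking $l$ arbitrarily large, the symmetry of $N^{\sigma}_{\beta/\alpha}(X; q)$. The main obstacle is verifying \eqref{e:plan-stat}; the reassembly from \eqref{e:plan-pieri} to \eqref{e:N-vs-Lcal} is then routine given dual Jacobi-Trudi.
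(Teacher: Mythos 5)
Your proof shares the paper's central machinery---iterating Lemma~\ref{lem:ek-on-E-lambda} and the identification of the resulting exponent with the number of increasing $\sigma$-triples (your~\eqref{e:plan-stat}, which matches the bookkeeping carried out in the middle of the paper's proof)---but the outer layer of the argument is genuinely different, and I think less efficient. The paper exploits the observation that identity~\eqref{e:G-anyf} holds for \emph{any} symmetric function $f$, and then takes $f = e_{\mu}$. By duality $\langle h_{\mu}, m_{\nu}\rangle = \delta_{\mu\nu}$ and $\omega e_{\mu} = h_{\mu}$, this directly reads off the coefficient of $m_{\mu}$ in $\omega L^{\sigma}_{\beta/\alpha}$, so that iterating Lemma~\ref{lem:ek-on-E-lambda} immediately produces the tableau generating function $N^{\sigma}_{\beta/\alpha}$ monomial-by-monomial, with no signs anywhere. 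You instead keep $f = \chi_{\lambda}$, expand it via dual Jacobi--Trudi, and are then forced to `reassemble' an alternating sum over $w \in S_{m}$.

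The reassembly step is where I see a gap. As written, the claim that the alternating $w$-sum collapses ``via the dual Jacobi--Trudi identity $\sum_{w}(-1)^{\ell(w)} K_{\mu^{*},(\lambda^{*}_{i}-i+w(i))} = \delta_{\lambda,\mu}$'' doesn't directly apply: the quantities entering your alternating sum are the tableau generating functions $n_{\gamma} = \sum_{T \text{ of content } \gamma} q^{h_{\sigma}(T)}$, not Kostka numbers, and collapsing $\sum_{\lambda} \chi_{\lambda} \sum_{w}(-1)^{\ell(w)} n_{(\lambda^{*}_{i}-i+w(i))}$ into the Schur expansion of $\omega N^{\sigma}_{\beta/\alpha}$ requires \emph{first} knowing that $n_{\gamma}$ depends only on the partition rearrangement $\gamma_{+}$, i.e.\ that $N^{\sigma}_{\beta/\alpha}$ is symmetric. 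Your final sentence asserts that symmetry of $N^{\sigma}_{\beta/\alpha}$ is a consequence of the equality~\eqref{e:N-vs-Lcal}, but that equality is exactly what the reassembly is meant to establish, so this is circular as stated. The gap is fixable---the iterated lemma applies equally to any composition $\gamma$ and yields $n_{\gamma} = \langle e_{\gamma}, L^{\sigma}_{\beta/\alpha}\rangle$, and since $e_{\gamma}$ depends only on the multiset of parts, $n_{\gamma} = n_{\gamma_{+}}$---but once you make that observation you have essentially recovered the paper's $e_{\mu}$-pairing argument and the dual Jacobi--Trudi detour becomes superfluous. I'd recommend dropping the Jacobi--Trudi expansion and pairing with $e_{\mu}$ directly, as the paper does; it shortens the proof, eliminates all signs, and makes the symmetry of $N^{\sigma}_{\beta/\alpha}$ an immediate byproduct rather than something to be argued for separately.
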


\begin{proof}
Let $L^{\sigma}_{\beta/\alpha}(X;q)$
be the unique symmetric function such that (i)
$L^{\sigma }_{\beta /\alpha }(X;q)$ is a linear combination of Schur
functions $s_{\lambda }$ with $\ell(\lambda )\leq l$, and (ii) in $l$
variables, it evaluates to
\begin{equation}\label{e:L-vs-Lcal}
L^{\sigma }_{\beta /\alpha }(x_{1},\ldots,x_{l};q) = \Lcal ^{\sigma
}_{\beta /\alpha }(x_{1},\ldots,x_{l};q)_{\pol }.
\end{equation}
What we need to prove is that $\omega\, L^{\sigma }_{\beta /\alpha
}(X;q) = N^{\sigma }_{\beta /\alpha }(X;q)$.

The definition of $\Lcal ^{\sigma }_{\beta /\alpha }(x;q)$ implies
that $L^{\sigma }_{\beta /\alpha }(X;q)$ satisfies
\begin{equation}\label{e:L(X)}
\langle s_{\lambda }(X), L^{\sigma }_{\beta /\alpha }(X;q) \rangle =
\langle E^{\sigma ^{-1}}_{\beta }(x;q^{-1}) \rangle\, s_{\lambda
}(x_{1},\ldots,x_{l})\, E^{\sigma ^{-1}}_{\alpha }(x;q^{-1})
\end{equation}
for every partition $\lambda $, including when $\ell(\lambda ) > l$,
since then both sides are zero.  By linearity, we can replace
$s_\lambda$ by any symmetric function $f$, giving
\begin{equation}\label{e:G-anyf}
\langle f(X),\, L^\sigma_{\beta/\alpha}(X;q)
\rangle = \langle E^{\sigma ^{-1}}_{\beta
}(x;q^{-1}) \rangle\, f(x)\, E^{\sigma ^{-1}}_{\alpha
}(x;q^{-1})\,.
\end{equation}
The coefficient of $m_\mu(X)$ in $\omega\,
L^\sigma_{\beta/\alpha}(X;q)$ is given by taking $f=e_\mu$.

To show that $\omega\, L^{\sigma }_{\beta /\alpha }(X;q)$ is given by
the tableau generating function in \eqref{e:triples-N}, we use Lemma
\ref{lem:ek-on-E-lambda} to express
\begin{equation}\label{e:e-mu-coef}
\langle E^{\sigma ^{-1}}_{\beta }(x;q^{-1}) \rangle\,  e_{\mu  }(x)\,
E^{\sigma ^{-1}}_{\alpha }(x;q^{-1})
\end{equation}
as a sum of powers of $q$ indexed by negative tableaux.  In
particular, this coefficient will vanish unless we have $\alpha
_{i}\leq \beta _{i}$ for all $i$, giving the last conclusion in the
proposition.

Multiplying by $e_{\mu _{1}}$ through $e_{\mu _{n}}$ successively and
keeping track of one chosen term in each product gives a sequence of
terms $E^{\sigma ^{-1}}_{\alpha ^{(0)}}, E^{\sigma ^{-1}}_{\alpha
^{(1)}},\ldots, E^{\sigma ^{-1}}_{\alpha ^{(n)}}$, in which $\alpha
^{(0)} = \alpha $ and $\alpha ^{(m+1)} = \alpha ^{(m)} + \varepsilon
_{I}$ for a set of indices $I$ of size $\mu _{m}$, for each $m$.  Each
sequence with $\alpha ^{(n)} = \beta $ contributes to \eqref{e:e-mu-coef}.

If we record these data in the form of a tableau $S\colon \beta
/\alpha \rightarrow \ZZ _{+}$ with $S(a) = m$ for $a\in (\alpha
^{(m)}/\alpha ^{(m-1)})$, $S$ satisfies the condition that it is a
negative tableau of weight $x^{S} = x^{\mu }$.  The contribution to
\eqref{e:e-mu-coef} from the corresponding sequence of terms is the
product of the $q^{h_{I}}$ with $h_{I}$ as in \eqref{e:hI} for $k= \mu
_{m}$, $\lambda =\alpha ^{(m-1)}$, and $I$ the set of indices $j$ such
that $S(a) = m$ for some box $a$ in row $j$.

We now express the $h_{I}$ corresponding to $(\alpha ^{(m)}/\alpha
^{(m-1)}) = S^{-1}(\{m \})$ as an attribute of $S$.  For $h_{I}$ to
count a pair $i<j$, we must have $i\in I$, which means that $S(b) = m$
for a box $b$ in row $i$, and $j\not \in I$, and one of the following
two situations.

If $\sigma (i)<\sigma (j)$, we must have $\alpha ^{(m-1)}_{j} = \alpha
^{(m-1)}_{i}$.  Since there is no $m$ in row $j$ of $S$, this means
that the boxes $a$ and $c$ in row $j$ with coordinates $i(a) =
i(b)-1$, $i(c) = i(b)$ have $S(a)<S(b)<S(c)$, with the same convention
as above that $S(a) = -\infty $ if $a$ is to the left of a row of
$\beta /\alpha $, and $S(c) = \infty $ if $c$ is to the right of a
row.

If $\sigma (i)>\sigma (j)$, we must have $\alpha ^{(m-1)}_{j} = \alpha
^{(m-1)}_{i}+1$.  This means that the boxes $a$ and $c$ in row $j$
with coordinates $i(a) = i(b)$, $i(c) = i(b)+1$ have $S(a)<S(b)<S(c)$,
with the same convention as before.

These two cases establish that $h_{I}$ is equal to the number of
increasing $\sigma $-triples in $S$ for which $S(b) = m$.  Summing
them up gives the total number of increasing $\sigma $-triples,
implying that
the coefficient in \eqref{e:e-mu-coef} is
the sum of $q^{h_{\sigma }(S)}$ over negative tableaux $S$ of weight
$x^{S} = x^{\mu }$ on $\beta /\alpha $, where $h_{\sigma }(S)$ is the
number of increasing $\sigma $-triples in $S$.
\end{proof}

\begin{lemma}\label{l:triples2dinv}
Given $\sigma\in S_l$ and $\alpha ,\beta \in \ZZ ^{l}$ with $\alpha
_{i}\leq \beta _{i}$ for all $i$, for $T\in
\SSYT_{-}(\sigma(\beta/\alpha))$,
\begin{equation}
h_{\sigma }(\beta /\alpha )-\inv (T)= h_{\sigma }(S)\,,
\end{equation}
where $S = \sigma ^{-1}(T) = T\circ \sigma $.
\end{lemma}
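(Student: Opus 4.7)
The plan is to reduce the identity to a local calculation, one $\sigma$-triple at a time. I first want to establish a bijection between attacking pairs in $\sigma(\beta/\alpha)$ and ``halves'' of $\sigma$-triples in $\beta/\alpha$. Given an attacking pair $(p,q) = (\sigma(u),\sigma(v))$ in $\sigma(\beta/\alpha)$ with $p$ preceding $q$ in the reading order, the definition of the reading order forces either (A) $i(u)=i(v)$ with $\sigma(j(u))<\sigma(j(v))$, or (B) $i(v)=i(u)+1$ with $\sigma(j(u))>\sigma(j(v))$. In both cases, the box whose row index $j$ in $\beta/\alpha$ is smaller plays the role of $b$ in the unique $\sigma$-triple containing the pair, and the remaining box $a$ or $c$ is adjacent in the other row, with its $i$-coordinate completely determined by the triple conditions; a short check shows that the resulting $a$ or $c$ lies in the allowed range $[\alpha_{j},\beta_{j}+1]$, possibly as a virtual box. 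Conversely each $\sigma$-triple $(a,b,c)$ gives rise to attacking pairs $(\sigma(a),\sigma(b))$ and $(\sigma(b),\sigma(c))$ precisely when $a$, respectively $c$, is real.

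Next, I would analyze the contribution of each $\sigma$-triple to $\inv(T)$ under this bijection. The key observation is row-strictness: when both $a,c$ are real, $\sigma(a)$ and $\sigma(c)$ sit in the same row of $\sigma(\beta/\alpha)$ with $i(\sigma(c))=i(\sigma(a))+1$, so $T(\sigma(a))<T(\sigma(c))$; note also that $(\sigma(a),\sigma(c))$ itself is not attacking. Consequently it is impossible to have both $T(\sigma(a))\geq T(\sigma(b))$ and $T(\sigma(b))\geq T(\sigma(c))$, so the two attacking pairs from the triple contribute jointly either $0$ inversions (exactly when $T(\sigma(a))<T(\sigma(b))<T(\sigma(c))$, i.e.\ the triple is increasing in $S=T\circ\sigma$) or exactly $1$ inversion (when the triple is non-increasing). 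When $a$ is virtual, only the pair $(\sigma(b),\sigma(c))$ exists, the convention $S(a)=-\infty$ removes the first inequality from ``increasing,'' and the pair is an inversion iff the triple is non-increasing; the symmetric statement holds when $c$ is virtual, and when both are virtual the row is empty, there are no pairs, and the triple is trivially increasing.

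Summing the local contributions over all $\sigma$-triples and using the bijection yields
\begin{equation*}
\inv(T) = \#\{\sigma\text{-triples non-increasing in }S\} = h_\sigma(\beta/\alpha) - h_\sigma(S),
\end{equation*}
which is the desired identity. The main obstacle is really only bookkeeping: checking the bijection separately for Case A and Case B pairs and confirming that the virtual-box edge cases behave correctly under the $-\infty/\infty$ conventions. The substantive content is the row-strict inequality $T(\sigma(a))<T(\sigma(c))$, which forces at most one inversion per triple and thereby turns the count of inversions into a count of non-increasing triples.
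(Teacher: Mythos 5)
Your proof is correct and follows essentially the same route as the paper's: both identify attacking pairs in $\sigma(\beta/\alpha)$ with the pairs $(\sigma(a),\sigma(b))$ and $(\sigma(b),\sigma(c))$ coming from $\sigma$-triples, use the row-strictness $T(\sigma(a))<T(\sigma(c))$ to show each triple produces at most one inversion, observe that producing zero inversions is exactly the condition $S(a)<S(b)<S(c)$, and handle virtual boxes with the $\pm\infty$ conventions. You spell out the four subcases of the bijection and note the non-attacking status of $(\sigma(a),\sigma(c))$, details which the paper leaves to ``one can verify,'' but the underlying argument is the same.
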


\begin{proof}
Recall from~\S\ref{ss:G-nu} that an attacking inversion in a negative
tableau is defined by $T(a)\geq T(b)$, where $a, b$ is an attacking
pair with $a$ preceding $b$ in the reading order.

One can verify from the definition of $\sigma $-triple that the
attacking pairs in $\sigma (\beta /\alpha )$, ordered by the reading
order, are precisely the pairs $\sigma (a,b)$ or $\sigma (b,c)$ for
$(a,b,c)$ a $\sigma $-triple such that the relevant boxes are in
$\beta /\alpha $.  Moreover, every attacking pair occurs in this manner
exactly once.

If all three boxes of a $\sigma $-triple $(a,b,c)$ are in $\beta
/\alpha $, and $T$ is a negative tableau on $\sigma (\beta /\alpha )$,
then since $T(\sigma (a))<T(\sigma (c))$, at most one of the attacking
pairs $\sigma (a,b)$, $\sigma (b,c)$ can be an attacking inversion in
$T$.  The condition that neither pair is an attacking inversion is
that $S(a)<S(b)<S(c)$ in the negative tableau $S = \sigma ^{-1}(T) =
T\circ \sigma $ on $\beta /\alpha $.  This also holds for triples not
contained in $\beta /\alpha $ with our convention that $S(a) = -\infty
$ or $S(c) = \infty $ for $a$ or $c$ outside the tuple $\beta /\alpha
$.  Hence, the result follows.
\end{proof}
\begin{example}
Let  $S$ be as in Figure \ref{fig:sigma triples} and 
\[
\begin{tikzpicture}[scale =.4][yshift = 200]
\node[anchor=east] at (-.4,2) {$\sigma = (1,2,3)\mapsto (3,1,2)$, \ \ \qquad $ T = S \circ \sigma^{-1} = $};
\node at (6.3,2) { $,$};

\draw [shift={(0,1.5*2)}](6,1) grid (2,0);
\foreach \x/\nn in {2.5/2, 3.5/5, 4.5/6, 5.5/7}{\node at (\x,1.5*2+.5) { $\ifnum\nn=10 \infty \else \ifnum\nn=-10 -\infty \else \pgfmathprintnumber{\nn} \fi \fi $};}

\draw [shift={(0,1.5*1)}] (5,1) grid (2,0);
\foreach \x/\nn in {2.5/4, 3.5/8, 4.5/9}{\node at (\x,1.5*1+.5) { $\ifnum\nn=10 \infty \else \ifnum\nn=-10 -\infty \else \pgfmathprintnumber{\nn} \fi \fi $};}

\draw [shift={(0,1.5*0)}](3,1) grid (1,0);
\foreach \x/\nn in {1.5/1, 2.5/3}{\node at (\x,1.5*0+.5) { $\ifnum\nn=10 \infty \else \ifnum\nn=-10 -\infty \else \pgfmathprintnumber{\nn} \fi \fi $};}
\end{tikzpicture}
\]
so $T$ is a negative tableau on $\sigma(\beta/\alpha) =
(3,5,6)/(1,2,2)$.  Reading $T$ by reading order on
$\sigma(\beta/\alpha)$ gives $134285967$.  The pairs $(T(a),T(b))$ for
attacking inversions $(a,b)$ in $T$ are $(3,2)$, $(4,2)$, $(8,5)$, and
$(9,6)$, so $\inv(T) = 4$.  From the bottom row of Figure
\ref{fig:sigma triples}, we have $h_\sigma(\beta/\alpha) = 5$,
$h_\sigma(S)= 1$, so $h_\sigma(\beta/\alpha) - \inv(T) = h_\sigma(S)$
is indeed satisfied.

Now let $\sigma = 1$ and $T = S \circ \sigma^{-1} = S$.  Reading $T$
by reading order on $\beta/\alpha$ gives $123458697$.  The pairs
$(T(a),T(b))$ for the attacking inversions are $(8,6)$ and $(9,7)$, so
$\inv(T)= 2$.  From Figure \ref{fig:sigma triples}, we see
$h_\sigma(\beta/\alpha) = 7$, $h_\sigma(S)= 5$, so again
$h_\sigma(\beta/\alpha) - \inv(T) = h_\sigma(S)$ holds.
\end{example}
\begin{remark}
 If we define an increasing \(\sigma\)-triple for \(T \in
 \SSYT(\sigma(\beta/\alpha))\) to be any \(\sigma\)-triple \((a,b,c)\)
 satisfying
 \(T(a) \leq T(b) \leq T(c)\), then a similar argument gives the relation
\begin{equation}
h_{\sigma }(\beta /\alpha )-\inv (T)= h_{\sigma }(S)\,
\end{equation}
for ordinary semistandard  tableaux as well,
where again $S = \sigma ^{-1}(T) = T\circ \sigma $.
\end{remark}
\begin{remark}\label{rem:triples}
Given a tuple of rows $\beta _{i}/\alpha _{i}$, if we shift the $i$-th
row to the right by $\epsilon \sigma (i)$ for a small $\epsilon >0$,
then $h_{\sigma }(\beta /\alpha )$ is equal to the number of
alignments between a box boundary in row $j$ and the interior of a box
in row $i$ for $i<j$, where a boundary is the edge common to two
adjacent boxes which are either in or adjacent to the row.  An empty
row has one boundary.
\end{remark}

The relation between $\Gcal$ and $\Lcal$ can now be made precise by
applying Proposition \ref{prop:e-mu-coef} and
Lemma~\ref{l:triples2dinv} to the expression for $q^{h_{\sigma }(\beta
/\alpha )} \Gcal _{\sigma (\beta /\alpha )}(X;q^{-1})$ given in
Corollary \ref{cor:omega-super}.

\begin{cor}\label{cor:G-versus-L}
Given $\alpha ,\beta \in \ZZ ^{l}$ and $\sigma \in S_{l}$,
\begin{equation}\label{e:G-versus-L}
\Lcal ^{\sigma }_{\beta /\alpha }(x;q)_{\pol } =
\begin{cases}
q^{h_{\sigma }(\beta/\alpha )} \Gcal _{\sigma (\beta /\alpha )}(x;q^{-1}) &
\text{if $\alpha _{i}\leq \beta _{i}$ for all $i$}\\
0 & \text{otherwise},
\end{cases}
\end{equation}
where $h_{\sigma }(\beta /\alpha )$ is the number of $\sigma $-triples
in $\beta /\alpha $, and the right hand side is evaluated in $l$
variables $x_{1},\ldots, x_{l}$.
\end{cor}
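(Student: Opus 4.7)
\medskip
\noindent\textbf{Proof plan.} The plan is to verify the identity after applying $\omega$, which reduces the claim to an equality of tableau generating functions in $l$ variables. In the case where $\alpha_i > \beta_i$ for some $i$, the vanishing assertion $\Lcal^{\sigma}_{\beta/\alpha}(x;q)_{\pol} = 0$ is already part of the conclusion of Proposition~\ref{prop:e-mu-coef}, so I only need to treat the case $\alpha_i \leq \beta_i$ for all $i$.

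In this case, I would first apply Proposition~\ref{prop:e-mu-coef} to rewrite the left hand side as
\begin{equation*}
\omega\,\Lcal^{\sigma}_{\beta/\alpha}(x_1,\dots,x_l;q)_{\pol} = N^{\sigma}_{\beta/\alpha}(x_1,\dots,x_l;q) = \sum_{S\in\SSYT_{-}(\beta/\alpha)} q^{h_{\sigma}(S)}\, x^{S}.
\end{equation*}
Next, I would apply Corollary~\ref{cor:omega-super} to $\nu = \sigma(\beta/\alpha)$ with $q$ replaced by $q^{-1}$, obtaining
\begin{equation*}
q^{h_{\sigma}(\beta/\alpha)}\,\omega\,\Gcal_{\sigma(\beta/\alpha)}(x;q^{-1}) = \sum_{T\in\SSYT_{-}(\sigma(\beta/\alpha))} q^{\,h_{\sigma}(\beta/\alpha) - \inv(T)}\, x^{T}.
\end{equation*}

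The key step is then to use the correspondence $T \leftrightarrow S = T\circ\sigma$, which is a weight-preserving bijection between $\SSYT_{-}(\sigma(\beta/\alpha))$ and $\SSYT_{-}(\beta/\alpha)$: the permutation $\sigma$ only relabels which row a given box belongs to, so the underlying multiset of entries (and hence the monomial $x^{T} = x^{S}$) is unchanged. Lemma~\ref{l:triples2dinv} provides the crucial exponent identity $h_{\sigma}(\beta/\alpha) - \inv(T) = h_{\sigma}(S)$, matching the two sums term-by-term. Combining the three displayed equations gives $\omega\,\Lcal^{\sigma}_{\beta/\alpha}(x;q)_{\pol} = q^{h_{\sigma}(\beta/\alpha)}\,\omega\,\Gcal_{\sigma(\beta/\alpha)}(x;q^{-1})$ as symmetric polynomials in $x_1,\dots,x_l$, and applying $\omega^{-1}=\omega$ yields the corollary.

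There is essentially no hard step: the whole substance of the corollary has been absorbed into Proposition~\ref{prop:e-mu-coef} (which gave a tableau model for $\omega\Lcal^{\sigma}_{\beta/\alpha}$ with the twisted triple statistic) and Lemma~\ref{l:triples2dinv} (which matched the triple statistic with attacking inversions after twisting by $\sigma$). The only thing to be careful about is that both sides of the claimed identity are symmetric polynomials in $l$ variables---on the right this follows from Lemma~\ref{lem:finite-variables} applied to $\sigma(\beta/\alpha)$, which has $l$ rows, so that no information is lost in evaluating in $l$ variables---making the comparison of monomial expansions in $l$ variables sufficient.
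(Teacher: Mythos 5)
Your proof is correct and follows essentially the same route the paper takes: the vanishing case is read off from Proposition~\ref{prop:e-mu-coef}, and in the case $\alpha_i\leq\beta_i$ you combine the tableau model for $\omega\,\Lcal^\sigma_{\beta/\alpha}$ from Proposition~\ref{prop:e-mu-coef} with Corollary~\ref{cor:omega-super} applied to $\sigma(\beta/\alpha)$, matching the generating functions term-by-term via the weight-preserving bijection $T\mapsto S=T\circ\sigma$ and the exponent identity $h_\sigma(\beta/\alpha)-\inv(T)=h_\sigma(S)$ of Lemma~\ref{l:triples2dinv}. Your remark on the $\ell(\lambda)\leq l$ support is the right observation that makes the $\omega$-transfer between $l$-variable evaluations legitimate.
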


\section{The generalized shuffle theorem}
\label{s:main}

\subsection{Cauchy identity}
\label{ss:Cauchy}
In this section we derive our main results, Theorems \ref{thm:main-L}
and \ref{thm:main-G}.  The key point is the following delightful
`Cauchy identity' for non-symmetric Hall-Littlewood polynomials.

\begin{thm}\label{thm:Cauchy}
For any permutation $\sigma \in S_{l}$, the twisted non-symmetric
Hall-Littlewood polynomials $E^{\sigma }_{\lambda }(x;q)$ and
$F^{\sigma }_{\lambda }(x;q)$ in (\ref{e:E-twist}, \ref{e:F-twist})
satisfy the identity
\begin{equation}\label{e:Cauchy}
\frac{\prod _{i<j} (1 - q\, t\, x_{i} \, y_{j})}{\prod _{i\leq j} (1 -
t\, x_{i}\, y_{j})} = \sum _{\aA \geq 0} t^{|\aA |}\, E^{\sigma }_{\aA
}(x_{1},\ldots,x_{l};q^{-1}) \, F^{\sigma }_{\aA
}(y_{1},\ldots,y_{l};q),
\end{equation}
where the sum is over $\aA = (a_{1},\ldots,a_{l})$ with all $a_{i}\geq
0$ and $|\aA | \defeq a_{1}+\cdots +a_{l}$.
\end{thm}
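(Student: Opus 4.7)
My plan is to prove the Cauchy identity by exploiting the asymmetry between the two sides: the left-hand side $C(x,y) := \prod_{i<j}(1-qt\,x_iy_j) / \prod_{i \leq j}(1-t\,x_iy_j)$ is manifestly independent of $\sigma$, while the right-hand side $R^\sigma(x,y)$ depends on $\sigma$ only through the bases $\{E^\sigma_{\aA}\}$ and $\{F^\sigma_{\aA}\}$. The strategy is therefore (a) to show that $R^\sigma(x,y)$ is also independent of $\sigma$ and (b) to verify the identity $C(x,y)=R^\sigma(x,y)$ for one convenient choice. The main tools will be the orthogonality of $E^\sigma_\lambda$ against $\overline{F^\sigma_\lambda}$ (Proposition \ref{prop:twist-orthogonality}), the Hecke recurrence \eqref{e:E-recurrence} that relates $E^\sigma$ for neighboring $\sigma$, and the self-adjointness of $T_i$ under $\langle -,-\rangle_q$ (Lemma \ref{lem:Ti-self-adjoint}).

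For (a), I apply \eqref{e:E-recurrence} to rewrite $E^\sigma_\aA(x;q^{-1})$ in terms of $E^{s_i\sigma}_{s_i\aA}(x;q^{-1})$ acted on by $T_i^{(x)}$ (with parameter $q^{-1}$), and derive a parallel recurrence for $F^\sigma_\aA(y;q)$ from the definition $F^\sigma_\lambda = \overline{E^{\sigma w_0}_{-\lambda}}$, using the length reversal $\ell(\tau w_0) = \ell(w_0) - \ell(\tau)$ so that $s_i\sigma > \sigma$ translates to $s_i(\sigma w_0) < \sigma w_0$ and the recurrence on the $\sigma w_0$ side runs the other way. Substituting both recurrences into $R^{s_i\sigma}(x,y)$ and re-indexing the outer sum by $\aA \mapsto s_i\aA$, the powers $q^{\pm I(a_i \leq a_{i+1})}$ produced by the two recurrences cancel. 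What remains is the operator identity asserting that the composition of $T_i^{(x)}$ (parameter $q^{-1}$) and the conjugate of $T_i^{(y)}$ (parameter $q$) obtained via the bar involution fixes the Cauchy kernel $C(x,y)$; this can be checked by a direct intertwining calculation, or dually by appealing to the self-adjointness of $T_i$ under $\langle -,-\rangle_q$.

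For (b), I take $\sigma = 1$ (so $E^1_\aA = E_\aA$ and $F^1_\aA = \overline{E^{w_0}_{-\aA}}$) and proceed by induction on $l$. The base case $l = 1$ is the geometric series $(1-tx_1y_1)^{-1} = \sum_{a\geq 0} t^a x_1^a y_1^a$. For the inductive step I use the factorization
\begin{equation*}
C(x_1,\ldots,x_l;y_1,\ldots,y_l) = \frac{1}{1-tx_1y_1}\prod_{j>1}\frac{1-qt\,x_1y_j}{1-tx_1y_j}\cdot C(x_2,\ldots,x_l;y_2,\ldots,y_l),
\end{equation*}
combined with Lemma \ref{lem:factorization}, which decomposes $E^{\sigma^{-1}}_\aA$ whenever $\aA$ has a dominance break between consecutive indices. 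Alternatively, by orthogonality, the identity $C(x,y)=R^1(x,y)$ reduces to showing $\langle C(x,y),\overline{F^1_\mu(x;q^{-1})}\rangle_{q^{-1},x} = t^{|\mu|} F^1_\mu(y;q)$ for each $\mu\geq 0$, which can be extracted coefficient-wise from the constant term formula \eqref{e:HL-inner}.

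The hard part is the bookkeeping in (a): the operators $T_i^{(x)}$ and $T_i^{(y)}$ live on different tensor factors and carry different specializations of the Hecke parameter (namely $q^{-1}$ and $q$), and identifying the precise intertwining identity of the form $T_i^{(x)} C(x,y) = (T_i^{(y)})^{\!*}\, C(x,y)$ requires careful tracking of how the bar involution in \eqref{e:F-twist} interacts with the recurrence. The asymmetry between the numerator $\prod_{i<j}(1-qt\,x_iy_j)$ and the denominator $\prod_{i\leq j}(1-tx_iy_j)$ is dictated precisely by what this intertwining demands, which is why the kernel on the left-hand side takes the specific form stated.
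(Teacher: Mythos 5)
Your strategy diverges from the paper's in a way that introduces both inefficiency and a genuine gap. The paper never needs your step (a). By Proposition \ref{prop:twist-orthogonality}, the claim that the coefficient of $F^\sigma_{\aA}(y;q)$ in the kernel is $t^{|\aA|}E^\sigma_{\aA}(x;q^{-1})$ reduces, for \emph{every} $\sigma$ simultaneously, to the single $\sigma$-free reproducing identity
\[
f(tx) \;=\; \langle y^0\rangle\; f(y)\,\frac{\prod_{i<j}(1-q^{-1}t\,x_i/y_j)}{\prod_{i\le j}(1-t\,x_i/y_j)}\,\prod_{i<j}\frac{1-y_i/y_j}{1-q^{-1}y_i/y_j},
\]
valid for an \emph{arbitrary} polynomial $f(y)\in\QQ(q)[y]$. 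Once one observes that this identity is not tied to the particular basis $\{E^\sigma_{\aA}\}$ at all, its proof is a one-line induction on $l$: the unique factor with negative powers of $y_1$ is $1/(1-t\,x_1/y_1)$, so taking the constant term in $y_1$ amounts to the substitution $y_1\mapsto t x_1$, the extra $i=1$ factors cancel, and the identity collapses to the same statement in $y_2,\ldots,y_l$. Applying it to $f=E^\sigma_{\aA}(y;q)$ and using homogeneity then gives \eqref{e:Cauchy} for every $\sigma$. Your ``alternative'' at the end of (b) is closest to this, but you tie the computation to $\sigma=1$ and to the specific form of $F^1_\mu$, which is precisely what the paper's argument avoids; the $\sigma$-uniformity you would have obtained for free makes step (a) entirely redundant.

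Beyond the structural inefficiency, there are concrete gaps. In (a), the asserted intertwining identity $T_i^{(x)}C(x,y)=(T_i^{(y)})^{*}C(x,y)$ is the whole substance of the step, and you leave it unverified with the remark that it ``can be checked by a direct intertwining calculation''; without that calculation (a) is not a proof. In the first variant of (b), the appeal to Lemma \ref{lem:factorization} fails: that lemma factors $E^{\sigma^{-1}}_{\lambda}$ only when there is an index $k$ with $\lambda_i\ge\lambda_j$ for all $i\le k<j$, and for a generic $\aA\ge 0$ in the sum no such separation exists, so the factorization of the kernel you write down cannot be matched term-by-term against a factorization of $E^1_{\aA}(x;q^{-1})F^1_{\aA}(y;q)$. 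The paper sidesteps both problems by moving the induction to the level of the basis-free reproducing identity, where the residue extraction in $y_1$ is clean.
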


\begin{proof}
Let $Z(x,y,q,t)$ denote the product on the left hand side.

From the definitions we see that $E^{\sigma }_{\aA }(x;q)$ and
$F^{\sigma }_{\aA }(x;q)$ for $\aA \geq 0$ 
belong to the polynomial
ring $\QQ (q)[x] = \QQ (q)[x_{1},\ldots,x_{l}]$.  The $E^{\sigma
}_{\aA }(x;q)$ form a graded basis of $\QQ (q)[x]$, since they are
homogeneous and $E^{\sigma }_{\aA }(x;q)$ has leading term $x^{\aA }$.
The $F^{\sigma }_{\aA }(x;q)$ likewise form a graded basis of $\QQ
(q)[x]$.  We are to prove that the expansion of $Z(x,y,q,t)$ as a
power series in $t$, with coefficients expressed in terms of the basis
$\{E^{\sigma }_{\aA }(x;q^{-1}) F^{\sigma }_{\bb }(y;q) \}$ of $\QQ
(q)[x,y]$, is given by the formula on the right hand side.  Put
another way, we are to show that the coefficient of $F^{\sigma }_{\aA
}(y;q)$ in $Z(x,y,q,t)$ is equal to $t^{|\aA |}E^{\sigma }_{\aA
}(x;q^{-1})$, or equivalently that the coefficient of $F^{\sigma
}_{\aA }(y^{-1};q^{-1})$ in $Z(x,y^{-1},q^{-1},t)$ is equal to
$t^{|\aA |}E^{\sigma }_{\aA }(x;q)$.

Using Proposition \ref{prop:twist-orthogonality}, this will follow by taking
$f(y) = E^{\sigma }_{\aA }(y;q)$ in the identity
\begin{equation}\label{e:reproducing}
f(t x) = \langle y^{0} \rangle\; f(y) \frac{\prod _{i<j}(1 - q^{-1}
t\, x_{i}/y_{j})}{\prod _{i\leq j} (1 - t\, x_{i}/y_{j} )} \prod
_{i<j} \frac{1 - y_{i}/y_{j}}{1 - q^{-1} y_{i}/y_{j}},
\end{equation}
provided we prove that this identity is valid for all polynomials
$f(y) = f(y_{1},\ldots,y_{l})$.  Here we mean that $f(y)\in \QQ
(q)[y]$ is a true polynomial and not a Laurent polynomial.  Note that
the denominator factors in \eqref{e:reproducing} should be understood
as geometric series.

The only factor in \eqref{e:reproducing} that involves negative powers
of $y_{1}$ is $1/(1-t\, x_{1}/y_{1})$.  All the rest is a power series
as a function of $y_{1}$.  For any power series $g(y_{1})$, we have
$\langle y_{1}^{0} \rangle\, g(y_{1})/(1-t\, x_{1}/y_{1}) = g(t
x_{1})$.  The factors other than $1/(1- t\, x_{1}/y_{1})$ with index
$i=1$ in \eqref{e:reproducing} cancel upon setting $y_{1} = tx_{1}$.
It follows that when we take the constant term in the variable
$y_{1}$, \eqref{e:reproducing} reduces to the same identity in
variables $y_{2},\ldots,y_{l}$.  We can assume that the latter holds
by induction.
\end{proof}
\begin{example}
The products $t^{|\aA|}E^{\sigma}_{\aA}(x;q^{-1}) F^{\sigma }_{\aA}(y;q)$ 
for the pairs in Figure \ref{fig:ns Hall-Littlewood}
sum to the $t^{0}$ through $t^{2}$ terms in the expansion of
\begin{equation}
\frac{(1 - q\, t\, x_{1} \, y_{2})(1 - q\, t\, x_{1} \,
y_{3})(1 - q\, t\, x_{2} \, y_{3})} {(1 - t\, x_{1}\, y_{1})(1 - t\,
x_{1}\, y_{2})(1 - t\, x_{1}\, y_{3})(1 - t\, x_{2}\, y_{2})(1 - t\,
x_{2}\, y_{3})(1 - t\, x_{3}\, y_{3})}.
\end{equation}
\end{example}

\begin{remark}\label{rem:Cauchy-at-q=0}
Using the fact that our non-symmetric Hall-Littlewood polynomials
agree with those of Ion in~\cite{Ion08}, the $\sigma=1$ case
of~\eqref{e:Cauchy} can be derived from the Cauchy identity for
non-symmetric Macdonald polynomials of Mimachi and
Noumi~\cite{MiNou98}.
We also note that~\eqref{e:Cauchy} for $\sigma =1$
specializes at $q = 0$ to the $\GL _{l}$ case of the non-symmetric
Cauchy identities of Fu and Lascoux~\cite{FuLas90}.
\end{remark}

\subsection{Winding permutations}
\label{ss:winding}

We will apply Theorem \ref{thm:Cauchy} in cases for which the
twisting permutation has a special form, allowing the Hall-Littlewood
polynomial $F^{\sigma }_{\aA }(y;q)$ to be written another way.

\begin{defn}\label{def:winding}
Let $\fp{x} = x - \lfloor x \rfloor$ 
denote the fractional part of a real
number $x$.  Let $c_{1},\ldots,c_{l}$ be the sequence of fractional
parts $c_{i} = \fp{a\, i+b}$ of an arithmetic progression, where $a$ is
assumed irrational, so the $c_{i}$ are distinct.  Let $\sigma \in
S_{l}$ be the permutation such that $\sigma (c_{1},\ldots,c_{l})$ is
increasing, i.e., $\sigma (1),\ldots,\sigma (l)$ are in the same
relative order as $c_{1},\ldots,c_{l}$.

A permutation $\sigma $ of this form is a {\em winding permutation}.
The {\em descent indicator} of $\sigma $ is the vector $(\eta
_{1},\ldots,\eta _{l-1})$ defined by
\begin{equation}\label{e:descent-indicator}
\eta _{i} = \begin{cases}
1& \text{if $\sigma (i)>\sigma (i+1)$},\\
0& \text{if $\sigma (i)<\sigma (i+1)$}.
\end{cases}
\end{equation}
The {\em head} and {\em tail} of the winding permutation $\sigma $ are
the respective permutations $\tau ,\, \theta \in S_{l-1}$ such that
$\tau (1),\ldots,\tau (l-1)$ are in the same relative order as $\sigma
(1),\ldots,\sigma (l-1)$, and $\theta (1),\ldots,\theta (l-1)$ are in
the same relative order as $\sigma (2),\ldots,\sigma (l)$.
\end{defn}

Adding an integer to $a$ in the above definition doesn't change the
$c_{i}$, so we can assume that $0<a<1$.  In that case the descent
indicator of $\sigma $ is characterized by
\begin{equation}\label{e:winding-descents}
\begin{aligned}
\eta _{i} = 1\quad \Leftrightarrow \quad c_{i}>c_{i+1}\quad
\Leftrightarrow \quad & c_{i+1} = c_{i}+a-1,\\
\eta _{i} = 0\quad \Leftrightarrow \quad c_{i}<c_{i+1}\quad
\Leftrightarrow\quad & c_{i+1} = c_{i}+a.\\
\end{aligned}
\end{equation}

\begin{prop}\label{prop:winding}
Let $\sigma \in S_{l}$ be a winding permutation, with descent
indicator $\eta $, and head and tail $\tau ,\, \theta \in S_{l-1}$.
For every $\lambda \in \ZZ ^{l-1}$ we have identities 
\begin{gather}\label{e:winding-1}
E^{\theta ^{-1}}_{\lambda }(x;q) = x^{\eta }\, E^{\tau ^{-1}}_{\lambda
-\eta }(x;q),\\
\label{e:winding-2}
F^{\theta ^{-1}}_{\lambda }(x;q) = x^{\eta }\, F^{\tau ^{-1}}_{\lambda
-\eta }(x;q)
\end{gather}
of Laurent polynomials in $x_{1},\ldots,x_{l-1}$.
\end{prop}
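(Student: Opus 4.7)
The identity \eqref{e:winding-2} reduces to \eqref{e:winding-1} via the definition $F^\sigma_\lambda(x;q) = \overline{E^{\sigma w_0}_{-\lambda}(x;q)}$ from \eqref{e:F-twist}: one checks that $\sigma w_0$ is itself a winding permutation, arising from the sequence $\fp{(1-a)i + b'}$ for a suitable $b'$, and that its head, tail, and descent indicator are obtained from those of $\sigma$ by reversal (and complementation in the case of $\eta$). Substituting these relations into \eqref{e:winding-1} applied to $\sigma w_0$ at weight $-\lambda$ and applying the bar involution converts it into \eqref{e:winding-2}. So I focus on proving \eqref{e:winding-1}.

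For \eqref{e:winding-1}, I would proceed by induction on $|\Inv(-\lambda)|$, using the recurrence \eqref{e:E-recurrence}. In the base case $\lambda$ is dominant and $E^{\theta^{-1}}_\lambda(x;q) = x^\lambda$ by the initial condition below \eqref{e:E-recurrence}; the task reduces to showing $E^{\tau^{-1}}_{\lambda-\eta}(x;q) = x^{\lambda-\eta}$. Although $\lambda-\eta$ may fail dominance at indices $i$ with $\lambda_i = \lambda_{i+1}$ and $\eta_i > \eta_{i+1}$, the winding structure of $\sigma$ (specifically the relationship between $\tau$, $\theta$, and $\eta$ forced by the equation $c_{i+1} = c_i + a \bmod 1$) forces each such $i$ to be a non-descent of $\tau^{-1}$, so iterating \eqref{e:E-recurrence} produces canceling $q$-prefactors and collapses $E^{\tau^{-1}}_{\lambda-\eta}$ back to the monomial $x^{\lambda-\eta}$. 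Verifying this structural compatibility is the main preliminary work; it follows from a careful unpacking of Definition~\ref{def:winding}.

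For the inductive step, choose $i$ with $\lambda_i < \lambda_{i+1}$ (so $|\Inv(-s_i\lambda)| = |\Inv(-\lambda)|-1$) and apply \eqref{e:E-recurrence} to both sides: the left hand side becomes $q^{\epsilon}T_i^{\pm 1}E^{s_i\theta^{-1}}_{s_i\lambda}$, and the right hand side becomes $x^\eta \cdot q^{\epsilon'}T_i^{\pm 1}E^{s_i\tau^{-1}}_{s_i(\lambda-\eta)}$. The main obstacle is that $T_i$ does not commute with multiplication by $x^\eta$: using \eqref{e:Demazure-Lusztig}, $T_i \cdot (x^\eta)^{\bullet} = (x^{s_i\eta})^{\bullet} \cdot T_i + (\text{correction})$, where the correction is a rational function in $x_i, x_{i+1}$ that must be absorbed into the $q$-prefactor on the right via a short calculation. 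Together with the combinatorial fact that $s_i$ transforms the head, tail, and descent indicator of $\sigma$ in a compatible way (either giving another winding permutation with descent indicator $s_i\eta$, or reducing to an explicit related configuration), this identifies the transformed right hand side with the transformed left hand side via the inductive hypothesis applied to $s_i\lambda$.
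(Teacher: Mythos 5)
Your base case argument is essentially correct, but the inductive step for \eqref{e:winding-1} has a gap. Applying the recurrence \eqref{e:E-recurrence} to both sides changes the twists from $\theta^{-1},\tau^{-1}$ to $s_i\theta^{-1}=(\theta s_i)^{-1}$ and $s_i\tau^{-1}=(\tau s_i)^{-1}$, so the induction only closes if $(\tau s_i,\theta s_i,s_i\eta)$ are the head, tail, and descent indicator of another winding permutation. You assert this, but it fails in general: take $l=4$, $\sigma=(2,3,4,1)$ (winding, with $a\approx 0.3$), so $\tau=\mathrm{id}$, $\theta=(2,3,1)$, $\eta=(0,0,1)$. With $i=1$ one gets $\tau s_1=(2,1,3)$ and $\theta s_1=(3,2,1)$, but no permutation $\sigma'\in S_4$ has head $(2,1,3)$ and tail $(3,2,1)$, since the head forces $\sigma'(2)<\sigma'(1)<\sigma'(3)$ while the tail forces $\sigma'(3)<\sigma'(2)$. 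The fallback to an ``explicit related configuration'' is not specified, so the inductive hypothesis is not available. Relatedly, the claim that the $T_i$-vs-$x^\eta$ correction is ``absorbed into the $q$-prefactor'' is too optimistic: when $\eta_i=1$, $\eta_{i+1}=0$ one finds $T_i(x^\eta)^{\bullet} = (x^{s_i\eta})^{\bullet}T_i + (q-1)(x^\eta)^{\bullet}$, and the extra operator term is not a $q$-power factor; a clean $q$-power only emerges if the $T_i^{\pm1}$ exponents on the two sides are coordinated so that the affine Hecke identities \eqref{e:TxT-general} apply, and that coordination is precisely what the winding structure must supply and what your sketch does not establish.

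The paper's proof avoids these difficulties with a differently organized induction. Lemma \ref{lem:winding} proves the operator identity $T_{\tau w}^{-1}T_\tau\, x^{-\eta}\, T_\theta^{-1}T_{\theta w}=q^e x^{-w^{-1}(\eta)}$ by induction on $\ell(w)$, keeping the winding data $(\tau,\theta,\eta)$ fixed throughout; the winding structure enters only in the case analysis on $(\eta_{v(i)},\eta_{v(i+1)})$ (Cases 1--3), which shows that the two $T_i$-exponents $\varepsilon_1,\varepsilon_2$ always land on one of the clean patterns in \eqref{e:TxT-general}. Then \eqref{e:winding-1} follows by applying the operator identity to $x^{w^{-1}(\lambda)}$ for a single $w$ chosen to antidominate \emph{both} $\lambda$ and $\lambda-\eta$ simultaneously, and fixing the residual $q$-power by comparing $x^\lambda$ coefficients via Corollary \ref{cor:general-monic}. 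Your recurrence-based approach effectively tries to unroll this lemma one $s_i$ at a time for varying $\lambda$, but without a formulation that stabilizes the winding data, the induction does not go through.

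A smaller point: in reducing \eqref{e:winding-2} to \eqref{e:winding-1} you invoke $\sigma w_0$, but the twist arising from \eqref{e:F-twist} is $\theta^{-1}w_0'=(w_0'\theta)^{-1}$, and $w_0'\theta$ is the tail of $w_0\sigma$, not of $\sigma w_0$ (whose tail is $\tau w_0'$). The correct transformation is $\sigma\mapsto w_0\sigma$, which is indeed a winding permutation with head $w_0'\tau$, tail $w_0'\theta$, and descent indicator $1-\eta$.
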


The proof uses the following lemma.

\begin{lemma}\label{lem:winding}
With $\tau ,\theta $ and $\eta $ as in Proposition \ref{prop:winding},
and for every $w\in S_{l-1}$, there is an identity of operators on
$\kk[x_{1}^{\pm 1},\ldots,x_{l-1}^{\pm 1}]$
\begin{equation}\label{e:Hecke-winding}
T_{\tau w}^{-1}\, T_{\tau }\, x^{-\eta }\, T_{\theta }^{-1}\,
T_{\theta w} = q^{e}\, x^{-w^{-1}(\eta )},
\end{equation}
for some exponent $e$ depending on $w$.
\end{lemma}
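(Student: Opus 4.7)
The plan is to induct on $\ell(w)$. The base case $w=1$ is immediate: each Hecke-algebra factor telescopes to the identity, so the LHS of \eqref{e:Hecke-winding} reduces to $x^{-\eta}$ with $e=0$.

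For the inductive step, suppose the result holds for $w$ and consider $ws_k$ with $\ell(ws_k)=\ell(w)+1$. Writing $L_v$ for the LHS of \eqref{e:Hecke-winding} with $v$ in place of $w$, I decompose
\begin{equation*}
L_{ws_k} \;=\; \bigl(T_{\tau w s_k}^{-1} T_{\tau w}\bigr) \, L_w \, \bigl(T_{\theta w}^{-1} T_{\theta w s_k}\bigr).
\end{equation*}
Each outer factor equals $T_k^{\pm 1}$, with signs determined by whether $(\tau w)(k)\lessgtr(\tau w)(k+1)$ and whether $(\theta w)(k)\lessgtr(\theta w)(k+1)$, respectively.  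Setting $\nu=w^{-1}\eta\in\{0,1\}^{l-1}$, the inductive hypothesis gives $L_w=q^e x^{-\nu}$, so the task reduces to verifying a scalar identity of the form $T_k^{\pm 1}\, x^{-\nu}\, T_k^{\mp 1} = q^c x^{-s_k\nu}$.

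The Bernstein--Lusztig commutation $T_k\, x^\mu = x^{s_k\mu}\, T_k + (q-1)(x^\mu-x^{s_k\mu})/(1-x_{k+1}/x_k)$, specialized to $\mu\in\{0,-1\}^{l-1}$, yields after a short calculation the key identity $T_k\, x_k^{-1}\, T_k = q\, x_{k+1}^{-1}$ and its inverse form $T_k^{-1}\, x_{k+1}^{-1}\, T_k^{-1} = q^{-1} x_k^{-1}$, together with the obvious commutation $T_k\, x^\mu = x^\mu\, T_k$ when $\mu_k=\mu_{k+1}$. These imply the desired scalar identity holds in exactly four ``matching'' configurations of signs and $(\nu_k,\nu_{k+1})$: sign patterns $(+,+)$ or $(-,-)$ with $\nu_k=\nu_{k+1}$ (giving $c=0$); pattern $(+,-)$ with $(\nu_k,\nu_{k+1})=(0,1)$ (giving $c=-1$); pattern $(-,+)$ with $(\nu_k,\nu_{k+1})=(1,0)$ (giving $c=1$).

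The main obstacle, and the only place the winding hypothesis enters, is showing that only these matching configurations arise. Setting $i' = \tau^{-1}((\tau w)(k))$ and $j' = \tau^{-1}((\tau w)(k+1))$, a direct unwinding of definitions gives $\nu_k=\eta_{i'}$, $\nu_{k+1}=\eta_{j'}$, while the first sign equals $\operatorname{sgn}(\sigma(i')-\sigma(j'))$ and the second sign equals $\operatorname{sgn}(\sigma(i'+1)-\sigma(j'+1))$. Compatibility then reduces to the following combinatorial claim: for all distinct $i',j'\in\{1,\ldots,l-1\}$, the pairs $(\sigma(i'),\sigma(j'))$ and $(\sigma(i'+1),\sigma(j'+1))$ have the same relative order if and only if $\eta_{i'}=\eta_{j'}$, and in the opposite case $(\eta_{i'},\eta_{j'})$ equals $(0,1)$ or $(1,0)$ according to whether $\sigma(i')<\sigma(j')$ or $\sigma(i')>\sigma(j')$. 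This follows directly from the circle picture of Definition \ref{def:winding}: the points $c_{i'},c_{j'}$ on $\RR/\ZZ$ shift by $a$ to $c_{i'+1},c_{j'+1}$, so flipping their linear order requires exactly one of the two transitions to wrap past $0$, which is precisely recorded as a descent ($\eta=1$) of $\sigma$ at that index.
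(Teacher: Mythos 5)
Your proof takes essentially the same route as the paper's: induct on $\ell(w)$, factor the left side as $T_k^{\pm 1}\cdot L_w\cdot T_k^{\pm 1}$, reduce to the scalar Bernstein--Lusztig relations, and use the winding structure to show the exponents on $T_k$ are compatible with the value of $\nu_{k+1}-\nu_k$. Your ``circle picture'' for the compatibility claim is a nice geometric rephrasing of the paper's direct computation with $c_{i+1}=c_i+a$ vs.\ $c_i+a-1$, but it is the same idea.

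There is, however, a sign mix-up in the middle paragraph that makes the write-up internally inconsistent. You correctly identify the two ``signs'' with the descent indicators $d_1=\operatorname{sgn}(\sigma(i')-\sigma(j'))$ and $d_2=\operatorname{sgn}(\sigma(i'+1)-\sigma(j'+1))$. But note that the second outer factor $T_{\theta w}^{-1}T_{\theta w s_k}$ equals $T_k^{+1}$ precisely when $\theta w$ does \emph{not} have a descent at $k$, so its exponent is $-d_2$, not $d_2$ (whereas the first outer factor's exponent is $d_1$). Tracing this through: for $(\nu_k,\nu_{k+1})=(0,1)$ the required scalar relation $T_k^{-1}x_{k+1}^{-1}T_k^{-1}=q^{-1}x_k^{-1}$ forces $(d_1,d_2)=(-,+)$, not the $(+,-)$ you list; and for $(\nu_k,\nu_{k+1})=(1,0)$ it forces $(d_1,d_2)=(+,-)$, not $(-,+)$. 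After swapping these two lines, the corrected requirements agree exactly with the (correct) combinatorial claim in your final paragraph; as written, the middle and final paragraphs contradict each other. So the gap is a fixable transcription error rather than a wrong idea, but you should rework the sign bookkeeping carefully --- the asymmetry between $T_{\tau w s_k}^{-1}T_{\tau w}$ and $T_{\theta w}^{-1}T_{\theta w s_k}$ is exactly where the paper's explicit $\varepsilon_1,\varepsilon_2$ conventions in \eqref{e:epsilons} earn their keep.
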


\begin{proof}
We prove \eqref{e:Hecke-winding} by induction on the length of $w$.
The base case $w=1$ is trivial.  Suppose now that $w = v s_{i}$ is a
reduced factorization.  We can write the left hand side of
\eqref{e:Hecke-winding} as
\begin{equation}\label{e:winding-induction}
T_{i}^{\varepsilon _{1}}\, T_{\tau v}^{-1}\, T_{\tau }\, x^{-\eta }\,
T_{\theta }^{-1}\, T_{\theta v}\, T_{i}^{\varepsilon _{2}},
\end{equation}
where
\begin{equation}\label{e:epsilons}
\varepsilon _{1} = \begin{cases}
+1 &	 \text{if $\tau vs_{i} < \tau  v$}\\
-1&	 \text{if $\tau vs_{i} > \tau  v$}
\end{cases}\qquad
\varepsilon _{2} = \begin{cases}
+1 &	 \text{if $\theta vs_{i} > \theta v$}\\
-1&	 \text{if $\theta vs_{i} < \theta v$}
\end{cases}.
\end{equation}
Assuming by induction that \eqref{e:Hecke-winding} holds for $v$, and
substituting this into \eqref{e:winding-induction}, we are left to
show that
\begin{equation}\label{e:TxT-special}
T_{i}^{\varepsilon _{1}}\, x^{-v^{-1}(\eta )}\, T_{i}^{\varepsilon _{2}} =
q^{e}\, x^{-s_{i} v^{-1}(\eta )}
\end{equation}
for some exponent $e$.  We now consider the possible values for
$\langle \alpha _{i}^{\vee }, -v^{-1}(\eta ) \rangle$, which is equal
to $\eta _{k}-\eta _{j}$, where $v(i) = j$, $v(i+1) = k$.

Case 1: If $\eta _{j} = \eta _{k}$, we see from \eqref{e:winding-descents}
that $c_{j+1}-c_{j} = c_{k+1}-c_{k}$, hence $c_{j+1} <
c_{k+1}\Leftrightarrow c_{j} < c_{k}$.  This implies that $\sigma
(j+1)<\sigma(k+1) \Leftrightarrow \sigma (j)<\sigma (k)$, and therefore that
$\tau v(i) < \tau v(i+1) \Leftrightarrow \theta v(i) < \theta v(i+1)$.
Hence, in this case we have $\varepsilon_{1} = -\varepsilon _{2}$.

Case 2: If $\eta _{j} = 1$ and $\eta _{k} = 0$, then from
\eqref{e:winding-descents} we get $c_{j+1} = c_{j}+a-1$ and $c_{k+1} =
c_{k}+a$.  Then $c_{k+1}-c_{j+1} = c_{k}-c_{j}+1$.  Since
$c_{k+1}-c_{j+1}$ and $c_{k}-c_{j}$ both have absolute value less than
$1$, this implies $c_{k} < c_{j}$ and $c_{k+1} > c_{j+1}$.  It follows
in the same way as in Case 1 that $\tau v(i) > \tau v(i+1)$ and
$\theta v(i) < \theta v(i+1)$.  Hence, in this case we have $\varepsilon
_{1} = \varepsilon _{2} = 1$.

Case 3: If $\eta _{j} = 0$ and $\eta _{k} = 1$ we reason as in Case 2,
but with $j$ and $k$ exchanged, to conclude that in this case we have
$\varepsilon _{1} = \varepsilon _{2} = -1$.

In each case, \eqref{e:TxT-special} now follows from the well-known
affine Hecke algebra identities
\begin{equation}\label{e:TxT-general}
\begin{aligned}
T_{i}^{-1}\, x^{\mu }\, T_{i}^{} = T_{i}\, x^{\mu }\, T_{i}^{-1} =
x^{\mu } = x^{s_{i}\mu }
& \qquad 	\text{if $ \langle \alpha _{i}^{\vee }, \mu  \rangle = 0$},\\
T_{i}\, x^{\mu }\, T_{i} = q\, x^{s_{i}\mu }
& \qquad 	\text{if $ \langle \alpha _{i}^{\vee }, \mu  \rangle = -1$},\\
T_{i}^{-1}\, x^{\mu }\, T_{i}^{-1} = q^{-1} x^{s_{i}\mu }
& \qquad \text{if $ \langle \alpha _{i}^{\vee }, \mu \rangle = 1$},
\end{aligned}
\end{equation}
which can be verified directly from the definition of $T_{i}$.
\end{proof}

\begin{proof}[Proof of Proposition \ref{prop:winding}] Let $w_{0}\in
S_{l}$ and $w_{0}'\in S_{l-1}$ be the longest permutations.  Then
$w_{0}'\tau $, $w_{0}'\theta $ are the head and tail of the winding
permutation $w_{0}\sigma $, and the descent indicator of $w_{0}\sigma
$ is $\eta'_{i} = 1-\eta _{i}$.  Using these facts and the definition
\eqref{e:F-twist} of $F^{\pi }_{\lambda }(x;q)$, one can check that
\eqref{e:winding-1} implies \eqref{e:winding-2}.

To prove \eqref{e:winding-1}, we begin by observing that for any given
$\lambda $ there exists $w\in S_{l}$ such that both $w^{-1}(\lambda )$
and $w^{-1}(\lambda -\eta )$ are dominant.  To see this, first choose
any $v$ such that $v(\lambda )=\lambda _{+}$ is dominant.  Since $\eta
$ is $\{0,1 \}$-valued, the weight $\mu =v(\lambda -\eta ) = \lambda
_{+}-v(\eta )$ has the property that for all $i<j$, if $\mu _{i}<\mu
_{j}$ then $(\lambda _{+})_{i} = (\lambda _{+})_{j}$.  Hence, there is
a permutation $u$ that fixes $\lambda _{+}$ and sorts $\mu $ into
weakly decreasing order, so $u(\mu ) = uv(\lambda -\eta )$ is
dominant.  Since $uv(\lambda ) = \lambda _{+}$ is also dominant,
$w^{-1} = uv$ works.

Now, Lemma \ref{lem:winding} implies
\begin{equation}\label{e:winding-expanded-2}
T_{\tau w}^{-1}\, T_{\tau }\, x^{-\eta }\, T_{\theta }^{-1}\,
T_{\theta w} (x^{w^{-1}(\lambda) }) \sim
x^{-w^{-1}(\eta )} x^{w^{-1}(\lambda )},
\end{equation}
where $\sim$ signifies that the expressions are equal up to a $q$
power factor.  Equivalently,
\begin{equation}\label{e:winding-expanded}
T_{\theta }^{-1}\, T_{\theta w}\,
x^{w^{-1}(\lambda) } \sim x^{\eta }\, T_{\tau }^{-1}\,
T_{\tau w}\, x^{w^{-1}(\lambda - \eta )}.
\end{equation}
Writing out the definitions of $E^{\theta ^{-1}}_{\lambda }$ and
$E^{\tau ^{-1}}_{\lambda - \eta }$, while ignoring $q$ power factors,
and using the fact that $\lambda _{+} = w^{-1}(\lambda )$ and
$(\lambda -\eta )_{+} = w^{-1}(\lambda -\eta )$ for this $w$,
\eqref{e:winding-expanded} implies that \eqref{e:winding-1} holds up
to a scalar factor $q^{e}$.  But we know that the $x^{\lambda}$ term
on each side has coefficient 1, so \eqref{e:winding-1} holds exactly.
\end{proof}

\subsection{Stable shuffle theorem}

We now prove an identity of formal power series with coefficients in
$\GL _{l}$ characters, that is, symmetric Laurent polynomials in
variables $x_{1},\ldots,x_{l}$.  When truncated to the polynomial
part, this identity will reduce to our shuffle theorem for paths under
a line (Theorem \ref{thm:main-G}).

\begin{thm}\label{thm:main-L}
Let $p ,s$ be real numbers with $p$ positive and irrational.  For $i =
1,\ldots,l$, let
\[
b_{i} = \lfloor s-p(i-1) \rfloor - \lfloor s-pi \rfloor.
\]
Let $c_{i} = \fp{s-p(i-1)}$, and let $\sigma \in S_{l}$ be the
permutation such that $\sigma (1),\ldots,\sigma (l)$ are in the same
relative order as $c_{l},\ldots,c_{1}$, i.e., $\sigma
(c_{l},\ldots,c_{1})$ is increasing.
For any non-negative integers $u,v$ we have the identity of formal
power series in $t$
\begin{equation}\label{e:main-L}
\Hcat _{b_{1}+u,b_{2},\ldots,b_{l-1},b_{l}-v} = \sum
_{a_{1},\ldots,a_{l-1}\geq 0} t^{|\aA |} \Lcal ^{\sigma
}_{((b_{l},\ldots,b_{1}) + (-v,a_{l-1},\ldots,a_{1})) /
(a_{l-1},\ldots,a_{1},-u)}(x;q),
\end{equation}
where $\Hcat _{\bb }$ is given by Definition
\ref{def:negut-catalanimal}.

\end{thm}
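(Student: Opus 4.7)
The strategy is to realize \eqref{e:main-L} as an avatar of the non-symmetric Cauchy identity \eqref{e:Cauchy}, with the winding permutation $\sigma$ providing the combinatorial glue between the Hall-Littlewood symmetrization on the left and the LLT series on the right. Using Proposition \ref{prop:L-formula}, I would rewrite each summand on the right of \eqref{e:main-L} as $\Hbold_q(w_0(F^{\sigma^{-1}}_{\tilde\beta}(x;q)\,\overline{E^{\sigma^{-1}}_{\tilde\alpha}(x;q)}))$, with $\tilde\beta = (b_l-v, b_{l-1}+a_{l-1},\ldots,b_1+a_1)$ and $\tilde\alpha = (a_{l-1},\ldots,a_1,-u)$, and then, by linearity, move the sum inside $\Hbold_q\circ w_0$ to reduce the right side to $\Hbold_q(w_0(\Sigma))$, where
\[
\Sigma \;=\; \sum_{\aA\geq 0} t^{|\aA|}\, F^{\sigma^{-1}}_{\tilde\beta}(x;q)\, \overline{E^{\sigma^{-1}}_{\tilde\alpha}(x;q)}.
\]

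The core of the argument is the evaluation of $\Sigma$ in closed form. The block structure of $\tilde\alpha$ (the shift parameter $-u$ occupying the last slot, with $a_{l-1},\ldots,a_1 \geq 0$ in the first $l-1$) combined with the analogous one-slot block structure of $\tilde\beta$ (the shift $-v$ occurring in the first slot, the $a_i$ and $b_i$ coupled in the remaining slots) invites factoring the Hall-Littlewood polynomials via Lemma \ref{lem:factorization}, together with the identity $F^\pi_\lambda = \overline{E^{\pi w_0}_{-\lambda}}$, to strip off the boundary monomials $x_l^{-u}$ and $x_1^{b_l-v}$. The winding structure of $\sigma$ is essential here: its descent indicator $\eta \in \{0,1\}^{l-1}$ encodes the $b_i$-sequence (immediately from $b_i = \lfloor s-p(i-1)\rfloor - \lfloor s-pi\rfloor$), and Proposition \ref{prop:winding} supplies exactly the monomial factors $x^\eta$ needed to produce the remaining $b_i$ exponents and to convert between the induced permutations $\sigma_1, \sigma_2 \in S_{l-1}$ (appearing in the factorizations) and the tail and head of $\sigma$. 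After this reduction, the residual summation takes the form of the Cauchy sum \eqref{e:Cauchy} in $l-1$ variables, twisted by (the head or tail of) $\sigma$, with alphabets $(x_1,\ldots,x_{l-1})$ and $(x_2,\ldots,x_l)$ paired by the substitution $y_i = 1/x_{i+1}$.

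Theorem \ref{thm:Cauchy} then evaluates the residual sum as an explicit rational function of the form $\prod(1-q\,t\,x_i/x_j)/\prod(1-t\,x_i/x_j)$. Assembling the monomial prefactors from the central step and applying $\Hbold_q\circ w_0$, one obtains precisely the raising-operator expression for $\Hcat_{(b_1+u,\, b_2,\,\ldots,\, b_{l-1},\, b_l-v)}(x)$ given by Definition \ref{def:negut-catalanimal}. The main technical obstacle is the combinatorial bookkeeping in the middle step: carrying out the factorization of $F^{\sigma^{-1}}_{\tilde\beta}$ (which may require handling small $v$ differently, perhaps by stabilizing in $v$), verifying that the induced permutations coincide on the nose with the tail and head of $\sigma$, and checking that the monomial factors from Proposition \ref{prop:winding} together with the stripped boundary monomials assemble into the exponent vector $(b_1+u, b_2,\ldots,b_{l-1}, b_l-v)$ in the correct order with all $q,t$ powers from the winding recursion matching those of the $\Hbold_q$ prefactor.
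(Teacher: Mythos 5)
Your proposal is correct and follows essentially the same route as the paper: rewrite each $\Lcal^\sigma_{\tilde\beta/\tilde\alpha}$ via Proposition \ref{prop:L-formula}, move the $\aA$-sum inside $\Hbold_q \circ w_0$, factor off the boundary monomials $x_l^{-u}$ and $x_1^{b_l-v}$ with Lemma \ref{lem:factorization} (plus $F^\pi_\lambda = \overline{E^{\pi w_0}_{-\lambda}}$), use Proposition \ref{prop:winding} to pass between the tail and head permutations and absorb the descent vector $\eta$ (which is exactly $(b_{l-1},\ldots,b_1) - \lfloor p\rfloor(1,\ldots,1)$), and apply Theorem \ref{thm:Cauchy} in $l-1$ variables with $y_i \mapsto 1/x_{i+1}$. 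The boundary issue you flag is handled in the paper not by stabilizing in $v$ but by the observation that when $b_l - v = b_{l-1} + a_{l-1} + 1$, the relation \eqref{e:b-c-sigma} forces $\sigma(1)<\sigma(2)$, which is precisely the condition under which Lemma \ref{lem:factorization} still applies.
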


\begin{remark}\label{rem:extended-path}
If $\delta $ is the highest south-east lattice path weakly below the
line $y+px = s$, starting at $(0,\lfloor s \rfloor)$ and extending
forever (not stopping at the $x$ axis), then $b_{i}$ is the number of
south steps in $\delta $ along the line $x = i-1$, and $c_{i}$ is the
gap along $x=i-1$ between the given line and the highest point of
$\delta $ beneath it.
\end{remark}

\begin{proof}[Proof of Theorem \ref{thm:main-L}]
We will prove that for $\bb $, $\sigma $ as in the hypothesis of the
theorem, we have the stronger `unstraightened' identity
\begin{multline}\label{e:main-L-unstraightened}
x_{1}^{u}x_{l}^{-v}x^{\bb }\frac{\prod _{i+1<j}(1 -
q\, t\, x_{i}/x_{j})}{\prod _{i<j}(1 - t\, x_{i}/x_{j})}\\
 = \sum _{a_{1},\ldots,a_{l-1}\geq 0} t^{|\aA |} w_{0} \bigl(F^{\sigma ^{-1}
}_{(b_{l},\ldots,b_{1})+(-v,a_{l-1},\ldots,a_{1})}(x;q)
\overline{E^{\sigma ^{-1}}_{(a_{l-1},\ldots,a_{1},-u)}(x;q)} \bigr).
\end{multline}
By Proposition \ref{prop:L-formula}, applying the Hall-Littlewood
raising operator $\Hbold _{q}$ to both sides of
\eqref{e:main-L-unstraightened} yields \eqref{e:main-L}.

By construction, the $b_{i}$ take only values $\lfloor p \rfloor$ or
$\lceil p \rceil$, and since $b_{i}+c_{i} - c_{i+1}= p$, we have
\begin{equation}\label{e:b-c-sigma}
b_{i} = \lfloor p \rfloor\quad \Leftrightarrow \quad c_{i}>c_{i+1}
\quad \Leftrightarrow \quad \sigma (l-i) < \sigma (l-i+1).
\end{equation}
In particular, $b_{l}\leq b_{l-1}+1$, hence $b_{l}-v\leq
b_{l-1}+a_{l-1}+1$, and if equality holds, then $b_{l-1} = \lfloor p
\rfloor$, so $\sigma (1)<\sigma (2)$.  Using Lemma
\ref{lem:factorization}, and recalling that the definition
\eqref{e:F-twist} of $F^{\sigma }_{\lambda }(x;q)$ is $
\overline{E^{\sigma w_{0}}_{-\lambda }(x;q)}$, we have
\begin{gather}\label{e:factor-E}
E^{\sigma ^{-1}}_{(a_{l-1},\ldots,a_{1},-u)}(x;q) = x_{l}^{-u} E^{\tau
^{-1}}_{(a_{l-1},
\ldots, a_{1})}(x_{1},\ldots,x_{l-1};q)\\
\label{e:factor-F}
F^{\sigma ^{-1}}_{(b_{l},\ldots,b_{1})+(-v,a_{l-1},\ldots,a_{1})}(x;q) =
x_{1}^{b_{l}-v} F^{\theta ^{-1}}_{(b_{l-1},\ldots,b_{1})+(a_{l-1},\ldots,a_{1})}(x_{2},\ldots,x_{l};q),
\end{gather}
where $\tau $, $\theta $ are the head and tail of $\sigma $, as in
Proposition \ref{prop:winding}.  Note that $\sigma $ is a winding
permutation.  From \eqref{e:b-c-sigma} we also see that
$(b_{l-1},\ldots,b_{1}) = \eta + \lfloor p \rfloor \cdot
(1,\ldots,1)$, where $\eta $ is the descent indicator of $\sigma $.
Adding a constant vector $k\cdot (1,\ldots,1)$ to the index $\lambda $
multiplies any $F^{\pi }_{\lambda }(x;q)$ by $(\prod _{i}x_{i})^{k}$.
Using this and Proposition \ref{prop:winding}, we can replace
\eqref{e:factor-F} with
\begin{equation}\label{e:factor-F-2}
F^{\sigma ^{-1}}_{(b_{l},\ldots,b_{1})+(-v,a_{l-1},\ldots,a_{1})}(x;q)
= x_{1}^{b_{l}-v}x_{2}^{b_{l-1}}\cdots x_{l}^{b_{1}} F^{\tau ^{-1}
}_{(a_{l-1},\ldots,a_{1})}(x_{2},\ldots,x_{l};q)
\end{equation}
Using the Cauchy identity \eqref{e:Cauchy} from Theorem
\ref{thm:Cauchy} in $l-1$ variables, with twisting permutation $\tau
^{-1}$, and substituting $x_{i}^{-1}$ for $x_{i}$, we obtain
\begin{equation}\label{e:Cauchy-applied}
\frac{\prod _{i<j} (1 - q\, t\, y_{j}/x_{i})}{\prod _{i\leq j} (1 -
t\, y_{j}/x_{i})} = \sum _{\aA \geq 0} t^{|\aA |}\, F^{\tau
^{-1}}_{\aA }(y_{1},\ldots,y_{l-1};q)\overline{E^{\tau ^{-1}}_{\aA
}(x_{1},\ldots,x_{l-1};q)}.
\end{equation}
Setting $y_{i} = x_{i+1}$ in \eqref{e:Cauchy-applied} and multiplying
both sides by $w_{0}(x_{1}^{u}x_{l}^{-v}x^{\bb })$, then using
\eqref{e:factor-E} and \eqref{e:factor-F-2}, and finally applying
$w_{0}$ to both sides, yields \eqref{e:main-L-unstraightened}.
\end{proof}

\subsection{LLT data and the \texorpdfstring{$\dinv $} {dinv}
statistic}
\label{ss:dinv-data}

Our next goal is to deduce the combinatorial version of our shuffle
theorem---that is, the identity \eqref{e:main-G-pre} previewed in the
introduction and restated as \eqref{e:main-G}, below---from Theorem
\ref{thm:main-L}.  To do this we first need to define the data that
will serve to attach LLT polynomials to lattice paths, and relate
these to the combinatorial statistic $\dinv _{p}(\lambda )$.

We will be concerned with lattice paths $\lambda $ lying weakly below
the line segment
\begin{equation}\label{e:line}
y + p\, x= s\qquad (p = s/r)
\end{equation}
between arbitrary points $(0,s)$ and $(r,0)$ on the positive $y$ and
$x$ axes.

We always assume that the slope $-p$ of the line is irrational.
Clearly it is possible to perturb any line slightly so as make its
slope irrational, without changing the set of lattice points, and
therefore also the set of lattice paths, that lie below the line.  All
dependence on $p$ in the combinatorial constructions to follow comes
from comparisons between $p$ and various rational numbers.  By taking
$p$ to be irrational, we avoid the need to resolve ambiguities that
would result from equality occurring in the comparisons.

\begin{defn}\label{def:dinv}
Let $\lambda $ be a south-east lattice path in the first quadrant with
endpoints on the axes.  Let $Y$ be the Young diagram enclosed by the
positive axes and $\lambda $.  The {\em arm} and {\em leg} of a box
$y\in Y$ are, as usual, the number of boxes in $Y$ strictly east of
$y$ and strictly north of $y$, respectively.  Given a positive
irrational number $p$, we define $\dinv _{p}(\lambda )$ to be the
number of boxes in $Y$ whose arm $a$ and leg $\ell$ satisfy
\begin{equation}\label{e:p-balanced-2}
\frac{\ell}{a+1}<p<\frac{\ell+1}{a},
\end{equation}
where we interpret $(\ell+1)/a$ as $+\infty $ if $a = 0$.
\end{defn}

Geometrically, condition \eqref{e:p-balanced-2} means that some line
of slope $-p$ crosses both the east step in $\lambda $ at the top of
the leg and the south step at the end of the arm, as shown in Figure
\ref{fig:balanced-hook}.  Since $p$ is irrational, such a line can
always be assumed to pass through the interiors of the two steps.

\begin{figure}
\begin{tikzpicture}[scale=.5]
\draw [help lines, ->] (0,0) -- (0,5.5);
\draw [help lines, ->] (0,0) -- (8.5,0);
\filldraw [fill = black!10, draw=black] (1,4) -- (1,1) -- (5,1) -- (5,2)
-- (2,2) -- (2,4) -- cycle;
\draw [very thick] (0,5) -- (0,4) -- (3,4) -- (3,3) -- (5,3) -- (5,1) -- (7,1)
-- (7,0) -- (8,0);
\draw (.5,4.5) -- (5.8,1.2);
\end{tikzpicture}
\caption{\label{fig:balanced-hook}}
\end{figure}

To each lattice path weakly below the line \eqref{e:line} we now attach a
tuple of one-row skew shapes $\beta /\alpha $ and a permutation
$\sigma $.  The index $\nu (\lambda )$ of the LLT polynomial in
\eqref{e:main-G-pre} and \eqref{e:main-G} will be defined in terms of
these data.

\begin{defn}\label{def:LLT-data}
Let $\lambda $ be a south-east lattice path from $(0,\lfloor s
\rfloor)$ to $(l-1,0)$ which is weakly below the line $y+p\, x = s$ in
\eqref{e:line}, where $l-1\leq r$ and $p$ is irrational.  For $i =
1,\ldots,l$, let
\begin{equation}\label{e:top-points}
d_{i} = \lfloor s-p(i-1) \rfloor
\end{equation}
be the $y$ coordinate of the highest lattice point weakly below the
given line at $x=i-1$.  Let
\begin{equation}\label{e:alpha-beta}
\alpha = (\alpha _{l},\ldots,\alpha _{1}),\quad \beta = (\beta
_{l},\ldots,\beta _{1})
\end{equation}
be the vectors of integers $0\leq \alpha _{i}\leq \beta _{i}$, written
in reverse order, such that the south steps in $\lambda $ on the line
$x = i-1$ go from $y = d_{i}-\alpha _{i}$ to $y = d_{i}-\beta _{i}$.
Let
\begin{equation}\label{e:gaps}
c_{i}  = s-p(i-1)-d_{i} = \fp{ s-p(i-1) }
\end{equation}
be the gap between the given line and the highest lattice point weakly
below it along the line $x = i-1$.
Let $\sigma \in S_{l}$ be the permutation with $\sigma
(1),\ldots,\sigma (l)$ in the same relative order as
$c_{l},\ldots,c_{1}$, i.e., such that $\sigma (c_{l},\ldots,c_{1})$ is
increasing.  The vectors $\alpha $ and $\beta $ and the permutation
$\sigma $ are the {\em LLT data} associated with $\lambda $ and the
given line.
\end{defn}

\begin{example}\label{ex:LLT-data}
The first diagram in Figure \ref{fig:lambda-prime} shows a line $y +
px = s$ with $p\approx 1.36$, $s\approx 9.27$, and a path $\lambda $
below it from $(0,\lfloor s \rfloor) = (0,9)$ to $(l-1,0) = (6,0)$
with $l = 7$.

In this example, the $y$ coordinates of the highest lattice points
below the line at $x = 0,\ldots,6$ are $(d_{1},\ldots,d_{7}) =
(9,7,6,5,3,2,1)$.  The runs of south steps in $\lambda $ go from $y$
coordinates $(9,6,6,3,1,1,0)$ to $(6,6,3,1,1,0,0)$.  Subtracting these
from the $d_{i}$ and listing them in reverse order gives
\begin{equation}\label{e:alpha-beta-example}
\alpha = (1,1,2,2,0,1,0),\quad \beta = (1,2,2,4,3,1,3).
\end{equation}
The gaps, in reverse order, are $(c_{7},\ldots,c_{1})\approx (.11,
.47, .83, .19, .55, .91, .27)$, giving
\begin{equation}\label{e:sigma-example}
\sigma = (1,4,6,2,5,7,3).
\end{equation}
\end{example}

\begin{prop}\label{prop:h=dinv}
Given the line \eqref{e:line} and a lattice path $\lambda $ weakly
below it satisfying the conditions in Definition \ref{def:LLT-data},
let $\alpha ,\beta, \sigma $ be the associated LLT data. Then
\begin{equation}\label{e:h=dinv}
\dinv _{p}(\lambda ) = h_{\sigma }(\beta /\alpha ),
\end{equation}
where $\dinv _{p}(\lambda )$ is given by Definition \ref{def:dinv} and
$h_{\sigma }(\beta /\alpha )$ is as in Corollary~\ref{cor:G-versus-L}.
\end{prop}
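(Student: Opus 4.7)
The plan is to show that both sides of \eqref{e:h=dinv} count the same geometric configurations: pairs consisting of a lattice point of $\lambda$ and a south step of $\lambda$ lying in a strictly later column such that the line of slope $-p$ through the lattice point crosses the interior of the south step.

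First I would invoke Remark~\ref{rem:triples} to reinterpret $h_{\sigma}(\beta/\alpha)$. After shifting the $j$-th row of the tuple $\beta/\alpha$ to the right by $\epsilon\sigma(j)$ for small $\epsilon>0$, $h_{\sigma}(\beta/\alpha)$ equals the number of pairs (box boundary in row $j$, box interior in row $i$) with $i < j$ such that the shifted boundary lies strictly inside the shifted interior. Using the dictionary in which row $j$ of the tuple corresponds to column $i^*=l-j+1$ of $\lambda$, a box boundary at position $m$ in row $j$ corresponds to the lattice point $(i^*-1,d_{i^*}-m)$ on $\lambda$, and a box interior corresponds to a south step in column $i^*$, this count becomes a count of alignments intrinsic to $\lambda$.

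Next I would verify that the alignment condition is exactly that the line $L_{P}\colon y+px=y_{P}+px_{P}$ through the lattice point $P$ crosses the interior of the south step $s$, with $P$'s column strictly to the left of $s$'s column. This relies on the fact that the shifts $\epsilon\sigma(j)$ have the same relative order as the $c_{i^*}$'s by definition of $\sigma$, so alignment reduces to the comparison $C_{P}\in(y_{s}-1+px_{s},\,y_{s}+px_{s})$ where $s$ runs from $(x_{s},y_{s})$ to $(x_{s},y_{s}-1)$; the two cases of alignment depending on whether $c_{i^*_P}\lessgtr c_{i^*_s}$ correspond exactly to $m_{P}\in\{m_{s},m_{s}-1\}$.

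The last, most delicate step is to construct a bijection between such pairs $(P,s)$ and $p$-balanced boxes $B\in Y$. The proposed map is $(P,s)\mapsto B=(x_{P}+1,\,y_{s})$. That $B\in Y$ follows from $y_{s}\le d_{x_{s}}-\beta_{x_{s}}\le d_{x_{P}+1}-\beta_{x_{P}+1}$, which uses the monotonicity of the bottom heights along $\lambda$. That the south step at the end of $B$'s arm is $s$ follows because, by the same monotonicity, $(k,y_{s})\in Y$ exactly for $k=x_{P}+1,\ldots,x_{s}$ and not for $k=x_{s}+1$. For the $p$-balance of $B$, with $\ell=d_{x_{P}+1}-\beta_{x_{P}+1}-y_{s}$ and $a=x_{s}-x_{P}-1$, the inequality $\ell<p(a+1)$ is immediate from $y_{P}\ge d_{x_{P}+1}-\beta_{x_{P}+1}$ together with $y_{s}>y_{P}-p(x_{s}-x_{P})$, while $pa-1<\ell$ must be derived from the other crossing inequality $y_{s}<y_{P}-p(x_{s}-x_{P})+1$ combined with an upper bound $y_{P}\le d_{x_{P}+1}-\beta_{x_{P}+1}+p$. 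Invertibility is established by recovering $s$ as the south step at the end of $B$'s arm and $P$ as the unique lattice point of $\lambda$ in column $x_{P}+1=i_{0}$ whose $C$-value lies in $s$'s $C$-interval; $p$-balance is precisely what ensures this $P$ exists in the path.

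The main obstacle will be verifying the second $p$-balance inequality and the corresponding existence of $P$ in the inverse direction. Both amount to a careful bookkeeping, via the continuity relations $d_{k}-\beta_{k}=d_{k+1}-\alpha_{k+1}$ and the interplay between $p$ and the fractional parts $c_{i^*}$, to rule out the possibility that $P$ lies too high in its column---precisely the case in which $L_{P}$ would fail to cross any south step strictly to the right.
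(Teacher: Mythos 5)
Your first step—using Remark~\ref{rem:triples} to reinterpret $h_{\sigma}(\beta/\alpha)$ as the number of pairs $(P,s)$ consisting of a lattice point $P$ of $\lambda$ and a south step $s$ in a strictly later column such that the line of slope $-p$ through $P$ crosses the interior of $s$—is correct and matches the first half of the paper's argument (which carries it out via the shear $(x,y)\mapsto(x,y+px)$ and the bands $B_S$). The gap is in the proposed bijection $(P,s)\mapsto B=(x_P+1,\,y_s)$: no such bijection exists, and in particular the upper bound $y_P\le d_{x_P+1}-\beta_{x_P+1}+p$ that you need for the inequality $pa-1<\ell$ is simply false. Concretely, take $p$ slightly above $3/10$, $s$ slightly above $10$, and $\lambda$ the path $(0,10)\to(0,1)\to(5,1)\to(5,0)\to(\lfloor r\rfloor,0)$. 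The pair $P=(0,2)$, $s=$ the south step from $(5,1)$ to $(5,0)$ satisfies the crossing condition (its $C$-value $2$ lies in $s$'s $C$-interval $\approx(1.5,2.5)$), but here $y_P=2$ while the column bottom is $d_1-\beta_1=1$, so $y_P>1+p$ and your upper bound fails. Correspondingly, $B=(1,1)$ has arm $4$ and leg $0$, hence $pa-1\approx 0.2>0=\ell$, so $B$ is \emph{not} $p$-balanced. In the other direction, the $p$-balanced box $(2,1)$ (arm $3$, leg $0$) has no preimage, since $\lambda$ has no lattice point at $x=1$ above height $1$. The per-column count of pairs and the per-column location of $p$-balanced boxes genuinely disagree; only the total (for each fixed $s$) agrees.

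This is precisely why the paper does not attempt a closed-form bijection. Instead it proves, for each south step $S$, that in the band $B_S$ of the sheared path $\lambda'$ the step endpoints and the sloped-step fragments \emph{alternate}, starting with an endpoint and ending with a fragment, by analyzing the connected components of $\lambda'\cap B_S$ and how they enter and exit the band. This alternation forces the two counts to agree band by band without producing an explicit formula for the pairing—and indeed in the example above the pairing induced by alternation sends $(0,2)$ to the box $(2,1)$, not $(1,1)$. If you want to salvage your approach you would need to replace $B=(x_P+1,y_s)$ by this left-to-right matching inside each band, at which point you are essentially reproducing the paper's component analysis.
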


\begin{proof}
Let $\lambda '$ be the image of $\lambda $ under the transformation in
the plane that sends $(x,y)$ to $(x,y+px)$. Then $\lambda '$ is a path
composed of unit south steps and sloped steps $(1,p)$ (transforms of
east steps), which starts at $(0,\lfloor s \rfloor$) and stays weakly
below the horizontal line $y = s$ (transform of the line $y+p\, x=s$).

The south steps in $\lambda '$ on the line $x = i-1$ run from $y = s -
(c_{i} + \alpha _{i})$ to $y = s - (c_{i} + \beta _{i})$.  This means
that if we offset the $i$-th row $(\beta _{l+1-i})/(\alpha _{l+1-i})$
in the tuple of one-row skew diagrams $\beta /\alpha $ by $c_{l+1-i}$,
then the $x$ coordinate on each box of $\beta /\alpha $ covers the
same unit interval as does the distance below the line $y=s$ on the
south step in $\lambda '$ corresponding to that box.  See Figure
\ref{fig:lambda-prime} for an example.

\begin{figure}
\begin{tikzpicture}[scale=.5, baseline=-.5cm]
\draw[help lines] (0,0) grid (7.2,9.5);
\draw [thick] (0,9.2727) -- (6.8,0);
\draw [very thick] (0,9) -- (0,6) -- (2,6) -- (2,3) -- (3,3) -- (3,1)
-- (5,1) -- (5,0) -- (6,0);
\end{tikzpicture}
\qquad
\begin{tikzpicture}[scale=.5, baseline=.5cm]
\draw[help lines] (0,4.5) grid (6,9.7);
\draw [thick] (0,9.2727) -- (6,9.2727);
\draw [thick] (0,9) -- (0,6) -- (2,8.7272) -- (2,5.7272) -- (3,7.0909)
-- (3,5.0909) -- (5,7.8181) -- (5,6.8181) -- (6,8.1818);
\draw [fill] (0,9) circle [radius=.07];
\draw [fill] (0,8) circle [radius=.07];
\draw [fill] (0,7) circle [radius=.07];
\draw [fill] (0,6) circle [radius=.07];
\draw [fill] (1,7.3636) circle [radius=.07];
\draw [fill] (2,8.7272) circle [radius=.07];
\draw [fill] (2,7.7272) circle [radius=.07];
\draw [fill] (2,6.7272) circle [radius=.07];
\draw [fill] (2,5.7272) circle [radius=.07];
\draw (3,8.0909) circle [radius=.07];
\draw [fill] (3,7.0909) circle [radius=.07];
\draw [fill] (3,6.0909) circle [radius=.07];
\draw [fill] (3,5.0909) circle [radius=.07];
\draw (4,7.4545) circle [radius=.07];
\draw [fill] (4,6.4545) circle [radius=.07];
\draw [fill] (5,7.8181) circle [radius=.07];
\draw [fill] (5,6.8181) circle [radius=.07];
\draw [fill] (6,8.1818) circle [radius=.07];
\draw [thick] (-.2,9) -- (.2,9);
\draw [thick] (.8,8.3636) -- (1.2,8.3636);
\draw [very thick,dotted] (1,8.3636) -- (1,9.2727);
\draw [thick] (1.8,8.7272) -- (2.2,8.7272);
\draw [very thick,dotted] (2,8.7272) -- (2,9.2727);
\draw [thick] (2.8,9.0909) -- (3.2,9.0909);
\draw [thick] (3.8,8.4545) -- (4.2,8.4545);
\draw [very thick,dotted] (4,8.4545) -- (4,9.2727);
\draw [thick] (4.8,8.8181) -- (5.2,8.8181);
\draw [very thick,dotted] (5,8.8181) -- (5,9.2727);
\draw [thick] (5.8,9.1818) -- (6.2,9.1818);
\end{tikzpicture}
\qquad
\begin{tikzpicture}[scale=.5]
\draw [shift={(.0909,0)}] (1,1) grid (1,0);
\node [right] at (5,.5) {$(\beta _{7})/(\alpha _{7}) = (1)/(1)$};
\draw [shift={(.4545,1.5)}] (2,1) grid (1,0);
\node [right] at (5,2) {$(\beta _{6})/(\alpha _{6}) = (2)/(1)$};
\draw [shift={(.8181,3)}] (2,1) grid (2,0);
\node [right] at (5,3.5) {$(\beta _{5})/(\alpha _{5}) = (2)/(2)$};
\draw [shift={(.1818,4.5)}](4,1) grid (2,0);
\node [right] at (5,5) {$(\beta _{4})/(\alpha _{4}) = (4)/(2)$};
\draw [shift={(.5454,6)}] (3,1) grid (0,0);
\node [right] at (5,6.5) {$(\beta _{3})/(\alpha _{3}) = (3)/(0)$};
\draw [shift={(.9090,7.5)}](1,1) grid (1,0);
\node [right] at (5,8) {$(\beta _{2})/(\alpha _{2}) = (1)/(1)$};
\draw [shift={(.2727,9)}] (3,1) grid (0,0);
\node [right] at (5,9.5) {$(\beta _{1})/(\alpha _{1}) = (3)/(0)$};
\end{tikzpicture}
\caption{\label{fig:lambda-prime}
(i) A path $\lambda $ under $y+px=s$ with $p\approx 1.36$, $s\approx
9.27$, $l=7$. (ii) Transformed path $\lambda '$ under $y = s$, with
gaps $c_{i}$ marked.  (iii) Bottom to top: tuple of rows $(\beta
_{7},\ldots,\beta _{1}) /(\alpha_{7},\ldots,\alpha _{1}) $ offset by
$(c_{7},\ldots,c_{1})$.}
\end{figure}

Since $0<c_{i}<1$ and the numbers $c_{l},\ldots,c_{1}$ are in the same
relative order as $\sigma (1),\ldots,\sigma (l)$, the description of
$h_{\sigma }(\beta /\alpha )$ in Remark \ref{rem:triples} still
applies if we offset row $i$ by $c_{l+1-i}$ instead of $\epsilon
\sigma (i)$.  Mapping this onto $\lambda '$, we see that $h_{\sigma
}(\beta /\alpha )$ is the number of horizontal alignments between any
endpoint of a step in $\lambda '$ and the interior of a south step
occurring later in $\lambda '$.  To put this another way, for each
south step $S$ in $\lambda '$, let $B_{S}$ denote the interior of the
horizontal band of height 1 to the left of $S$ in the plane.  Then
$h_{\sigma }(\beta /\alpha )$ is the number of pairs consisting of a
south step $S$ and a point $P\in B_{S}$ which is an endpoint of a step
in $\lambda '$.

For comparison, $\dinv _{p}(\lambda )$ is the number of pairs
consisting of a south step $S$ in $\lambda '$ and a sloped step which
meets $B_{S}$.  To complete the proof it suffices to show that each
band $B_{S}$ contains the same number of step endpoints $P$ as the
number of sloped steps that meet $B_{S}$.  In fact, we make the
following stronger claim: {\em within each band $B_{S}$, step
endpoints alternate from left to right with fragments of sloped steps,
starting with a step endpoint and ending with a sloped step fragment.}

To see this, consider a connected component $C$ of $\lambda '\cap
B_{S}$.  Each component $C$ either enters $B_{S}$ from above along a
south step or from below along a sloped step, and exits $B_{S}$ either
at the top along a sloped step or at the bottom along a south step,
except in two degenerate situations.  One of these occurs if $C$
contains the starting point $(0,\lfloor s \rfloor)$ of $\lambda '$.
In this case we regard $C$ as entering $B_{S}$ from above.  The other
is if $C$ contains a sloped step that adjoins $S$.  Then we regard $C$
as exiting $B_{S}$ at the top.

Each component $C$ thus belongs to one of four cases shown in Figure
\ref{fig:band-components}.  Note that since $B_{S}$ has height 1, it
cannot contain a full south step of $\lambda '$.  In Figure
\ref{fig:band-components} we have chosen $p<1$ in order to illustrate
the possibility that $B_{S}$ might contain full sloped steps of
$\lambda '$.  If $p>1$, then $B_{S}$ can only meet sloped steps in
proper fragments.

\begin{figure}
\begin{tikzpicture}
\filldraw [dashed, fill=black!10] (0,1) -- (3,1) -- (3,0) -- (0,0);
\draw [very thick] (3,0) -- (3,1);
\node [right] at (3,.5) {$S$};
\draw [thick] (.5,1.3) -- (.5,.3) -- (1.5, .75) -- (2.5, 1.2);
\draw [fill] (.5,.3) circle [radius=.05];
\draw [fill] (1.5, .75) circle [radius=.05];

\filldraw [dashed, fill=black!10] (4,1) -- (7,1) -- (7,0) -- (4,0);
\draw [very thick] (7,0) -- (7,1);
\node [right] at (7,.5) {$S$};
\draw [thick] (5,1.3) -- (5,.3) -- (6, .75) -- (6, -.25);
\draw [fill] (5,.3) circle [radius=.05];
\draw [fill] (6,.75) circle [radius=.05];

\filldraw [dashed, fill=black!10] (8,1) -- (11,1) -- (11,0) -- (8,0);
\draw [very thick] (11,0) -- (11,1);
\node [right] at (11,.5) {$S$};
\draw [thick] (8,-.15) -- (9,.3) -- (10, .75) -- (11, 1.2);
\draw [fill] (9,.3) circle [radius=.05];
\draw [fill] (10, .75) circle [radius=.05];

\filldraw [dashed, fill=black!10] (12,1) -- (15,1) -- (15,0) -- (12,0);
\draw [very thick] (15,0) -- (15,1);
\node [right] at (15,.5) {$S$};
\draw [thick] (12.5,-.15) -- (13.5,.3) -- (14.5, .75) -- (14.5, -.25);
\draw [fill] (13.5,.3) circle [radius=.05];
\draw [fill] (14.5, .75) circle [radius=.05];
\end{tikzpicture}
\caption{\label{fig:band-components}}
\end{figure}

On each component $C$, step endpoints clearly alternate with sloped
step fragments, starting with a step endpoint if $C$ enters from
above, or with a sloped step fragment if $C$ enters from below, and
ending with a step endpoint if $C$ exits at the bottom, or with a
sloped step fragment if $C$ exits at the top.  Since the distance from
the line $y=s$ to the starting point of $\lambda '$ is less than $1$,
the leftmost component $C$ of $\lambda '\cap B_{S}$ always enters
$B_{S}$ from the top.  Each subsequent component from left to right
must enter $B_{S}$ from the same side (top or bottom) that the
previous component exited.  This implies the claim stated above.
\end{proof}

\subsection{Shuffle theorem for paths under a line}
\label{ss:main-G}

We now prove the identity previewed as \eqref{e:main-G-pre} in the
introduction.

\begin{thm}\label{thm:main-G}
Let $r,s$ be positive real numbers with $p = s/r$ irrational.  We
have the identity
\begin{equation}\label{e:main-G}
D_{b_{1},\ldots,b_{l}} \cdot 1 = \sum _{\lambda } t^{a(\lambda )}
q^{\dinv _{p}(\lambda )} \omega (\Gcal_{\nu (\lambda )}(X;q^{-1})),
\end{equation}
where the sum is over lattice paths $\lambda $ from $(0,\lfloor s
\rfloor)$ to $(\lfloor r \rfloor,0)$ lying weakly below the line
\eqref{e:line} through $(0,s)$ and $(r,0)$, and the other pieces of
\eqref{e:main-G} are defined as follows.

The integer $a(\lambda )$ is the number of lattice squares enclosed
between $\lambda $ and $\delta $, where $\delta$ is the highest path from
$(0,\lfloor s\rfloor)$ to $(\lfloor r \rfloor,0)$ weakly below the
given line. The index $b_{i}$ is the number of south steps in $\delta
$ along the line $x = i-1$, for $i=1,\ldots,l$, where $l = \lfloor r
\rfloor+1$.  The integer $\dinv _{p}(\lambda )$ is given by Definition
\ref{def:dinv}.

The LLT polynomial $\Gcal_{\nu (\lambda )}(X;q)$ is indexed by the
tuple of one-row skew shapes $\nu (\lambda ) = \sigma (\beta /\alpha
)$, where $\alpha ,\beta $ and $\sigma $ are the LLT data associated
to $\lambda $ in Definition \ref{def:LLT-data}.  More explicitly,
$\sigma (\beta /\alpha ) = (\beta _{w_{0}\sigma^{-1}(1)},\ldots,\beta
_{w_{0}\sigma^{-1}(l)})/(\alpha_{w_{0}\sigma^{-1}(1)},\ldots,\alpha
_{w_{0}\sigma^{-1}(l)})$, where $\alpha =(\alpha _{l},\ldots,\alpha
_{1})$ and $\beta =(\beta _{l},\ldots,\beta _{1})$.

The operator $D_{b_{1},\ldots,b_{l}}$ on the left hand side is a Negut
element in $\Ecal $, as defined in \S \ref{ss:Negut-elements}, so that
$D_{b_{1},\ldots,b_{l}} \cdot 1$ satisfies \eqref{e:Db-Hb}.
\end{thm}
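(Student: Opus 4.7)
The plan is to derive Theorem \ref{thm:main-G} by combining Theorem \ref{thm:main-L} with the polynomial-part reduction provided by Corollary \ref{cor:G-versus-L}.

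First, I would apply $\omega$ to both sides of \eqref{e:main-G}. Since $\omega^{2} = 1$, the target becomes
\[
\omega(D_{\bb}\cdot 1) \;=\; \sum_{\lambda} t^{a(\lambda)}\, q^{\dinv_{p}(\lambda)}\, \Gcal_{\nu(\lambda)}(X;q^{-1}).
\]
By Corollary \ref{cor:Db-Hb}, the left-hand side evaluates in $l$ variables to $\Hcat_{\bb}(x)_{\pol}$ and is determined by that evaluation. By Lemma \ref{lem:finite-variables}, each $\Gcal_{\nu(\lambda)}(X;q^{-1})$ is supported on Schur functions $s_{\mu}$ with $\ell(\mu) \leq l$, since $\nu(\lambda)$ has $l$ rows. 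Hence both sides are determined by their evaluation at $x_{1},\ldots,x_{l}$, and it suffices to prove the identity there.

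Next, I would invoke Theorem \ref{thm:main-L} with $u = 0$ and $v = -\lfloor s - p l \rfloor$. The role of $v$ is to reconcile two conventions for $b_{l}$: Theorem \ref{thm:main-L} uses $b_{l}^{\mathrm{L}} = \lfloor s - p(l-1) \rfloor - \lfloor s - p l \rfloor$, corresponding to the highest path extended past the $x$-axis, whereas Theorem \ref{thm:main-G} uses $b_{l} = d_{l}$ with $d_{l} = \lfloor s - p(l-1) \rfloor$, the number of south steps in the last column of the finite path. Since $l = \lfloor r \rfloor + 1 > r$ forces $s - p l < 0$, the quantity $v$ is a positive integer and $b_{l}^{\mathrm{L}} - v = d_{l}$. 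Theorem \ref{thm:main-L} then writes $\Hcat_{\bb}(x)$ as a sum over tuples $\aA = (a_{1},\ldots,a_{l-1})$ of nonnegative integers of terms $t^{|\aA|}\,\Lcal^{\sigma}_{\beta/\alpha}(x;q)$. Taking the polynomial part and applying Corollary \ref{cor:G-versus-L} retains only the summands with $\alpha_{i} \leq \beta_{i}$ in every coordinate, each contributing $t^{|\aA|}\, q^{h_{\sigma}(\beta/\alpha)}\, \Gcal_{\sigma(\beta/\alpha)}(x;q^{-1})$.

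Finally, I would match the surviving terms with lattice paths. Translating the vectors from Theorem \ref{thm:main-L} into the indexing of Definition \ref{def:LLT-data}, one finds $\alpha_{j} = a_{j-1}$ and $\beta_{j} = b_{j} + a_{j}$ with $a_{0} = a_{l} := 0$. The path $\lambda$ associated to $\aA$ is the one whose east step from $x = j-1$ to $x = j$ lies at height $d_{j+1} - a_{j}$; the inequalities $\alpha_{j} \leq \beta_{j}$ (equivalently $a_{j-1} \leq b_{j} + a_{j}$) are precisely the condition that $\lambda$ be a valid south-east lattice path from $(0, \lfloor s\rfloor)$ to $(\lfloor r\rfloor,0)$ weakly below the line $y + p x = s$. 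Under this bijection, the LLT data $(\alpha,\beta,\sigma)$ from Theorem \ref{thm:main-L} coincide with those of Definition \ref{def:LLT-data}, so $\sigma(\beta/\alpha) = \nu(\lambda)$; the sum $|\aA| = \sum_{j} a_{j}$ equals the column-by-column height gap between the highest path $\delta$ and $\lambda$, i.e., the enclosed area $a(\lambda)$; and Proposition \ref{prop:h=dinv} identifies $h_{\sigma}(\beta/\alpha)$ with $\dinv_{p}(\lambda)$. Assembling these identifications yields the desired $l$-variable identity, and undoing the initial $\omega$ recovers \eqref{e:main-G}. The main care needed is in reconciling the reversed indexing and the $v$-offset between the two theorems, but this is bookkeeping rather than a substantive obstacle.
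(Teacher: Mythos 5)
Your proposal is correct and follows essentially the same route as the paper's proof: apply $\omega$, reduce to $l$ variables via Corollary~\ref{cor:Db-Hb} and Lemma~\ref{lem:finite-variables}, apply Theorem~\ref{thm:main-L} with $u=0$ and $v = -\lfloor s - pl\rfloor$, then use Corollary~\ref{cor:G-versus-L} together with Proposition~\ref{prop:h=dinv} to identify the surviving terms with lattice paths, their areas, and their $\dinv_{p}$ statistics. The bookkeeping you carry out (the offset $v$ reconciling the two conventions for $b_{l}$, the identifications $\alpha_{j}=a_{j-1}$, $\beta_{j}=b_{j}+a_{j}$, and the translation of the polynomial-part condition into path existence) is exactly what the paper does.
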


\begin{proof}
We prove the equivalent identity
\begin{equation}\label{e:omega-main}
\omega (D_{b_{1},\ldots,b_{l}} \cdot 1) = \sum _{\lambda } t^{a(\lambda )}
q^{\dinv _{p}(\lambda )} \Gcal_{\nu (\lambda )}(X;q^{-1}).
\end{equation}
By Corollary \ref{cor:Db-Hb} and Lemma \ref{lem:finite-variables},
both sides of \eqref{e:omega-main} involve only Schur functions
$s_{\lambda }(X)$ indexed by partitions such that $\ell(\lambda )\leq l$.
It therefore suffices to prove that \eqref{e:omega-main} holds when
evaluated in $l$ variables $x_{1},\ldots , x_{l}$.  After doing this
and using the formula \eqref{e:Db-Hb} from Corollary \ref{cor:Db-Hb},
the desired identity becomes
\begin{equation}\label{e:main-G-Hb}
(\Hcat _{\bb })_{\pol} = \sum _{\lambda } t^{a(\lambda )}\, q^{\dinv
_{p}(\lambda )}\, \Gcal _{\nu (\lambda )}(x_{1},\ldots,x_{l}; q^{-1}).
\end{equation}
This is the same identity \eqref{e:main-G-Hb-pre} that was mentioned
in the introduction to \S \ref{s:schiffmann}.  We now prove it using
Theorem \ref{thm:main-L}.

Let $b'_{i} = \lfloor s-p(i-1) \rfloor - \lfloor s-pi \rfloor$.  As in
Remark \ref{rem:extended-path}, this is the number of south steps
along $x = i-1$ in the highest south-east path $\delta '$ under our
given line, where $\delta '$ starts at $(0, \lfloor s \rfloor)$ and
extends forever.  For $i<l$ we have $b'_{i} = b_{i}$.  On the line
$x=l-1=\lfloor r \rfloor$, however, the path $\delta $ stops at
$(l-1,0)$, while $\delta '$ may extend below the $x$-axis, giving
$b_{l}\leq b'_{l}$.

We now apply Theorem \ref{thm:main-L} with $b'_{i}$ in place of
$b_{i}$, $u=0$, and $v = b'_{l} - b_{l}$, and then take the polynomial
part on both sides of \eqref{e:main-L}.  This gives the same left hand
side as in \eqref{e:main-G-Hb}.  On the right hand side, by
Corollary~\ref{cor:G-versus-L}, only those terms survive for which
the index $\aA $ satisfies $(a_{l-1},\ldots,a_{1},0)\leq
(b_{l},\ldots,b_{1})+(0, a_{l-1},\ldots,a_{1})$ in each coordinate,
that is, for which
\begin{equation}\label{e:area-columns}
a_{l-1}\leq b_{l}\quad \text{and}\quad a_{i}\leq a_{i+1} + b_{i+1}
\text{ for } i=1,\ldots,l-2.
\end{equation}

Now, \eqref{e:area-columns} is precisely the condition for there to
exist a (unique) lattice path $\lambda $ from $(0,\lfloor s \rfloor)$
to $(\lfloor r \rfloor,0)$ such that $a_{i}$ is the number of lattice
squares in the $i$-th column (defined by $x\in [i-1,i]$) of the region
between $\lambda $ and the highest path $\delta $.  Moreover, when
\eqref{e:area-columns} holds, the LLT data for $\lambda $, as in
Definition \ref{def:LLT-data}, are given by
\begin{equation}\label{e:particular-LLT-data}
\begin{aligned}
\beta &	 = (b_{l},\ldots,b_{1})+(0, a_{l-1},\ldots,a_{1}),\\
\alpha & = (a_{l-1},\ldots,a_{1},0),
\end{aligned}
\end{equation}
and $\sigma \in S_{l}$ such that $\sigma (1),\ldots,\sigma (l)$ are in
the same relative order as $c_{l},\ldots,c_{1}$, where $c_{i} =
\fp{s-p(i-1)}$, as in Theorem \ref{thm:main-L}.  Hence, by 
Corollary~\ref{cor:G-versus-L} and Proposition~\ref{prop:h=dinv},
we have
\begin{equation}\label{e:L=G}
\Lcal ^{\sigma }_{((b_{l},\ldots,b_{1})+(0,
a_{l-1},\ldots,a_{1}))/(a_{l-1},\ldots,a_{1},0)}(x;q)_{\pol}
= q^{\dinv
_{p}(\lambda )} \Gcal_{\nu (\lambda )}(x;q^{-1}).
\end{equation}
When \eqref{e:area-columns} holds we clearly also have $a(\lambda ) =
|\aA |$.  This shows that the polynomial part of the right hand side
in \eqref{e:main-L} is the same as the right hand side of
\eqref{e:main-G-Hb}.
\end{proof}

\begin{remark}\label{rem:y-extension}
The preceding argument also goes through with $u>0$ in Theorem
\ref{thm:main-L} to give a slightly more general version of Theorem
\ref{thm:main-G} in which the sum is over lattice paths $\lambda $
that start at a higher point $(0,n)$ on the $y$ axis, with $n =
\lfloor s \rfloor + u$, go directly south to $(0,\lfloor s \rfloor)$,
and then continue below the given line to $(l-1,0)$ as before.

The corresponding modifications to \eqref{e:main-G} are {\em (i)} the
index $b_{1}$ on the left hand side is the number of south steps in
$\lambda $ on the $y$ axis including the extension to $(0,n)$, and
{\em (ii)} the row in $\nu (\lambda )$ corresponding to south steps in
$\lambda $ on the $y$ axis is also extended accordingly.
\end{remark}

\begin{figure}
\[
\begin{array}{ll@{\qquad }ll}
\begin{tikzpicture}[scale=.3,baseline=.6cm]
\draw [help lines] (0,0) grid (331/100,47/10);
\draw (0,47/10) -- (331/100,0);
\draw [very thick] (0,4) -- (0,3) -- (1,3) -- (1,1) -- (2,1) -- (2,0) -- (3,0);
\end{tikzpicture}
&
\begin{array}{l}
a(\lambda ) = 0,\, \dinv _{p} (\lambda ) = 4\\
\Gcal_{ 2011 / 0000}(X;q) =\\
\quad s_{4} + (q^2 + q)s_{31} + q^2s_{22} + q^3s_{211}
\end{array}
&
\begin{tikzpicture}[scale=.3,baseline=.6cm]
\draw[draw=none, fill=black!15]  (0,2) rectangle (1,3);
\draw[draw=none, fill=black!15]  (1,0) rectangle (2,1);
\draw[help lines] (0,0) grid (331/100,47/10);
\draw (0,47/10) -- (331/100,0);
\draw[very thick] (0,4) -- (0,2) -- (1,2) -- (1,0) -- (2,0) -- (2,0) -- (3,0);
\end{tikzpicture}
&
\begin{array}{l}
a(\lambda ) = 2,\, \dinv _{p} (\lambda ) = 2\\
\Gcal_{ 3021 / 1001}(X;q) = \\
\quad s_{4} + q\hspace{.3mm} s_{31} + q^2s_{22}
\end{array}\\[6ex]

\begin{tikzpicture}[scale=.3,baseline=.6cm]
\draw[draw=none, fill=black!15]  (1,0) rectangle (2,1);
\draw [help lines] (0,0) grid (331/100,47/10);
\draw (0,47/10) -- (331/100,0);
\draw [very thick]  (0,4) -- (0,3) -- (1,3) -- (1,0) -- (2,0) -- (2,0) -- (3,0);
\end{tikzpicture}
&
\begin{array}{l}
a(\lambda ) = 1,\, \dinv _{p} (\lambda ) = 2\\
\Gcal_{ 3011 / 0001}(X;q) =
s_{4} + q\hspace{.3mm} s_{31}
\end{array}
&
\begin{tikzpicture}[scale=.3,baseline=.6cm]
\draw[draw=none, fill=black!15]  (0,1) rectangle (1,3);
\draw[draw=none, fill=black!15]  (1,0) rectangle (2,1);
\draw[help lines] (0,0) grid (331/100,47/10);
\draw (0,47/10) -- (331/100,0);
\draw[very thick] (0,4) -- (0,1) -- (1,1) -- (1,0) -- (2,0) -- (2,0) -- (3,0);
\end{tikzpicture}
&
\begin{array}{l}
a(\lambda ) = 3,\, \dinv _{p} (\lambda ) = 1\\
\Gcal_{ 3031 / 2001}(X;q) =
s_{4} + q\hspace{.3mm} s_{31} 
\end{array}\\[6ex]

\begin{tikzpicture}[scale=.3,baseline=.6cm]
\draw[draw=none, fill=black!15]  (0,2) rectangle (1,3);
\draw [help lines] (0,0) grid (331/100,47/10);
\draw (0,47/10) -- (331/100,0);
\draw [very thick]  (0,4) -- (0,2) -- (1,2) -- (1,1) -- (2,1) -- (2,0) -- (3,0);
\end{tikzpicture}
&
\begin{array}{l}
a(\lambda ) = 1,\, \dinv _{p} (\lambda ) = 3\\
\Gcal_{ 2021 / 1000}(X;q) = \\
\quad s_{4} + (q^2 + q)s_{31} + q^2s_{22} + q^3s_{211} 
\end{array}
&
\begin{tikzpicture}[scale=.3,baseline=.6cm]
\draw[draw=none, fill=black!15]  (0,0) rectangle (1,3);
\draw[draw=none, fill=black!15]  (1,0) rectangle (2,1);
\draw[help lines] (0,0) grid (331/100,47/10);
\draw (0,47/10) -- (331/100,0);
\draw[very thick] (0,4) -- (0,0) -- (1,0) -- (1,0) -- (2,0) -- (2,0) -- (3,0);
\end{tikzpicture}
&
\begin{array}{l}
a(\lambda ) = 4,\, \dinv _{p} (\lambda ) = 0\\
\Gcal_{ 3041 / 3001}(X;q) = s_{4} 
\end{array}\\[6ex]

\begin{tikzpicture}[scale=.3,baseline=.6cm]
\draw[draw=none, fill=black!15]  (0,1) rectangle (1,3);
\draw [help lines] (0,0) grid (331/100,47/10);
\draw (0,47/10) -- (331/100,0);
\draw [very thick] (0,4) -- (0,1) -- (1,1) -- (1,1) -- (2,1) -- (2,0) -- (3,0);
\end{tikzpicture}
&
\begin{array}{l}
a(\lambda ) = 2,\, \dinv _{p} (\lambda ) = 1\\
\Gcal_{ 2031 / 2000}(X;q) = s_{4} + q\hspace{.3mm} s_{31} 
\end{array}
\end{array}
\]
\caption{\label{fig:main-G}
An illustration of Theorem~\ref{thm:main-G} as described in
Example~\ref{ex:figure6}.}
\end{figure}

\begin{example}\label{ex:figure6}
Figure \ref{fig:main-G} illustrates Theorem~\ref{thm:main-G}
for $s \approx 4.7$, $r \approx 3.31$.  We have
$(c_4, c_3,c_2,c_1) \approx ( .44,.86, .28,.70)$ and $\sigma =
(1,2,3,4) \mapsto (2,4,1,3)$.  The paths $\lambda$ are shown along
with the corresponding statistics and LLT polynomials
$\Gcal_{\nu(\lambda)}(X;q) =
\Gcal_{\sigma(\beta)/\sigma(\alpha)}(X;q)$.  The highest path $\delta$
is the one at the top left in the figure and $(b_1,b_2,b_3,b_4) = (1,2,1,0)$.  The
left side of \eqref{e:omega-main}, evaluated in $l = 4$ variables, is
then
\begin{multline}\label{eq:figure6-ex}
\omega (D_{1,2,1,0} \cdot 1)(x) = \Hcat _{(1,2,1,0)}(x)_{\pol} =\\
\sigmabold \big(\frac{ x_1x_2^2x_3(1 - q\, t\, x_{1} /x_3)(1 - q\, t\,
x_{2} /x_4)(1 - q\, t\, x_{1} /x_4)} {\prod_{1\le i < j \le 4}(1 - q\,
x_{i}/ x_{j})(1 - t\,x_{i}/ x_{j})} \big)_{\pol}\,.
\end{multline}
To see that
\eqref{e:omega-main} holds at $t = 0$, observe that the right hand side of~\eqref{eq:figure6-ex} becomes the Hall-Littlewood polynomial
$\Hbold_q(x_1x_2^2x_3)_{\pol} = q H_{2110}(x;q)$, which agrees with
the area 0 contribution $q^4 \Gcal_{2011 / 0000}(x;q^{-1})$.
\end{example}

\begin{defn}
\label{def:qt cat number} For $\bb \in \ZZ^l$, the \emph{generalized
$q,t$-Catalan number $C_\bb(q,t)$} is the coefficient of the single
row Schur function $s_{(|\bb|)}(X)$ in $\omega (D_\bb \cdot 1)$.
\end{defn}
When $\mathbf{b}= 1^l$, $C_\bb(q,t)$ is the $q,t$-Catalan number
introduced by Garsia and the second author \cite{GaHa96}.  The
generalized $q,t$-Catalan numbers have been studied in
\cite{GHRS}---see \S \ref{ss:Negut-conjecture}.

\begin{cor}
With $\mathbf{b} = (b_1,\dots, b_l)$, $r$, $s$, and $p = s/r$ as in
Theorem \ref{thm:main-G},
\begin{align}
C_\bb(q,t) = \sum _{\lambda } t^{a(\lambda )}
q^{\dinv _{p}(\lambda )},
\end{align}
where the sum is over lattice paths $\lambda $ from $(0,\lfloor
s\rfloor)$ to $(\lfloor r \rfloor,0)$ lying weakly below the line
through $(0,s)$ and $(r,0)$.
\end{cor}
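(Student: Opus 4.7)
The plan is to deduce this corollary directly from Theorem~\ref{thm:main-G} by extracting the coefficient of the single-row Schur function $s_{(|\bb|)}(X)$ on both sides. Applying $\omega$ to the identity \eqref{e:main-G} gives
\begin{equation*}
\omega(D_{\bb}\cdot 1) = \sum_{\lambda} t^{a(\lambda)} q^{\dinv_p(\lambda)}\, \Gcal_{\nu(\lambda)}(X;q^{-1}),
\end{equation*}
and by Definition~\ref{def:qt cat number}, $C_\bb(q,t)$ is the coefficient of $s_{(|\bb|)}(X)$ on the left hand side. So the task reduces to computing the coefficient of $s_{(|\bb|)}$ in $\Gcal_{\nu(\lambda)}(X;q^{-1})$ for each lattice path~$\lambda$.

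The key observation is that $\nu(\lambda)$ is a tuple of one-row skew shapes whose total number of boxes is $|\bb|$, since the rows correspond to the runs of south steps in $\delta$ and the total number of south steps equals $|\bb|$. Because $\Gcal_{\nu(\lambda)}(X;q^{-1})$ is homogeneous of degree $|\bb|$, the coefficient of $s_{(|\bb|)}$ can be extracted via the specialization $x_1=1$, $x_i=0$ for $i>1$, using the elementary fact that $s_\mu(1,0,0,\ldots)=\delta_{\mu,(|\bb|)}$ for partitions $\mu$ of $|\bb|$. From Definition~\ref{def:G-nu}, this specialization counts $T\in\SSYT(\nu(\lambda))$ weighted by $q^{-\inv(T)}$ in which every entry equals $1$.

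Since $\nu(\lambda)$ consists entirely of one-row (horizontal) shapes, the constant tableau $T\equiv 1$ is indeed semistandard; moreover it is the unique semistandard tableau with all entries equal to $1$. The attacking-inversion count for this $T$ is zero: an attacking pair $(a,b)$ with positive letters contributes to $\inv(T)$ only when $T(a)>T(b)$ strictly, which fails when $T(a)=T(b)=1$. Thus the specialization of $\Gcal_{\nu(\lambda)}(X;q^{-1})$ equals $1$, and the coefficient of $s_{(|\bb|)}$ in $\Gcal_{\nu(\lambda)}(X;q^{-1})$ is~$1$ for every $\lambda$. Taking this coefficient term by term in the displayed identity above yields the claimed formula.

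The argument has no real obstacle; the only subtle point is verifying that the constant-$1$ tableau contributes with no inversions (and not, as for the negative alphabet of Corollary~\ref{cor:omega-super}, a nontrivial power of $q$), which is immediate from the convention that two equal positive letters do not form an attacking inversion. No use of the Schiffmann algebra or of the infinite LLT series machinery is required beyond invoking Theorem~\ref{thm:main-G}.
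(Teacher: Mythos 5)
Your proof is correct, and it fills in an argument that the paper leaves implicit (the corollary is stated without a proof, as an immediate consequence of Theorem \ref{thm:main-G} and Definition \ref{def:qt cat number}). The key observation you supply—that the coefficient of $s_{(|\bb|)}$ in the degree-$|\bb|$ symmetric function $\Gcal_{\nu(\lambda)}(X;q^{-1})$ can be read off as the evaluation at $x_1=1$, $x_i=0$ for $i>1$, and that this evaluation equals $1$ because the unique all-ones tableau on a tuple of one-row shapes is semistandard with no attacking inversions—is exactly the missing detail, handled cleanly. One minor remark: you could instead try to argue via the $q=1$ specialization $\Gcal_{\nu(\lambda)}(X;1)=h_{\mu}$ together with Schur positivity, but that only bounds the coefficient by a power of $q$ rather than pinning it to $1$; your direct evaluation is the cleaner route and is the one that actually closes the argument.
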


\section{Relation to previous shuffle theorems}
\setcounter{subsection}{1}

Theorem \ref{thm:main-G} is formulated a little differently than the
classical and $(km,kn)$ shuffle theorems in \cite{BeGaSeXi16,
HaHaLoReUl05}, {although these also have an algebraic side and a
combinatorial side resembling ours.}  We now explain how to recover
them from our version by transforming each side of \eqref{e:main-G}
into its counterpart in the $(km,kn)$ and classical shuffle
conjectures.

For the $(km,kn)$ shuffle conjecture, we take the line {$y+px = s$} in
\eqref{e:line} to be a perturbation of the line from $(0,kn)$ to
$(km,0)$, with $s = kn$ and $r$ slightly larger than $km$.  Our
perturbed line has the same lattice points and paths under it as the
line from $(0,kn)$ to $(km,0)$, but it now has slope $-p$, where $p =
n/m-\epsilon $ for a small $\epsilon >0$.  The classical shuffle
conjectures in \cite{HaHaLoReUl05} are the special cases of the
$(km,kn)$ conjecture with $n=1$.  For these we perturb the line from
$(0,k)$ to $(km,0)$ in the same way.  Note that for our chosen line we
have $l=km+1$, and every lattice path $\lambda $ under it has
$b_{l}=0$ south steps at $x=km$.

The classical shuffle conjecture was formulated in \cite[Conjecture
6.2.2]{HaHaLoReUl05} as the identity
\begin{equation}\label{e:HHLRU-RHS}
\nabla^{m} e_{k} = \sum _{\lambda } 
\sum _{P\in \SSYT {((\lambda+(1^k))/\lambda)}}
t^{a(\lambda )} q^{\dinv _{m}(P)} x^{P}\,,
\end{equation}
where the sum is over lattice paths $\lambda$ below the bounding line,
and {$\dinv_m(P)$ is a statistic defined in \cite{HaHaLoReUl05},
attached to each labelling $P$ of the south steps in $\lambda $ by
non-negative integers} strictly increasing from south to north along
each vertical run.  The left hand side of \eqref{e:HHLRU-RHS} is
$e_{k}[-M X^{m,1}] \cdot 1$ by Corollary \ref{cor:nabla-ek}.  This
agrees with the left hand side $D_{b_{1},\ldots,b_{l}}\cdot 1$
of \eqref{e:main-G} by Corollary \ref{cor:ek-vs-Db}.

It was noted and used in \cite{HaHaLoReUl05} that the combinatorial
side of \eqref{e:HHLRU-RHS} can be phrased in terms of LLT
polynomials, but to explicitly match with our formulation requires
that we transform the right hand side of ~\eqref{e:main-G} as follows.
For the given $\nu(\lambda)=\sigma(\beta/\alpha)$, apply Proposition
\ref{prop:omega-G} to replace $\omega \Gcal_{\nu (\lambda)}(X;q^{-1})$
with $q^{-I(\nu (\lambda )^{R})} \Gcal_{\nu (\lambda)^{R}}(X;q)$.
Then writing out $\Gcal_{\nu(\lambda)^R}(X;q)$ term by term with
tableaux on the tuple $\nu(\lambda )^{R}$ of one-column diagrams using
Definition~\ref{def:G-nu} gives
\begin{equation}\label{e:main-G-cor}
D_{b_{1},\ldots,b_{l}} \cdot 1 = \sum _{\lambda } \sum _{T\in \SSYT
(\nu (\lambda )^{R})} t^{a(\lambda )} q^{\dinv _{p}(\lambda ) -I(\nu
(\lambda )^{R})+\inv(T)} x^{T}\,,
\end{equation}
where $\inv(T)$ is defined in \S\ref{ss:G-nu} to be the number of
attacking inversions.

By the construction, boxes in each column of $\nu (\lambda )^{R}$,
from top to bottom, correspond to south steps $u$ in a vertical run in
$\lambda $, from north to south.  Semistandard tableaux $T\in \SSYT
(\nu (\lambda )^{R})$ therefore biject with labellings $P_T\colon
\{\text{south steps in $\lambda $} \}\rightarrow \NN $ such that $P_T$
is strictly increasing from south to north on each vertical run in
$\lambda $; namely, $P_T\in\SSYT((\lambda+(1^k))/\lambda)$.
Changing~\eqref{e:main-G-cor} to instead sum over labellings, we can
match the right hand sides of ~\eqref{e:main-G-cor} and
\eqref{e:HHLRU-RHS} by showing that for $p=1/m-\epsilon$,
\begin{equation}\label{e:dinv-m}
\dinv _{m}(P_T) = \dinv _{p}(\lambda ) - I(\nu (\lambda )^{R}) +\inv(T).
\end{equation}

For any super tableau $T$, \cite[Corollary 6.4.2]{HaHaLoReUl05}
implies that $\dinv _{m}(P_T) = e_{\lambda }+\inv(T)$ for an offset
$e_{\lambda }$ not depending on $T$.  For the tableau $T_0$ with all
entries $\bar 1$, \cite[Lemma 6.3.3]{HaHaLoReUl05} gives that
$\dinv_m(P_{T_0})= b_m(\lambda)$, where we note that $b_m(\lambda)$
defined in \cite[(100)]{HaHaLoReUl05} is simply $\dinv_p(\lambda)$
with $p=1/m-\epsilon$.  Therefore, $e_\lambda = \dinv
_{m}(P_{T_0})-\inv(T_0) = \dinv_p(\lambda) - I(\nu(\lambda)^R)$
by~\eqref{e:inv2Tnot}.

In fact, there is a direct correspondence between
the combinatorics of $\dinv_{m}(P)$ for paths, as defined in 
\cite{HaHaLoReUl05}, and that of triples in negative tableaux on a tuple 
of one-row shapes, as considered in \S \ref{ss:LLT-tableaux}.

\begin{prop}\label{prop:dinv-m=h}
Let $\lambda$ be a lattice path from $(0,k)$ to $(km,0)$, lying 
weakly below the bounding line $y + p\, x = k$ with $p = 1/m-\epsilon$.
Let $\alpha$, $\beta$, $\sigma$ be the LLT data associated to
$\lambda$ for this $p$.
There is a weight-preserving bijection  from labellings 
$P\in\SSYT((\lambda+(1^k))/\lambda)$ to negative
tableaux $ S\in \SSYT_{-} (\beta/\alpha)$ such that
\begin{equation}\label{e:dinv-m=h}
\dinv_m(P)=h_\sigma(S).
\end{equation}
\end{prop}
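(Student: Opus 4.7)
The plan is to construct the bijection as a three-step composition suggested by the proof of Theorem~\ref{thm:main-G}, then deduce the $\dinv$ identity by chaining four results already established above.

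Given $P\in\SSYT((\lambda+(1^k))/\lambda)$, I first form a semistandard tableau $T_P\in\SSYT(\nu(\lambda)^R)$ via the correspondence between south steps of $\lambda$ and boxes of $\nu(\lambda)^R=\sigma(\beta/\alpha)^R$ used in the derivation of \eqref{e:main-G-cor}: columns of $\nu(\lambda)^R$ correspond to vertical runs of $\lambda$, so the strict-increase-along-columns condition on $T_P$ matches the strict-increase-south-to-north condition on $P$. Next, I apply the reflection-and-negation map $T\mapsto T^R$ from the proof of Proposition~\ref{prop:omega-G} to obtain a negative tableau $T_P^{-}\in\SSYT_{-}(\nu(\lambda))$. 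Finally, I precompose by $\sigma$ to unscramble the rows, setting $S=T_P^{-}\circ\sigma\in\SSYT_{-}(\beta/\alpha)$. Each stage is visibly a weight-preserving bijection, so the composite $P\mapsto S$ is the desired map.

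For the equality $\dinv_m(P)=h_\sigma(S)$, the argument immediately preceding the proposition yields
\begin{equation*}
\dinv_m(P)=\dinv_p(\lambda)-I(\nu(\lambda)^R)+\inv(T_P).
\end{equation*}
Proposition~\ref{prop:omega-G} gives $\inv(T_P)=I(\nu(\lambda))-\inv(T_P^{-})$, and the positioning in the definition of $\nu^R$ preserves contents, so $I(\nu(\lambda))=I(\nu(\lambda)^R)$; substituting leaves $\dinv_m(P)=\dinv_p(\lambda)-\inv(T_P^{-})$. Proposition~\ref{prop:h=dinv} replaces $\dinv_p(\lambda)$ by $h_\sigma(\beta/\alpha)$, and Lemma~\ref{l:triples2dinv}, applied to $T_P^{-}\in\SSYT_{-}(\sigma(\beta/\alpha))$, identifies $h_\sigma(\beta/\alpha)-\inv(T_P^{-})$ with $h_\sigma(S)$. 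Chaining these gives the result.

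The only nontrivial bookkeeping is in pinning down the correspondence between south steps at $x=i-1$ and boxes in the row of $\beta/\alpha$ indexed by $i$: using the conventions of Definition~\ref{def:LLT-data} and the transformation $\lambda\mapsto\lambda'$ from the proof of Proposition~\ref{prop:h=dinv}, the $j$-th south step counted from the north corresponds to the $j$-th box counted from the right in that row, so south-to-north increase on $P$ translates into left-to-right increase on $S$. Once this is fixed, the main potential pitfall is keeping straight the roles of $\nu(\lambda)$, $\nu(\lambda)^R$, $\beta/\alpha$, and $\sigma(\beta/\alpha)$, but the shapes and statistics match up by construction and no further creative input is required.
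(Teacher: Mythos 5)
Your proof is correct and follows essentially the same route as the paper's: construct the bijection as the composite $P\mapsto T_P\mapsto T_P^{R}\mapsto S=T_P^{R}\circ\sigma$, then chain \eqref{e:dinv-m}, Proposition~\ref{prop:omega-G}, Proposition~\ref{prop:h=dinv}, and Lemma~\ref{l:triples2dinv}. The one thing you spell out that the paper glosses over is the equality $I(\nu(\lambda))=I(\nu(\lambda)^R)$, which you justify correctly by noting the rotation preserves contents and component indices; the paper's line ``$\inv(T)-I(\nu(\lambda)^R)=-\inv(T^R)$'' uses this tacitly. A tiny slip appears only in your final bookkeeping aside: with the conventions of Definition~\ref{def:LLT-data} and the ordering $\overline{1}>\overline{2}>\cdots$ used in Proposition~\ref{prop:omega-G} (cf.\ Figure~\ref{fig:PF}), the $j$-th south step from the north corresponds to the $j$-th box from the \emph{left}, not the right; this does not affect the argument, since the three maps are bijections regardless and the $\dinv$ identity is established entirely by the cited results.
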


\begin{proof}
labelling $P=P_T\in \SSYT((\lambda+(1^k))/\lambda)$ corresponds naturally
to a semistandard tableau $T\in\SSYT(\nu(\lambda)^R)$. Their statistics
are related by~\eqref{e:dinv-m}, into which we can
substitute \(\dinv_p(\lambda) = h_\sigma(\beta/\alpha)\) by
Proposition~\ref{prop:h=dinv}. 
The bijection $T\mapsto T^{R}$ in the proof of
Proposition~\ref{prop:omega-G} satisfies $\inv(T)-I(\nu(\lambda)^R) =
-\inv(T^R)$.
Hence, $\dinv _{m}(P_T) = h_\sigma(\beta/\alpha)-\inv(T^R)$. 
To complete the bijection, take
$S=T^R\circ\sigma$.
Then \(h_\sigma(\beta/\alpha)-\inv(T^R) = h_\sigma(S)\) by
Lemma~\ref{l:triples2dinv}, proving \eqref{e:dinv-m=h}.
\end{proof}

See Figure~\ref{fig:PF} for an example with $m=1$ and $p=1-\epsilon$.
Note that these values give $\sigma = w_0$ in the LLT data.

\begin{figure}
\[
\begin{tikzpicture}[scale = 0.5,baseline=1.75cm]
\draw [black!25] (0,0) grid (7,7);
\draw [black!25] (0,7)--(7,0);
\draw [thick] (0,7)--(0,4)--(1,4)--(1,3)--(3,3)--(3,1)--(6,1)--(6,0)--(7,0);
\node [left] at (0,6.5) {$5$};
\node [left] at (0,5.5) {$3$};
\node [left] at (0,4.5) {$2$};
\node [left] at (1,3.5) {$6$};
\node [left] at (3,2.5) {$3$};
\node [left] at (3,1.5) {$1$};
\node [left] at (6,0.5) {$4$};
\node [left] at (-1,3.5) {$P = $};
\end{tikzpicture}
\qquad 
\begin{tikzpicture}[scale=.45,baseline=2.25cm]
\draw (0,0) grid (1,1);
\node [left] at (0,.5) {$-\infty $};
\node at (.5,.5) {$\overline{4}$};
\node [right] at (1,.5) {$\infty $};

\draw (1,1.5) -- (1,2.5);
\node [left] at (1,2) {$-\infty $};
\node [right] at (1,2) {$\infty $};

\draw (2,3) -- (2,4);
\node [left] at (2,3.5) {$-\infty $};
\node [right] at (2,3.5) {$\infty $};

\draw [yshift=4.5cm] (1,0) grid (3,1);
\node [left] at (1,5) {$-\infty $};
\node at (1.5,5) {$\overline{3}$};
\node at (2.5,5) {$\overline{1}$};
\node [right] at (3,5) {$\infty $};

\draw (2,6) -- (2,7);
\node [left] at (2,6.5) {$-\infty $};
\node [right] at (2,6.5) {$\infty $};

\draw [yshift=7.5cm] (2,0) grid (3,1);
\node [left] at (2,8) {$-\infty $};
\node at (2.5,8) {$\overline{6}$};
\node [right] at (3,8) {$\infty $};

\draw [yshift=9cm] (0,0) grid (3,1);
\node [left] at (0,9.5) {$-\infty $};
\node at (.5,9.5) {$\overline{5}$};
\node at (1.5,9.5) {$\overline{3}$};
\node at (2.5,9.5) {$\overline{2}$};
\node [right] at (3,9.5) {$\infty $};

\node [left] at (-2,5) {$S = $};
\end{tikzpicture}
\]
\caption{\label{fig:PF}
Example of the bijection $P=P_T\leftrightarrow T\leftrightarrow
T^R\leftrightarrow S=T^R\circ \sigma$ in
Proposition~\ref{prop:dinv-m=h}, with $m=1$, $p=1-\epsilon$, $\sigma =
w_0$.  Letters in $S$ are ordered $\overline{1} > \overline{2} >
\cdots$.  We see $\dinv_1(P) = h_{w_0}(S) = 6$.}
\end{figure}

Next we turn to the
non-compositional $(km,kn)$ shuffle conjecture
from~\cite{BeGaSeXi16}.  Its symmetric function side is precisely
the Schiffmann algebra operator expression that we denote here by $e_{k}[-M
X^{m,n}]\cdot 1$.  By Corollary \ref{cor:ek-vs-Db}, this agrees with
the left hand side $D_{b_{1},\ldots,b_{l}}\cdot 1$ of
\eqref{e:main-G}.

The combinatorial
side of the  $(km,kn)$ shuffle conjecture can be written as in
\cite[\S 7]{BeGaSeXi16}, using notation defined there, as
\begin{equation}\label{e:BGSX-RHS}
\sum _{u} \sum _{\pi \in \operatorname{Park}(u)}
t^{\operatorname{area}(u)}\, q^{\dinv (u) + \operatorname{tdinv}(\pi ) -
\operatorname{maxtdinv(u )}} F_{\operatorname{ides}(\pi )}(x).
\end{equation}
Here $u$ encodes a north-east lattice path lying above the line from
$(0,0)$ to $(km,kn)$, $\operatorname{Park}(u)$ encodes the set of
standard Young tableaux on a tuple of columns corresponding to
vertical runs in the path encoded by $u$, and $F_{\gamma }(x)$ is a
Gessel fundamental quasi-symmetric function.

To make $u$ correspond to a lattice path $\lambda $ under the line
from $(0,kn)$ to $(km,0)$, as in \eqref{e:main-G-cor}, we need to flip
the picture over, replacing each entry $\pi (j)$ with $kn+1-\pi (j)$
so the resulting standard tableau on $\nu (\lambda )^{R}$ has columns
increasing upwards, as it should, instead of decreasing.  Using
\cite[Definition 7.1]{BeGaSeXi16} and taking account the modification
to $\pi $, we can translate notation in \eqref{e:BGSX-RHS} as follows:
$\operatorname{area}(u) = a(\lambda )$, $\operatorname{tdinv(\pi )} =
i(\pi )$, $\operatorname{maxtdinv(\pi )} = I(\nu (\lambda )^{R})$, and
$\dinv (u) = \dinv _{p}(\lambda )$, where $p = n/m - \epsilon $.

Finally, the definition of $\operatorname{ides}(\pi )$ becomes the
descent set of $\pi $ relative to the reading order on $\nu
(\lambda )^{R}$.  This implies that expanding
$F_{\operatorname{ides}(\pi )}(x)$ into monomials gives a sum with
semistandard tableaux $T$ in place of standard tableaux $\pi $ and
$x^{T}$ in place of $F_{\operatorname{ides}(\pi )}(x)$.  After these
substitutions, \eqref{e:BGSX-RHS} coincides with the right hand side
of \eqref{e:main-G-cor}.

\section{A positivity conjecture}
\label{s:positivity}

\subsection{}
\label{ss:positivity}

Theorem \ref{thm:main-G}, Corollary \ref{cor:Db-Hb} and
\cite[Proposition 5.3.1]{HaHaLoReUl05} imply that the
symmetric function
\begin{equation}\label{e:Db-again}
\omega (D_{\bb }\cdot 1) = \Hbold _{q,t}\left(\frac{x^{\bb }}{\prod
_{i} (1-q\, t\, x_{i}/x_{i+1})} \right)_{\pol }
\end{equation}
is $q,t$ Schur positive when $b_{i}$ is the number of south steps
along $x = i-1$ on the highest lattice path below a line with
endpoints on the positive $x$ and $y$ axes.  Computational evidence
leads us to conjecture that \eqref{e:Db-again} is $q,t$ Schur positive
under a more general geometric condition on $\bb $.

Let $C$ be a convex curve (meaning that the region above $C$ is
convex) in the first quadrant with endpoints $(r,0)$ and $(0,s)$ on
the positive $x$ and $y$ axes.  Let $\delta $ be the highest lattice
path from $(0,\lfloor s \rfloor)$ to $(\lfloor r \rfloor,0)$ weakly
below $C$. Let $b_{i}$ be the number of south steps in $\delta $ along
$x = i-1$ for $i = 1,\ldots,l$, where $l = \lfloor r \rfloor + 1$.
Algebraically, this means that there are real numbers $s_{0}\geq
s_{1}\geq \cdots \geq s_{l}=0$ with weakly decreasing differences
$s_{i-1}-s_{i}\geq s_{i}-s_{i+1}$, such that $b_{i} = \lfloor s_{i-1}
\rfloor -\lfloor s_{i} \rfloor$.

Note that if $\delta $ is the highest path strictly below a convex
curve $C'$, then it is also the highest lattice path weakly below a
slightly lower curve $C$, and vice versa, so it doesn't matter whether
we use `weakly below' or `strictly below' to formulate the condition
on $\delta $.

\begin{conj}\label{conj:positivity}
When $b_{i}$ is the number of south steps along $x = i-1$ in the
highest lattice path below a convex curve, as above, the symmetric
function in \eqref{e:Db-again} is a linear combination of Schur
functions with coefficients in $\NN [q,t]$.
\end{conj}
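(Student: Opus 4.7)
The plan is to generalize the shuffle theorem (Theorem \ref{thm:main-G}) from a single line to convex curves, which would imply the conjecture via the $q$-Schur positivity of LLT polynomials \cite[Proposition 5.3.1]{HaHaLoReUl05}. Since $\delta$ depends on $C$ only through the set of lattice points weakly below $C$, one may first replace $C$ with the piecewise linear convex upper envelope joining consecutive outer corners of $\delta$; perturbing slightly, we may assume $C$ consists of $d$ linear segments with irrational slopes $-p_{1}> \cdots > -p_{d}$ and endpoints $P_{0} = (0,s), P_{1}, \ldots, P_{d} = (\lfloor r \rfloor, 0)$.

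The target would be a formula
\begin{equation*}
\omega (D_{\bb } \cdot 1) = \sum _{\lambda } t^{a(\lambda )}\, q^{\dinv _{C}(\lambda )}\, \omega \bigl(\Gcal _{\nu (\lambda )}(X; q^{-1})\bigr),
\end{equation*}
with $\lambda $ ranging over lattice paths weakly below $C$, $\dinv _{C}(\lambda )$ counting hooks in the Young diagram below $\lambda $ whose arm and leg are $p_{j}$-balanced (in the sense of Definition \ref{def:dinv}) for \emph{some} tangent slope $p_{j}$ of $C$, and $\nu (\lambda )$ a tuple of one-row skew shapes recording south runs in the order induced by the piecewise linear function $y + p_{j(x)}\, x$, where $p_{j(x)}$ is the slope of $C$ at $x$. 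By Corollary \ref{cor:Db-Hb}, it suffices to prove a corresponding $\GL _{l}$-character series identity
\begin{equation*}
\Hcat _{\bb }(x) = \sum _{\aA \geq 0} t^{|\aA |}\, \Lcal ^{\sigma }_{\beta (\aA )/\alpha (\aA )}(x; q),
\end{equation*}
in which $\sigma = \sigma _{C}$ sorts the gap sequence $c_{i} = \fp{s(i-1)}$ determined by the piecewise linear equation $y = s(x)$ of $C$.

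The approach to the series identity is to iterate the proof of Theorem \ref{thm:main-L}, applying the Cauchy identity (Theorem \ref{thm:Cauchy}) once per linear piece of $C$.  Although $(c_{1}, \ldots, c_{l})$ is no longer the fractional-parts sequence of a single arithmetic progression, it is piecewise arithmetic, so the factorization tools of Lemma \ref{lem:factorization} together with the winding-permutation identities of Proposition \ref{prop:winding} remain available on each piece.  The main obstacle lies at the breakpoints $P_{j}$: because the descent indicator of $\sigma _{C}$ has a discontinuity in its arithmetic pattern wherever the slope of $C$ changes, the single clean factorization used in \eqref{e:factor-F-2} must be replaced by a gluing across $d$ pieces, and it is not transparent that this gluing yields a Laurent polynomial representative of $D_{\bb }$ in the shuffle algebra $S$ rather than a rational function with uncancelled denominators coming from the breakpoints.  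Resolving this---perhaps by inserting explicit corner correction terms, or by reinterpreting the breakpoints through the ribbon Schur functions $s_{R}[-M X^{m,n}]$ appearing in Negut's analysis of distinguished shuffle-algebra elements \cite[\S 6]{Negut14}---would simultaneously produce the combinatorial formula above and establish Conjecture \ref{conj:positivity}.
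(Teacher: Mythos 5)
This statement is a \emph{conjecture} in the paper; no proof is given, and the authors offer only circumstantial evidence (the $q=1$ specialization \eqref{e:Db-q=1-bis} and the already-proven line case of Theorem~\ref{thm:main-G}).  So there is nothing in the paper to compare your attempt against, and the right benchmark is simply whether your argument closes the gap.  It does not, and you say so yourself: you explicitly flag the breakpoint gluing as unresolved.  That is the honest assessment, and it matches the status in the paper.

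A few remarks on the plan itself.  The reduction from a general convex curve to its piecewise-linear upper envelope of outer corners of $\delta$ is fine, since $\bb$ depends only on $\delta$.  The proposed target formula is a reasonable guess, and if any LLT-type formula of that shape held, positivity would indeed follow from \cite[Proposition 5.3.1]{HaHaLoReUl05}.  But the obstruction you identify is genuine and is precisely what prevents a routine iteration of the paper's argument.  Theorem~\ref{thm:main-L} depends in an essential way on $\sigma$ being a single winding permutation: Proposition~\ref{prop:winding} peels off the top row of $F^{\sigma^{-1}}_{\bullet}$ exactly because the descent indicator $\eta$ matches the arithmetic of the $c_i$ via \eqref{e:b-c-sigma}, and this is what lets \eqref{e:factor-F-2} combine with the Cauchy identity in $l-1$ variables.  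For a piecewise-arithmetic sequence $c_i$, the head/tail structure at the index where the slope changes fails the hypotheses of Proposition~\ref{prop:winding}, so one cannot factor through to a smaller Cauchy identity; the natural replacement would require a Cauchy-type identity for a two-block (or $d$-block) non-symmetric Hall--Littlewood family, which is not in the paper and is not an obvious consequence of Theorem~\ref{thm:Cauchy}.  In addition, your proposed $\dinv_C(\lambda)$ (``$p_j$-balanced for some tangent slope $p_j$'') would need care to avoid multiple counting when a hook is balanced for more than one $p_j$, and your $\nu(\lambda)$ is under-specified since it is not determined just by the tangent slope $p_{j(x)}$ but by the actual heights $s(x)$, which for a convex curve are not a single arithmetic progression.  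In short: the sketch is a plausible research direction, but it is a program, not a proof, and the step it leaves open is exactly why the statement remains a conjecture.
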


At $q=1$, the $q$-Kostka coefficients reduce to $K_{\lambda ,\mu }(1)
= K_{\lambda ,\mu } = \langle s_{\lambda },h_{\mu } \rangle$.  Hence,
the Hall-Littlewood symmetrization operator reduces to $\Hbold
_{q}(x^{\mu })_{\pol }|_{q=1} = h_{\mu }(x)$ if $\mu _{i}\geq 0$ for
all $i$, and otherwise $\Hbold _{q}(x^{\mu })_{\pol } = 0$.  At $q=1$,
the factors containing $t$ in \eqref{e:Hqt} cancel, so $\Hbold _{q,t}$
reduces to the same thing as $\Hbold _{q}$.

It follows that \eqref{e:Db-again} specializes at $q = 1$ to
\begin{equation}\label{e:Db-q=1}
\omega (D_{\bb }\cdot 1)|_{q=1} = \sum _{a_{1},\ldots,a_{l-1}\geq 0}
t^{|\aA |} h_{\bb
+(a_{1},a_{2}-a_{1},\ldots,a_{l-1}-a_{l-2},-a_{l-1})},
\end{equation}
with the convention that $h_{\mu } = 0$ if $\mu _{i}<0$ for any $i$.
As in Theorem \ref{thm:main-G}, the index $\bb
+(a_{1},a_{2}-a_{1},\ldots,a_{l-1}-a_{l-2},-a_{l-1})$ is non-negative
precisely when it is the sequence $b(\lambda )$ of lengths of south
runs in a lattice path $\lambda $ lying below the path $\delta $ whose
south runs are given by $\bb $.  Here $a_{i}$ is the number of lattice
squares in column $i$ between $\lambda $ and $\delta $, so $|\aA |$ is
the area $a(\lambda )$ enclosed between the two paths.  This gives a
combinatorial expression
\begin{equation}\label{e:Db-q=1-bis}
\omega (D_{\bb }\cdot 1)|_{q=1} = \sum _{\lambda } t^{a(\lambda )}
h_{b(\lambda )},
\end{equation}
for \eqref{e:Db-again} at $q=1$, which is positive in terms of
complete homogeneous symmetric functions $h_{\lambda }$, hence $t$
Schur positive.  We may conjecture that when the hypothesis of
Conjecture \ref{conj:positivity} holds, $\omega (D_{\bb }\cdot 1)$ is
given by some Schur positive combinatorial $q$-analog of
\eqref{e:Db-q=1-bis}, but it remains an open problem to formulate such
a conjecture precisely.

Of course, \eqref{e:Db-q=1-bis} cannot be considered evidence for
Conjecture \ref{conj:positivity}, since \eqref{e:Db-q=1-bis} holds for
any $\bb \geq 0$, whether the convexity hypothesis is satisfied or
not.

\subsection{Relation to previous conjectures}
\label{ss:Negut-conjecture}

The generalized $q,t$-Catalan numbers $C_\bb(q,t) = \langle
s_{(|\bb|)}(X), \omega (D_\bb \cdot 1) \rangle$ from Definition
\ref{def:qt cat number} coincide with the functions denoted
$F(b_2,\dots, b_l)$ in \cite{GHRS}, where several equivalent
expressions for them were obtained.  To see that $C_{\bb }(q,t) =
F(b_{2},\ldots,b_{l})$, one can compare the formula in
Proposition~\ref{prop:C(q,t)}, below, with the equation just before
(2.6) in \cite{GHRS}.  It was also shown in \cite{GHRS} that this
quantity does not depend on $b_{1}$, hence the notation $F(b_2,\dots,
b_l)$.

Conjecture \ref{conj:positivity} implies a conjecture of Negut,
announced in \cite{GHRS}, which asserts that $C_{\bb }(q,t) \in \NN
[q,t]$ when $b_{2}\geq \cdots \geq b_{l}$.  Conjecture
\ref{conj:positivity} is stronger than Negut's conjecture in two ways:
the weight $\bb $ is generalized from a partition to the highest path
below a convex curve, and the coefficient of $s_{(|\bb|)}(X)$ in
$\omega (D_\bb \cdot 1)$ is generalized to the coefficient of any
Schur function.

\begin{prop}
\label{prop:C(q,t)}
The generalized $q,t$-Catalan number $C_\bb(q,t)$ has the following
description as a series coefficient:
\begin{align}
\label{e:for GHRS}
C_\bb(q,t) =
\langle z^{-\bb} \rangle
\,
\prod_{i=1}^l \frac{1}{1-z_i^{-1}} \,
\prod_{i=1}^{l-1}\frac{1}{1-q\, t\, z_{i}/z_{i+1}}
 \, \prod _{i<j} \frac{(1-z_{i}/z_{j})(1-q\, t\,
z_{i}/z_{j})}{(1-q \, z_{i}/z_{j})(1-t \, z_{i}/z_{j})} .
\end{align}
\end{prop}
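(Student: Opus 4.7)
The plan is to extract the Schur coefficient $C_\bb(q,t) = \langle s_{(n)}, \omega(D_\bb \cdot 1)\rangle$ with $n = |\bb|$ directly from the $\GL_l$-character series formula for $\Hcat_\bb(x)$, using the Weyl symmetrization identity \eqref{e:Cauchy-Weyl2} from Lemma~\ref{lem:Cauchy-Weyl}. Since $s_{(n)}$ has length one, Corollary~\ref{cor:Db-Hb} gives $C_\bb(q,t) = \langle \chi_{(n)} \rangle\, \Hcat_\bb(x)$ with no loss of information. Using the first formula in Definition~\ref{def:negut-catalanimal}, I would write $\Hcat_\bb(x) = \sigmabold(\psi)$ where
\[
\psi = \frac{x^\bb}{\prod_{i=1}^{l-1}(1-qt\, x_i/x_{i+1})} \cdot \frac{\prod_{i<j}(1-qt\, x_i/x_j)}{\prod_{i<j}(1-q\, x_i/x_j)(1-t\, x_i/x_j)},
\]
interpreted coefficient-wise as a formal series in $q,t$ with Laurent polynomial coefficients in $x$, and then apply \eqref{e:Cauchy-Weyl2} with $\lambda = (n)$ to each such coefficient.

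Using $\overline{\chi_{(n)}} = h_n(x_1^{-1},\ldots,x_l^{-1})$, the identity yields
\[
C_\bb(q,t) = \langle x^0 \rangle\, h_n(x^{-1})\, \psi(x)\, \prod_{i<j}(1-x_i/x_j).
\]
The next step is a degree-counting argument: every factor appearing in $\psi(x) \prod_{i<j}(1-x_i/x_j)$ other than the monomial $x^\bb$ expands into Laurent monomials of total $x$-degree $0$. Since $x^\bb$ contributes degree $n$, the only part of $h_n(x^{-1}) = \sum_{\mu \in \NN^l,\,|\mu|=n} x^{-\mu}$ contributing to the constant term in $x$ is the degree $-n$ component. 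I can therefore replace $h_n(x^{-1})$ by the full geometric series $\sum_{k\geq 0} h_k(x^{-1}) = \prod_i 1/(1-x_i^{-1})$ without altering $\langle x^0\rangle$. Factoring $x^\bb$ out to convert $\langle x^0 \rangle$ into $\langle x^{-\bb} \rangle$, and relabeling $x \to z$, then produces exactly the right hand side of \eqref{e:for GHRS}.

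I do not anticipate a significant obstacle; the argument is essentially an unwinding of definitions through the constant-term identity \eqref{e:Cauchy-Weyl2}. The step most requiring care is the passage from $h_n(x^{-1})$ to $\prod_i 1/(1-x_i^{-1})$ inside the constant-term operator, that is, justifying the interchange of formal sums when extracting $\langle x^0 \rangle$. This is routine because, for each fixed bidegree $q^a t^b$, the corresponding coefficient of $\psi \prod_{i<j}(1-x_i/x_j)$ is a Laurent polynomial in $x$ of bounded degree, so only finitely many terms of $\sum_k h_k(x^{-1})$ can contribute to $\langle x^0\rangle$, and the degree constraint selects precisely the $k=n$ piece.
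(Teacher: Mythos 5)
Your argument is correct and is essentially the same unwinding as the paper's: the paper starts from \eqref{e:Dz-on-1} and specializes $X = 1$ (which kills every $s_\lambda$ with $\ell(\lambda) > 1$ and so extracts the coefficient of $s_{(|\bb|)}$, turning $\Omega[\overline{Z}X]$ into $\prod_i 1/(1-z_i^{-1})$), while you reach the same constant-term expression via Corollary~\ref{cor:Db-Hb} and \eqref{e:Cauchy-Weyl2}, replacing $\overline{\chi_{(n)}} = h_n(x^{-1})$ by the full series $\sum_k h_k(x^{-1})$ via the degree count. These are two descriptions of the identical computation, so the proof stands.
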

\begin{proof}
From \eqref{e:Dz-on-1} we have
\begin{align*}
\label{e:Dz-on-1 v2}
\omega (D_\bb \cdot 1)
= \langle z^{0} \rangle \,
 \frac{z^{\bb }}{\prod_{i=1}^{l-1} (1-q\, t\, z_{i}/z_{i+1})} \,
 \prod _{i<j} \frac{1-q\, t\,
z_{i}/z_{j}}{(1-q \, z_{i}/z_{j})(1-t \, z_{i}/z_{j})} \,
\Omega[\overline{Z} X] \, \prod _{i<j}(1-z_{i}/z_{j}).
\end{align*}
Specializing  $X = 1$ gives the result.
\end{proof}

%% For a self-contained document, replace the \bibliographystyle and
%% \bibliography commands with the contents of the .bbl file

\bibliographystyle{hamsplain} \bibliography{shuffle}

\end{document}